\documentclass{article}
\oddsidemargin -0.2cm \textwidth 16.8cm \topmargin -2cm
\headheight 0.0cm \textheight 24cm

% Private macros here (check that there is no clash with the style)
% figure packages
\usepackage{amssymb}
\usepackage{amsmath}
\usepackage{amsthm}
\usepackage{dsfont}
\usepackage{color}
\usepackage{hyperref}
\usepackage{fullpage} % for smaller margins
\usepackage{xcolor}
\usepackage{empheq}
\usepackage{caption}
\usepackage{subcaption} % for subfigures with caption
\usepackage{graphics} %% add this and next lines if pictures should be in esp format
\usepackage{epsfig} %For pictures: screened artwork sLemma \ref{}hould be set up with an 85 or 100 line screen
\usepackage{graphicx}
\usepackage{epstopdf}%This is to transfer .eps figure to .pdf figure; please compile your paper using PDFLeTex or PDFTeXify.
\usepackage{mathabx}
\usepackage{capt-of} % figures positions
\usepackage{multirow}
\usepackage{hhline}

% appendix package
\usepackage{hyperref} % loaded before cleveref
\usepackage[capitalise]{cleveref}
\usepackage{appendix}
% color packages
\usepackage{color}
\definecolor{strcolor}{rgb}{0.6, 0.2, 0.6}
\definecolor{commentcolor}{rgb}{0.3125, 0.5, 0.3125}
\definecolor{keycol}{rgb}{0, 0, 1}

% revision

\usepackage{bbm}

% Code package

\def \to{\rightarrow}

\def \N {\mathbb{N}}

\def \lv{\left\vert}
\def \rv{\right\vert}
\def \lc{\left(}
\def \rc{\right)}

\def \abs{|}

\usepackage{listings}
\lstset{
	emph={ROVar, ROUn, ROVarDR, ROExpr, RONormInf, RONorm1, RONorm2,ROConstraint,ROExpect, ROSq, ROConstraintSet,ROIntVar,ROBinVar, ROInfinity,ROModel,ROVarDRArray, ROVarArray, ROMinimize,ROUnArray, ROAbs, ROPos, ROSum, int},emphstyle={\color{strcolor}\bfseries},
	keywordstyle={\color{blue}\bfseries},
	commentstyle={\color{commentcolor}},
	stringstyle={\color{strcolor}\bfseries},
	language=C++,                % choose the language of the code
	basicstyle={\ttfamily\footnotesize}, % the size of the fonts that are used for the code
	numbers=left,                   % where to put the line-numbers
	numberstyle=\footnotesize,      % the size of the fonts that are used for the line-numbers
	stepnumber=1,                   % the step between two line-numbers. If it's 1 each line will be numbered
	numbersep=5pt,                  % how far the line-numbers are from the code
	backgroundcolor=\color{white},  % choose the background color. You must add \usepackage{color}
	showspaces=false,               % show spaces adding particular underscores
	showstringspaces=false,         % underline spaces within strings
	showtabs=false,                 % show tabs within strings adding particular underscores
	frame=single,	                	% adds a frame around the code
	tabsize=2,	                		% sets default tabsize to 2 spaces
	captionpos=b,                   % sets the caption-position to bottom
	breaklines=true,                % sets automatic line breaking
	breakatwhitespace=false,        % sets if automatic breaks should only happen at whitespace
	escapeinside={\%*}{*)},         % if you want to add a comment within your code
	keywords=[1]{for, break, if, else, function}
}

\def \to{\rightarrow}

\def \N {\mathbb{N}}

\def \lv{\left\vert}
\def \rv{\right\vert}
\def \lc{\left(}
\def \rc{\right)}

\def \abs{|}

\usepackage{enumitem}

\usepackage[linesnumbered,ruled,lined,noend]{algorithm2e}

\SetKwComment{Comment}{/* }{ */}
\SetKwRepeat{Do}{do}{while}

%\newcommand{\m}{{m}}
%\newtheorem{conj}{Conjecture}
%\newtheorem{coro}{Corollary}
%\usepackage{enumitem}

%\usepackage[linesnumbered,ruled,lined,noend]{algorithm2e}

%\SetKwComment{Comment}{/* }{ */}
%\SetKwRepeat{Do}{do}{while}

%\newtheorem{assumption}{Assumption}

 \newtheorem{theorem}{\textbf{Theorem}}[section]
 \newtheorem{remark}[theorem]{\textbf{Remark}}%[section]
 %[section]
 \newtheorem{lemma}[theorem]{\textbf{Lemma}}%[section]
 \newtheorem{corollary}[theorem]{\textbf{Corollary}}%[section]
 %[section]
% \newtheorem{algorithm}{\textbf{Algorithm}}%[section]
 \newtheorem{definition}[theorem]{\textbf{Definition}}%[section]
 %[section]
 
 \newtheorem{assumption}[theorem]{\textbf{Assumption}}%[section]

\makeatletter
\newcommand{\pushright}[1]{\ifmeasuring@#1\else\omit\hfill$\displaystyle#1$\fi\ignorespaces}
\newcommand{\pushleft}[1]{\ifmeasuring@#1\else\omit$\displaystyle#1$\hfill\fi\ignorespaces}
\makeatother

\gdef\AQ#1{}
\gdef\CQ#1{}
\title{Learning equilibria in Cournot mean field games of controls}
\date{}
\author{Fabio Camilli\thanks{University of Rome La Sapienza}, Mathieu Lauri\`ere\thanks{NYU Shanghai Frontiers Science Center of Artificial Intelligence and Deep Learning; NYU-ECNU Institute of Mathematical Sciences at NYU Shanghai}, Qing Tang\thanks{China University of Geosciences (Wuhan), tangqingthomas@gmail.com}}

\begin{document}
\maketitle

\begin{abstract}
We consider Cournot mean field games of controls, a model originally developed for the production of an exhaustible resource by a continuum of producers. We prove  uniqueness of the solution under general assumptions on the price function. Then, we prove convergence of a learning algorithm  which gives existence of a solution to the mean field games  system. The learning algorithm  is implemented with a suitable finite difference discretization to get a numerical method to the solution.  We supplement our theoretical analysis with several numerical examples.
\end{abstract}

\section{Introduction}
Mean field games (MFG) theory has been introduced in \cite{MR2352434,MR2295621} (see \cite{MR3967062,MR3752669} for a comprehensive introduction)  as a   tool  to study games with infinitely many agents and it is particularly suitable to model economic problems for market competition. While in the first formulation of the theory  the influence of the mass of agents on the single player occurs only through the  spatial distribution, a subsequent development, known as mean field games of controls,   considers the case in which the agent also reacts to decisions, i.e.   optimal controls, of the other agents (see \cite{MR4565014,MR3805247, MR3160525,MR4387201}). In this latter framework, a very interesting problem,   first proposed in \cite{gueant2010mean,MR2762362} and then widely studied in the literature,  is the one known as Cournot mean field games of controls,    a model for the production of an exhaustible resource by a continuum of producers.\par
To introduce the Cournot model, we start describing the control problem solved by the representative agent.  The state variable  $x \in  \mathbb{L}=(0,L)$  represents the inventory level. The dynamics of the agent, a producer, is given by the reflected stochastic differential equation
$$
	dX_\tau= -q_\tau\, d\tau + \sqrt{2\sigma^2(x)}dB_\tau-d\xi_{X_\tau},\,\,X_t= x,
$$
where $d\xi_{X_\tau}$ is the local time at $x=L$ and  $q_\tau=q(\tau,X_\tau)$ is the control variable which gives the instantaneous rate of production. For $t\in (0, T)$, letting
$ \tau^*:=\inf \{\tau\geq t:\,\,X_\tau\leq 0\}$
the value function is
\begin{equation}\label{valuefunction}
u(t,x)=\sup_{q_\tau \geq 0} {\mathbb E} \left\{\int_t^{T\wedge \tau^* } e^{-\lambda(\tau-t)}\Big(q_\tau \mathrm{P}(\tau)-\gamma q_\tau-\kappa q^2_\tau\Big) d\tau +e^{-\lambda({T\wedge \tau^* }-t)}u(X_{T\wedge \tau^*})\right\}.
\end{equation}
where $\lambda>0$ is the discount factor, $\mathrm{P}(\tau)$ is the price,  $\gamma q+\kappa q^2$,  with $\gamma,\,\kappa>0$,  the production cost. Given the  function $m(t,\cdot)$ which describes the distribution of the agents at time $t$, assume that
\begin{itemize}
	\item In equilibrium, all agents are using the same policy, which we denote by ${q^*}(\tau,\cdot)$, but they are heterogeneous in their inventories $X_\tau$;
	\item Each agent is infinitesimally small, i.e. the producers influence the market price $\mathrm{P}(\tau)$ only through the average production $\int_0^L {q^*}(\tau,x)m(\tau,x)dx$;
	\item Each agent is a price taker, taking as given the price $\mathrm{P}(\tau)$ from its ``belief'', hence the $\sup$ in \cref{valuefunction} is only over $q_\tau$.
\end{itemize}
Then, the Nash equilibria can be characterized by the MFG of controls system (cfr. \cite[Proposition 3.1, p. 26]{gueant2010mean}):
\begin{equation}\label{MFG}
\left\{\begin{aligned}
	(\textit{i}) \qquad  & \partial_t u +\sigma^2(x) \partial^2_{xx}u -\lambda u +\sup_{q\geq 0}\left\{-q\partial_xu+q\mathrm{P}(t)-\gamma q-\kappa q^2\right\}=0\,\, &&{\rm in}\,\,Q, \\
	            & \mathrm{P}(t)=P\Big( t,\int_0^L {q^*}(t,x)m(t,x)dx\Big), u(t,0)=0,\,\partial_xu(t,L)=0  && {\rm in}\,[0,T], \\                                                               
	            & u(T, x)=u_T(x)&&{\rm in}\,\,\mathbb{L},  \\                                                                      
	   (\textit{ii})\qquad  & \partial_t m- \partial^2_{xx}\big(\sigma^2(x) m\big)  - \text{div} ( m{q^*}) =0&&{\rm in}\,\,Q, \\    
	            & m(t,0)=0,\,\,\partial_{x}\big(\sigma^2(L) m(t,L)\big) +{q^*}(t,L)m(t,L)=0 && {\rm in}\, [0,T],      \\                                   
	            & m(0,x)=m_0(x) &&{\rm in}\,\,\mathbb{L},                                                        \\
	(\textit{iii})\qquad & {q^*}(t,x)=\arg\max_{q\geq 0} \left\{-q\partial_xu+q\mathrm{P}(t)-\gamma q-\kappa q^2\right\},
\end{aligned}\right.
\end{equation}
where $Q:=(0,T)\times \mathbb{L}$. $P(t,\cdot)$ is the price function. {\cref{MFG} $(i)$ is a backward in time Hamilton-Jacobi-Bellman (HJB) equation characterizing the value function $u(t,x)$ with a terminal value $u_T$ for a representative agent (producer); ($ii$) is a forward in time Fokker-Planck-Kolmogorov equation (FPK) which governs the evolution of $m(t)$ driven by the optimal control ${q^*}(t,x)$; ($iii$) is a fixed point equation characterizing the optimal policy  ${q^*}(t,x)$.} Note that the coupling term in ($i$) depends not only on the distribution of the agents $m(t,x)$, but also on the optimal rate of production ${q^*}(t,x)$. \par
Given the boundary conditions for $u$ and $m$ in the MFG system, the operator defining the FPK equation is the adjoint of the infinitesimal generator of the stochastic process $X_t$, in the sense {that for any function $\phi \in \mathcal C^2(0,L)$ and $\phi(L)=0$ we have}
{
\begin{equation}\label{adjoint}
	\int_0^L\lc- \partial^2_{xx}\lc \sigma^2(x) m\rc  - \text{div} ( m{q^*}) \rc \phi dx=\int_0^L\lc -\sigma^2(x) \partial^2_{xx}\phi+{q^*}\partial_x\phi \rc mdx.
\end{equation}
}
From $(i)$ and $(iii)$ in system~\eqref{MFG}, the equation for the optimal policy can be reformulated more explicitly as
\begin{equation}\label{controlrecursive}
{q^*}(t,x)= \lc \frac{P\big(t,\int_0^L {q^*}(t,x)m(t,x)dx\big)-\gamma-\partial_xu(t,x)}{2\kappa} \rc_+.
\end{equation}
Therefore ${q^*}$ is characterized as a \textit{recursive competitive equilibrium}.\par
Throughout the paper we will be also using the notation for the Hamiltonian:
\begin{equation}\label{H def}
	H\lc\mathrm{P}\lc t\rc-\gamma-\partial_xu\rc=\sup_{q\geq 0}\left\{-q\partial_xu+q\mathrm{P}(t)-\gamma q-\kappa q^2\right\}.
\end{equation}
We denote the flux at $(t,x)$ and aggregate production at $t$, respectively by:
\begin{equation}\label{def w}
	{w^*}(t,x)={q^*}(t,x)m(t,x),\,\,\,\psi(t)=\int_0^L{w^*}(t,x)dx.
\end{equation}
Following \cite{gueant2010mean,MR2762362},   the price function $P$ is determined according to a rule of supply/demand equilibrium on the market, where the demand is given by a function $D(t, P)$,  monotonically decreasing w.r.t. the $P$-variable.
Hence for any aggregate production at time $t$, designated by $a\geq 0$,
\begin{equation}\label{def P}
	D(t,P)=a,\,\,\, P\big(t,a\big)=D(t,\cdot)^{-1}(a).
\end{equation}
and we denote
$
	\mathrm{P}(t)=P\big(t,\psi(t)\big)=D(t,\cdot)^{-1}\big(\psi(t)\big).
$
Two typical examples of demand and price functions are given by (see \cite{MR2762362}):
\begin{align}
\label{DP1}
	D(t,P)=Ee^{\rho t}(\pi_{\text{sub}}-P),\,\,P(t,a)=\pi_{\text{sub}}-\frac{1}{E}e^{-\rho t}a;\\
\label{DP2}
	D(t,P)=Ee^{\rho t}P^{-\eta}-\delta,\,\,P(t,a)=E^{\frac{1}{\eta}}e^{\frac{\rho t}{\eta}}\frac{1}{\big(\delta+a\big)^{\frac{1}{\eta}}}.
\end{align}

In both of these examples, $E>0$ stands for a global wealth factor and $\rho$  for the growth of demand in the economy. In \cref{DP1} the constant $\pi_{\text{sub}}>0$ is exogenous and represents the price of a substitute. The demand in \cref{DP2} satisfies constant elasticity of substitution (CES), where a  high $\delta$ stands for a low price of substitution.\par
{The system \cref{MFG} was proposed in \cite{gueant2010mean,MR2762362} as a continuous time stochastic generalization to the Hotelling's rent theory. It was also discussed in section 1.4.4 of \cite{MR3752669}. A similar type of model, but without considering the demand growth or production cost, was proposed by Chan and Sircar  \cite{MR3359708,chan2017fracking}. Theoretical analysis of the Chan and Sircar model was developed in  \cite{MR3755719,MR4223351,MR3888969,MR4064472,MR4506079}, usually assuming some smallness conditions if the price function is nonlinear  (see \cref{compare with Sircar} for more details). In \cite{bonnans2021schauder,kobeissi2022mean,MR4387201,MR4659381}, MFG of controls with general nonlinear price functions under monotonicity assumptions were considered.} {However, the space domain in models of \cite{bonnans2021schauder,kobeissi2022mean,MR4387201,MR4659381} were either the torus or the whole space. This is not suitable for studying production competition of \textit{exhaustible} resources, which implies a lower bound on the state. } \par
{Under monotonicity assumptions on $P(t,\cdot)$ similar to \cite{bonnans2021schauder,kobeissi2022mean}, which in particular include  the ones defined in \cref{DP1,DP2}, we prove existence and uniqueness of the classical solution to system~\eqref{MFG}. Existence is obtained by proving  the global convergence of   the {\it smoothed policy iteration} (SPI) algorithm, an iterative procedure introduced in \cite{tang2023learning} for solving potential MFGs. SPI is a learning algorithm as fictitious play (cfr. \cite{MR3608094,MR4659381}), but instead of computing an optimal control at each iteration, the agent only evaluates a policy (which amounts to solving a linear problem) and performs a greedy update. The convergence analysis of SPI for Cournot MFG of controls, compared to \cite{tang2023learning}, faces additional difficulties due to the nonlinear fixed point equation \cref{MFG} ($iii$) for characterizing the optimal policy. }\par
Numerical simulations of Cournot MFG of controls have been considered in \cite{MR2762362} and then in \cite{MR3359708} and \cite{MR4659381}. Dealing with the numerical resolution of MFG systems by finite difference method typically requires using some iterative algorithms, e.g. fixed point~\cite{MR4146720,MR4223355}, Newton method~\cite{MR2679575,MR2888257}, fictitious play \cite{MR3608094,MR4030259,MR4368188,MR4659381}, policy iteration \cite{MR4291367,MR4392286,MR4534442} or smoothed policy iteration \cite{tang2023learning}. See e.g.~\cite{MR4214777,MR4368188} and the references therein for more details. In particular, the authors of \cite{MR4659381} considered convergence rate of a fictitious play algorithm for solving an MFG of controls, similar to our Cournot model but {with periodic boundary conditions}. The authors of \cite{MR4146720} considered numerical solution of a Cucker-Smale type MFG of controls with periodic or Neumann boundary conditions. Fictitious play type iterative computation has been considered in \cite{deschamps1975algorithm,MR1039721} for one shot Cournot games with a finite number of agents. {In this paper, we use SPI to solve system \eqref{MFG} with \textit{fully implicit} finite difference schemes. \textit{Implicit} schemes are  advantageous when solving models with a long time horizon and small diffusion. SPI is particularly compatible with implementing \textit{implicit} schemes as equations are linearized at the continuous level.} \par
We observe that, in contrast to \cite{MR2762362} where the Cournot MFG of controls was considered on the interval $[0,T]\times [0,+\infty)$, we study the problem on the bounded interval $[0,T]\times [0,L]$ and we impose a Neumann boundary condition for $u$  and the dual   flux boundary condition  for $m$  at $x=L$. Moreover in \cite{MR2762362}, the dynamics of the agent is given by a geometric Brownian motion (GBM), hence the diffusion coefficient $\sigma^2(x)$ may degenerate at $x=0$. We assume $\sigma>0$ in the theoretical analysis. However, our numerical experiments show that the method performs well in models with GBM diffusions. We plan to consider the theoretical aspects of the degenerate diffusion in the future. We conclude observing that our analytical and numerical method can be extended to consider a wide range of competitive equilibrium models.

\section{Assumptions and preliminary results}\label{Preliminaries and Assumptions}

We first introduce some functional spaces. For any $\mathrm{r}\geq1$, $L^{\mathrm{r}}(\mathbb{L})$ and $L^{\mathrm{r}}(Q)$ are the usual Lebesgue spaces. {Given a Banach space $X$, we denote with $L^{\mathrm{r}}(0,T;X)$ the space of functions $\mathrm{u}:(0,T)\rightarrow X$ such that $\int_0^T\|\mathrm{u}(t)\|^{\mathrm{r}}_Xdt<+\infty$.}
We denote with $H^1(\mathbb{L})$ the space of all functions $\mathrm{u}$ on $\mathbb{L}$, such that $\mathrm{u}(0)=0$ and the norm
$
	\|\mathrm{u}\|_{H^1(\mathbb{L})}:=\|\mathrm{u}\|_{L^{2}(\mathbb{L})}+\|\partial_x\mathrm{u}\|_{L^{2}(\mathbb{L})}
$
is finite. The dual space of $H^1(\mathbb{L})$ is denoted by $H^{-1}(\mathbb{L})$.  We denote by $W^{1,2}_{\mathrm{r}}(Q)$ the space of functions $\mathrm{u}$ such that $\mathrm{u},\partial_t\mathrm{u},\partial_x\mathrm{u}, \partial^2_{xx}\mathrm{u}$ are all in $L^{\mathrm{r}}(Q)$, endowed with the norm
$
	\|\mathrm{u}\|_{W^{1,2}_{\mathrm{r}}(Q)}:=\|\mathrm{u}\|_{L^{\mathrm{r}}(Q)}+\|\partial_t\mathrm{u}\|_{L^{\mathrm{r}}(Q)}+\|\partial_x\mathrm{u}\|_{L^{\mathrm{r}}(Q)}+\|\partial^2_{xx}\mathrm{u}\|_{L^{\mathrm{r}}(Q)}.
$
{We also denote by $V^{0,1}_2(Q)$ the Banach space of functions $\mathrm{u}: Q\rightarrow \mathbb{R}$ such that the norm (cfr. \cite[p. 6]{MR0241822})
$
\|\mathrm{u}\|_{V^{0,1}_2(Q)}:=\max_{0\leq t\leq T}\|\mathrm{u}(t,\cdot)\|_{L^{2}(\mathbb{L})}+\|\partial_x\mathrm{u}\|_{L^{2}(Q)}
$
is finite. }Let $\mathcal{C}^0(Q)$ be the space of continuous functions in $Q$ and $\mathcal C^{0,1}(Q)$ the space of functions in $\mathcal C^0(Q)$ and once continuously differentiable w.r.t. $x$. For $\alpha \in (0,1)$, $\mathcal C^{\alpha/2,\alpha}(Q)$ and $\mathcal C^{1+\alpha/2,2+\alpha}(Q)$ denote the spaces of H\"{o}lder continuous functions in $Q$, endowed with the norms
\begin{align*}
	\|\mathrm{u}\|_{\mathcal C^{\alpha/2,\alpha}(Q)}&:=\|\mathrm{u}\|_{\mathcal C^0(Q)}+\sup_{(t_1,x_1)\neq (t_2,x_2)\in Q}\frac{\vert \mathrm{u}(t_1,x_1)-\mathrm{u}(t_2,x_2)\vert}{\big(\vert x_1-x_2\vert^2+\vert t_1-t_2\vert \big)^{\alpha/2}},\\
	\|\mathrm{u}\|_{\mathcal C^{(1+\alpha)/2,1+\alpha}(Q)}&:=\|\mathrm{u}\|_{\mathcal C^0(Q)}+\|\frac{\partial \mathrm{u}}{\partial x}\|_{\mathcal C^{\alpha/2,\alpha}(Q)}+\sup_{(t_1,x)\neq (t_2,x)\in Q}\frac{\vert \mathrm{u}(t_1,x)-\mathrm{u}(t_2,x)\vert}{\vert t_1-t_2\vert^{(1+\alpha)/2}},\\
	\|\mathrm{u}\|_{\mathcal C^{1+\alpha/2,2+\alpha}(Q)}&:=\|\mathrm{u}\|_{\mathcal C^0(Q)}+\|\frac{\partial \mathrm{u}}{\partial x}\|_{\mathcal C^{(1+\alpha)/2,1+\alpha}(Q)}+\|\frac{\partial \mathrm{u}}{\partial t}\|_{\mathcal C^{\alpha/2,\alpha}(Q)}.
\end{align*}
Throughout the paper we use $r$ for an arbitrary constant such that {$r> 3$}. It is known from \cite[Corollary pp. 342-343]{MR0241822} 
{
\begin{equation}\label{Sobolev}
	W^{1,2}_{r}(Q)\,\,\text{is continuously embedded in}\,\,\mathcal C^{(1+\alpha)/2,1+\alpha}(Q),\quad \alpha=1-\frac{3}{r}.
\end{equation}
}

	\begin{assumption}\label{Main assumptions} We make the following standing assumptions:\\
	% \label{hyp2}
	\begin{enumerate}[label=\normalfont{\textbf{(A.\arabic*)}},align=left]
		\item\label{assumption1} $m_0$ is $\mathcal C^3(\mathbb{L})$, $\int_0^Lm_0(x)dx=1$, {$m_0(0)=0$ and $m_0(x)>0$ for all $x\in (0,L]$}. 
		\item\label{assumption2} $u_T$ is $\mathcal{C}^3(\mathbb{L})$ and
	      $$
		      \partial_xu_T(x)\geq 0,\,u_T(0)=0,\,\partial_xu_T(L)=0.
	      $$
		\item\label{assumption3} The price function $P(t,a): [0,T]\times \mathbb{R}^+\rightarrow \mathbb{R}$ is $\mathcal{C}^{2}$ for both variables. \\$\partial_aP(t,a)\leq 0$ for all $ t\in [0,T]$. There exists $\Phi(t,a): [0,T]\times \mathbb{R}^+\rightarrow \mathbb{R}$, $\Phi$ being $\mathcal C^3$ for both variables, such that $P(t,a)=\partial_a\Phi(t,a)$ for all $ t\in [0,T]$. Moreover, there exist $C_P>0$, such that $\gamma<P(t,0)\leq C_P+\gamma$, for all $t\in [0,T]$.
	     
		\item\label{assumption4} The coefficient $\sigma$ is $\mathcal C^2(\mathbb{L})$ and   non degenerate, i.e. $\sigma(x)\ge \sigma_0>0$.
	\end{enumerate}
	\end{assumption}	
\begin{remark}\label{J}
{With \ref{assumption3}, we can define the potential formulation
\begin{equation*}
J(q,m)=\int_0^T e^{-\lambda t}\Phi(t,\int_0^L qmdx)dt-\int_Qe^{-\lambda t} \lc\gamma q+\kappa q^2\rc mdxdt.
\end{equation*}
It can be shown with a method similar to \cite{bonnans2021schauder} that system \cref{MFG} can be derived from the first order condition of maximizing $J(q,m)$ with $(q,m)$ constrained to satisfy \cref{MFG} $(ii)$. The proof will be different with the absorption boundary condition. As one can use similar methods from \cite{MR4223355}, we omit the details. $J(q,m)$ will be used as an auxiliary functional in our main convergence result. }
\end{remark}
\begin{remark}
	\ref{assumption3} implies
	\begin{equation}\label{P bound}
		P(t,a)\leq C_P+\gamma, \quad \forall t\in [0,T].
	\end{equation}
	We observe that \ref{assumption3} is satisfied by both examples \cref{DP1} and \cref{DP2}, by setting $C_P=\pi_{\text{sub}}-\gamma$ in \cref{DP1} and $C_P=(\frac{E}{\delta})^{\frac{1}{\eta}}-\gamma$ in \cref{DP2}. 
\end{remark}
\begin{remark}\label{compare with Sircar}
	We now consider a particular case of the linear price impact model \cref{DP1} by taking $\rho=\gamma=0$ and $\kappa=1$. The revenue of the producer (income minus cost) then becomes:
		$
			q\mathrm{P}(t)-\gamma q-\kappa q^2=q(\pi_{\text{sub}}-\frac{1}{E}\psi(t)-q).
		$
		If we interpret $\frac{1}{E}$ as the measure of competition between producers and $\tilde{P}(t)=\pi_{\text{sub}}-\frac{1}{E}\psi(t)-q$ as the price, then we recover the price impact model in literature \cite{MR4064472,MR3755719,MR3888969,MR4223351,MR4506079}. Generalizations, e.g. in \cite{MR4506079}, consist in price of the form
	$
		\tilde{P}(t)=\pi_{\text{sub}}-\Psi \big(\frac{1}{E}\psi(t)+q\big),
	$
	where  $\Psi$ is a nonlinear monotone increasing function. A typical condition for the existence and uniqueness of solution to the Cournot MFG system  with price defined by $\tilde{P}(t)$ is the smallness of $\frac{1}{E}$ or $T$. Smallness conditions for  general MFG of controls model have been discussed in \cite{MR4565014,MR4387201}. In our model, following \cite{MR2762362}, we assume that the price depends only on the average production $\psi(t)$ and we make no smallness assumptions on the parameters throughout the paper. {A key assumption is the monotonicity of $P(t,\cdot)$, which follows closely previous literature \cite{kobeissi2022mean} and \cite{bonnans2021schauder}. However, our model differs from \cite{kobeissi2022mean} and \cite{bonnans2021schauder} because of the boundary condition and control constraint $q\geq 0$.}
\end{remark}

We first consider some results related to the FPK equation with Robin boundary conditions (see \cite{MR4223355,MR3888965,MR3333058} for related results). Given the initial-boundary value problem
\begin{equation}\label{FPK}
	\left\{\begin{aligned}
		  \qquad &	\partial_t m- \partial^2_{xx}(\sigma^2(x)m)-{\rm{div}} (m q)=0 \qquad  &&{\rm in}\,Q,    \\
		  \qquad & m(t,0)=0\,\,\text{and} \,\,\partial_x\big(\sigma^2(L) m(t,L)\big)+m(t,L) q(t,L)=0 && {\rm in}\, [0,T],                    \\
		  \qquad & m(0,x)=m_0(x)                                                                      && {\rm in}\,\mathbb{L}.
	\end{aligned}\right.
\end{equation}
We give the following definition of weak solution (see \cite[Chapter V, section 1]{MR0241822}).
\begin{definition}\label{def weak sol}
	A function $m$ is said to be a bounded weak solution to \cref{FPK} if $m\in L^\infty(Q)$,  $\partial_x m\in L^2(Q)$ and
	\begin{equation*}
	\int_0^L m(t,x)\varphi(t,x) dx-\int_0^L m_0(x)\varphi(0,x) dx+\int_0^t\int_0^L\left(-m\partial_t \varphi+\partial_x\varphi \partial_x\big(\sigma^2(x)m\big)+q m\partial_x\varphi  \right)dxdt=0,
	\end{equation*}
	for any $t\in (0,T]$ and any function $\varphi$ such that $\varphi$, $\partial_t\varphi, \partial_x\varphi\in L^2(Q)$, $\varphi(t,0)=\varphi(t,L)=0$.
\end{definition}
\begin{lemma}\label{m stability}
	Suppose $m_0$ satisfies \ref{assumption1} and $0\leq q(t,x)\leq R$.
	Then, there exists a unique solution $m$ to \cref{FPK} satisfying $m\in L^2(0,T;H^1(\mathbb{L}))\cap L^\infty(Q)$, $\partial_tm\in L^2\big(0,T;H^{-1}(\mathbb{L})\big)$. In addition, $\|m\|_{C^{\alpha/2,\alpha}}(Q)\leq C$, {$m(t,x)\geq 0$ in $Q$ and $m(t,x)>0$ for all $(t,x)\in [0,T]\times (0,L)$}.
	Let $m_\iota$, $\iota=\{1,2\}$, be the solution to \cref{FPK} corresponding to $q_\iota$ and set
	$\varrho=m_1-m_2$. Then
	$\|\varrho\|_{L^\infty(0,T;L^2(\mathbb{L}))}\leq C_\varrho \|q_1-q_2\|_{L^\infty(Q)}$, 
	where $C_\varrho$ is a constant depending only on {$m_0$, $T$, $L$, $R$, $\|\sigma\|_{\mathcal C^1(\mathbb{L})}$ and $\sigma_0$}. 
\end{lemma}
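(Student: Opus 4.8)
The plan is to treat \cref{FPK} as a linear, uniformly parabolic equation in divergence form, to build the solution in the natural energy space, and then to supply the $L^\infty$ bound and Hölder regularity from De Giorgi--Nash--Moser theory, while uniqueness, nonnegativity, positivity and the stability estimate all reduce to energy arguments. First I would rewrite the equation in divergence form: expanding $\partial^2_{xx}(\sigma^2 m)=\partial_x\big((\sigma^2)'m+\sigma^2\partial_x m\big)$, \cref{FPK} reads $\partial_t m=\partial_x F$ with $F=\sigma^2\partial_x m+bm$ and $b=(\sigma^2)'+q$. By \ref{assumption4} the principal coefficient satisfies $\sigma^2\geq\sigma_0^2>0$, and since $0\le q\le R$ and $\sigma\in\mathcal C^2$ the drift $b$ is bounded, so the operator is uniformly parabolic with bounded measurable coefficients. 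The Dirichlet condition at $x=0$ is encoded by working in $H^1(\mathbb{L})$, and a direct integration by parts shows that the flux $F(t,L)$ equals the left-hand side of the Robin condition, so this condition arises as the \emph{natural} boundary condition of the variational formulation. Existence of $m\in L^2(0,T;H^1(\mathbb{L}))\cap L^\infty(0,T;L^2(\mathbb{L}))$ with $\partial_t m\in L^2(0,T;H^{-1}(\mathbb{L}))$ then follows from a Faedo--Galerkin scheme and the energy estimate obtained by testing with $m$, the boundary term at $x=L$ vanishing because $F(t,L)=0$ and that at $x=0$ because $m(t,0)=0$; the solution so obtained in particular satisfies \cref{def weak sol}.

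Next, the bound $\|m\|_{L^\infty(Q)}\le C$ would be obtained by Stampacchia truncation (testing with $(m-k)_+$) or, equivalently, by the boundedness theorem of \cite[Chapter V]{MR0241822}. Nonnegativity follows by testing with the negative part $-m_-=\min(m,0)$: here the boundary contributions again vanish, as $\min(m,0)(t,0)=0$ and $F(t,L)=0$, so that coercivity, Young's inequality, Gronwall's lemma and $(m_0)_-=0$ force $m_-\equiv 0$. Given the $L^\infty$ bound, the estimate $\|m\|_{\mathcal C^{\alpha/2,\alpha}(Q)}\le C$ is precisely the De Giorgi--Nash--Moser regularity for bounded weak solutions of divergence-form parabolic equations with the present mixed Dirichlet--Robin data \cite[Chapter V]{MR0241822}. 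Strict positivity on $[0,T]\times(0,L)$ then combines the initial positivity $m_0>0$ on $(0,L]$ from \ref{assumption1} with the parabolic strong maximum principle (Harnack inequality), applied to the nonnegative, not identically zero solution $m$ on the open spatial interval.

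For the stability estimate, set $\varrho=m_1-m_2$. Subtracting the two equations gives $\partial_t\varrho=\partial_x\big(\sigma^2\partial_x\varrho+b_1\varrho+m_2(q_1-q_2)\big)$ with $b_1=(\sigma^2)'+q_1$ and $\varrho(0,\cdot)=0$. The crucial observation is that subtracting the two Robin conditions shows the full flux of $\varrho$ vanishes at $x=L$, while $\varrho(t,0)=0$; hence testing with $\varrho$ produces no boundary terms. The coercivity $\sigma^2\ge\sigma_0^2$ controls $\|\partial_x\varrho\|_{L^2}^2$, the terms $b_1\varrho\,\partial_x\varrho$ and $m_2(q_1-q_2)\partial_x\varrho$ are absorbed by Young's inequality using the already established bound on $\|m_2\|_{L^\infty(Q)}$ (which depends only on $m_0,T,L,R,\|\sigma\|_{\mathcal C^1(\mathbb{L})},\sigma_0$), and Gronwall's lemma yields $\|\varrho\|_{L^\infty(0,T;L^2(\mathbb{L}))}\le C_\varrho\|q_1-q_2\|_{L^\infty(Q)}$. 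Taking $q_1=q_2$ in this same argument gives uniqueness as a by-product.

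I expect the main obstacle to be establishing the global $L^\infty$ and Hölder bounds up to the boundary: since $q$ is only bounded and not continuous, no classical Schauder theory applies, and the mixed Dirichlet--Robin conditions require a careful boundary version of the De Giorgi--Nash--Moser / \cite{MR0241822} machinery, as does the Harnack inequality underlying strict positivity. By contrast, the energy-based parts (existence via Galerkin, uniqueness, nonnegativity, and the stability bound) are comparatively routine once one checks that the boundary terms vanish thanks to the Dirichlet condition at $x=0$ and the Robin condition at $x=L$.
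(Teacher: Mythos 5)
Your proposal is correct and follows essentially the same route as the paper: energy estimates for existence, uniqueness and the stability bound, the $L^\infty$ and H\"older estimates from the Ladyzhenskaya--Solonnikov--Uraltseva theory for bounded weak solutions of divergence-form parabolic equations, and the Harnack inequality combined with $m_0>0$ on $(0,L]$ for strict positivity. The only (minor) deviation is that you obtain nonnegativity by testing with the negative part, whereas the paper invokes a duality argument; both are standard and equivalent here.
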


\begin{proof}  The existence and uniqueness of a weak solution
$m\in L^2(0,T;H^1(\mathbb{L}))$, $\partial_tm\in L^2\big(0,T;H^{-1}(\mathbb{L})\big)$
is standard from energy estimates (cfr. \cite{MR4392286}). From \cite[Proposition 2.2]{MR3888965}, there exists a constant $C_m$ which depends only on $m_0$, $L$, $T$ and $R$ such that
$
	\|m\|_{L^\infty(Q)}\leq C_m.
$
Therefore, $m$ is a bounded weak solution in the sense of \cref{def weak sol} and $\|m\|_{C^{\alpha/2,\alpha}(Q)}\leq C$ follows from \cite[Chapter V, Theorem 1.1]{MR0241822}. {One can use duality method to show $m(t,x)\geq 0$ as in \cite[Proposition 4.3]{MR3559742}. Suppose $m(t^*,x^*)=0$, $(t^*,x^*)\in (0,T]\times (0,L)$. From $m(0,x^*)>0$ and $\|m\|_{C^{\alpha/2,\alpha}(Q)}\leq C$, there exists $\tau_0$ sufficiently small such that $m(\tau_0,x^*)\geq \frac{1}{2}m(0,x^*)>0$. On the other hand, from  $m(t^*,x^*)=0$ and by applying Harnack inequality (e.g. \cite[Theorem 1.1]{GYONGY2024857}), we have  $m(\tau_0,x^*)=0$, a contradiction. Hence $m(t,x)>0$ for all $(t,x)\in [0,T]\times (0,L)$.} It is clear that $\varrho=m_1-m_2$ satisfies 
\begin{equation*}
	\left\{\begin{aligned}
		\qquad & \partial_t\varrho-\partial^2_{xx}\big(\sigma^2(x) \varrho\big)-{\rm{div}} (\varrho q_2)={\rm{div}} (m_1(q_1-q_2)),    \\
		\qquad & \partial_x\big(\sigma^2(L)\varrho(t,L)\big)+\varrho(t,L) q_{2}(t,L)+m_1(t,L)(q_1(t,L)-q_2(t,L))=0,                    \\
		\qquad & \varrho(t,0)=0, \,\varrho(0,x)=0.
	\end{aligned}\right.
\end{equation*}
{By energy estimates (cfr. \cite{MR4392286} or \cite[Lemma 17]{MR4659381})}, we obtain the bound for $\varrho$.
\end{proof}
%%% 
\begin{corollary}\label{m L1}
	Under the assumptions of \cref{m stability}, for all $0\leq t_0<t_1\leq T$, $\int_0^Lm(t_1,x)dx\leq \int_0^Lm(t_0,x)dx$.
\end{corollary}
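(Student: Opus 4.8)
The plan is to show that the total mass $M(t):=\int_0^L m(t,x)\,dx$ is non-increasing in $t$; integrating the resulting differential inequality from $t_0$ to $t_1$ then yields the claim. The guiding observation is that \cref{FPK} is a conservation law in flux form: writing the spatial operator as $-\partial_x\big(\partial_x(\sigma^2 m)+mq\big)$, mass can change only through the boundary fluxes at $x=0$ and $x=L$. The no-flux (Robin) condition at $x=L$ should neutralise that endpoint, while at $x=0$ the absorbing condition $m(t,0)=0$ together with the positivity $m\ge 0$ (both from \cref{m stability}) should force the remaining boundary flux to have the sign that drains mass.

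Formally, I would integrate \cref{FPK} over $\mathbb{L}$ to obtain $M'(t)=\big[\partial_x(\sigma^2 m)+mq\big]_{x=0}^{x=L}$. At $x=L$ the bracket vanishes by the Robin condition $\partial_x(\sigma^2(L)m(t,L))+q(t,L)m(t,L)=0$; at $x=0$ the drift term $mq$ vanishes because $m(t,0)=0$, leaving $M'(t)=-\partial_x(\sigma^2 m)(t,0)$. Since $x\mapsto \sigma^2(x)m(t,x)$ is nonnegative on $\mathbb{L}$ and vanishes at $x=0$, its one-sided derivative there is $\ge 0$, whence $M'(t)\le 0$.

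To make this rigorous for the weak solution of \cref{def weak sol}, I would approximate the (inadmissible) constant test function by a family of time-independent cut-offs $\varphi_n$ equal to $1$ on $[1/n,L-1/n]$ and interpolating linearly down to $0$ at the two endpoints, insert them into the weak identity of \cref{def weak sol} written at $t=t_1$ and at $t=t_0$, and subtract. Since $\partial_t\varphi_n=0$ and $m\in L^\infty(Q)$, the mass terms converge by dominated convergence to $M(t_1)-M(t_0)$, while the flux integral splits into a contribution near $x=0$ and one near $x=L$. The near-$0$ contribution equals $n\int_0^{1/n}\big(\partial_x(\sigma^2 m)+qm\big)\,dx=n\,\sigma^2(1/n)m(t,1/n)+n\int_0^{1/n}qm\,dx\ge 0$ term by term (using $m\ge 0$ and $m(t,0)=0$), so it contributes $\le 0$ to $M(t_1)-M(t_0)$; this is the robust half of the argument and needs only positivity.

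The delicate point, and the one I expect to be the main obstacle, is showing that the near-$L$ contribution $-n\int_{L-1/n}^L\big(\partial_x(\sigma^2 m)+qm\big)\,dx$ tends to $0$. This is precisely the Robin boundary flux, but because the test functions in \cref{def weak sol} are required to vanish at $x=L$, the Robin condition is not directly available from the weak formulation and must instead be read off from a trace of the flux $\partial_x(\sigma^2 m)+qm$ at $x=L$. I would therefore upgrade the regularity supplied by \cref{m stability} (the bound $\|m\|_{C^{\alpha/2,\alpha}(Q)}\le C$ together with $\partial_x m\in L^2(Q)$) via parabolic estimates, so that $\sigma^2 m$ is differentiable up to $x=L$ and the Robin identity holds in the trace sense; the difference quotient $n\int_{L-1/n}^L \partial_x(\sigma^2 m)\,dx$ then converges to $\partial_x(\sigma^2 m)(t,L)$ and cancels against $q(t,L)m(t,L)$. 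With both boundary limits in hand, passing to the limit $n\to\infty$ gives $M(t_1)-M(t_0)\le 0$.
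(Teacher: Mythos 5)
Your argument is the same as the paper's: integrate the equation over $\mathbb{L}$, cancel the $x=L$ boundary term via the Robin condition, and use that $\sigma^2 m\geq 0$ vanishes at $x=0$ to conclude $\partial_x(\sigma^2 m)(t,0)\geq 0$, hence the mass is non-increasing. The paper states this in two lines via integration by parts, while you additionally spell out the cutoff-test-function justification for the weak formulation that the paper leaves implicit; the substance is identical.
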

\begin{proof}
Since $\sigma^2(0) m(t,0)=0$ and $\sigma^2(x) m(t,x)\geq 0$ for all $x\in (0,L]$, then $\partial_x\big(\sigma^2(0) m(t,0)\big)\geq 0$. We obtain
$$
	\int_0^Lm(t_1,x)dx-\int_0^Lm(t_0,x)dx=-\int_{t_0}^{t_1}\partial_x\big(\sigma^2(0) m(t,0)\big)dt\leq 0
$$
from integration by parts.
\end{proof}

We now consider some preliminary results. Define the space 
{
\begin{equation}\label{bbmX}
\mathbbm{X}:=\big\{(u,m,q):\,\,u\in W^{1,2}_r(Q), m\in L^2(0,T;H^1(\mathbb{L}))\cap L^\infty(Q),\,\,\partial_tm\in L^2\big(0,T;H^{-1}(\mathbb{L})\big), q\in L^\infty(Q) \big\},
\end{equation}
for functions $u,m: Q\rightarrow \mathbb{R}$ and $q: Q\rightarrow \mathbb{R}^+\cup \{0\}$.} We call a triple $(u,m,{q^*})\in \mathbbm{X}$ a weak solution to the Cournot MFG of controls system \eqref{MFG} if (\textit{i}) and (\textit{ii}) are satisfied in the sense of distributions and (\textit{iii}) is satisfied pointwise.

\begin{lemma}\label{monotone}
	Suppose $(u,m,{q^*})$ is a weak solution to system~\eqref{MFG}. Then $u(t,x)\geq 0$ and 
	$\partial_xu(t,x)\geq 0$. In addition,
	\begin{equation}\label{q bound}
		0\leq {q^*}(t,x)\leq \frac{C_P}{2\kappa} \quad  \text{and}\quad
		\mathrm{P}(t)> \gamma\quad  \quad \forall (t,x)\in Q.
	\end{equation}
\end{lemma}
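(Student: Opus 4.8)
The plan is to prove the claims in the order $u\ge 0$, then $\partial_x u\ge 0$, and finally the two bounds on $q^*$ and on $\mathrm{P}(t)$, which become essentially algebraic once the monotonicity is in hand. For the nonnegativity of $u$ I would apply a maximum principle directly to the HJB equation in \eqref{MFG}. Since the supremum defining $H$ is taken over a set containing $q=0$ (which gives value $0$), we have $H\ge 0$, so $u$ satisfies $\partial_t u+\sigma^2\partial_{xx}u-\lambda u=-H\le 0$ and is thus a supersolution of the associated linear operator. By Assumption~\ref{assumption2}, $u_T(0)=0$ and $\partial_x u_T\ge 0$ give $u_T\ge 0$, and the lateral data are $u(t,0)=0$, $\partial_x u(t,L)=0$. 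Reversing time, an interior negative minimum is excluded because the zeroth-order coefficient $\lambda>0$ has the favorable sign, the data at the initial slice and at $x=0$ are nonnegative, and Hopf's lemma rules out a negative minimum at the Neumann boundary $x=L$; hence $u\ge 0$ on $Q$.

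For the monotonicity, set $v=\partial_x u$ and differentiate the HJB equation in $x$. Using the envelope identity $\partial_x H(\mathrm{P}(t)-\gamma-\partial_x u)=-q^*\,\partial_{xx}u$, which is legitimate because $H$ is $C^1$ with $H'=q^*$ as in \eqref{controlrecursive}, the function $v$ solves the linear parabolic equation
$$\partial_t v+\sigma^2\partial_{xx}v+(2\sigma\sigma'-q^*)\partial_x v-\lambda v=0\quad\text{in }Q,$$
with terminal datum $v(T,\cdot)=\partial_x u_T\ge 0$ and $v(t,L)=0$. The decisive point is the boundary behaviour at the exhaustion end $x=0$: since $u(t,0)\equiv 0$ forces $\partial_t u(t,0)=0$, evaluating the HJB equation at $x=0$ yields $\sigma^2(0)\,\partial_x v(t,0)=-H(\mathrm{P}(t)-\gamma-v(t,0))\le 0$. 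After reversing time, a negative minimum of $v$ can lie neither in the interior nor at the initial slice nor at $x=L$ (minimum principle and sign of the data); were it attained at $x=0$, Hopf's lemma would force the inward derivative $\partial_x v(t,0)>0$, contradicting $\partial_x v(t,0)\le 0$. Hence $\partial_x u=v\ge 0$.

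The two remaining estimates follow quickly. From \eqref{controlrecursive}, $q^*=\big((\mathrm{P}(t)-\gamma-\partial_x u)/2\kappa\big)_+\ge 0$, and with $\partial_x u\ge 0$ together with $\mathrm{P}(t)=P(t,\psi(t))\le P(t,0)\le C_P+\gamma$ (monotonicity of $P(t,\cdot)$, $\psi(t)\ge 0$, and Assumption~\ref{assumption3}) we obtain $q^*\le(\mathrm{P}(t)-\gamma)_+/2\kappa\le C_P/2\kappa$. For $\mathrm{P}(t)>\gamma$ I would argue by contradiction: if $\mathrm{P}(t)\le\gamma$ for some $t$, then $\mathrm{P}(t)-\gamma-\partial_x u\le-\partial_x u\le 0$, so $q^*(t,\cdot)\equiv 0$, whence $\psi(t)=\int_0^L q^* m\,dx=0$ and therefore $\mathrm{P}(t)=P(t,0)>\gamma$ by Assumption~\ref{assumption3}, a contradiction.

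I expect the main obstacle to be the regularity required to differentiate the HJB equation and to invoke the classical minimum principle and Hopf's lemma, since a priori the triple is only a weak solution with $u\in W^{1,2}_r(Q)$ and $H$ is merely $C^1$. This is resolved either by first bootstrapping $u$ to a sufficiently smooth solution (all the data are smooth and $q^*$ is continuous, so the coefficient $2\sigma\sigma'-q^*$ of the $v$-equation is continuous) or by running the maximum-principle argument on difference quotients of $u$ in $x$ and passing to the limit. The genuinely delicate part is the one-sided boundary relation at $x=0$, where the sign of $\partial_x v(t,0)$ coming from the $u(t,0)\equiv 0$ constraint is exactly what closes the argument.
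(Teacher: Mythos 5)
Your proposal is correct and follows the same overall strategy as the paper: maximum principle for $u\ge 0$, differentiation of the HJB equation and a minimum principle for $\phi=\partial_x u$, and then the algebraic deduction of \eqref{q bound} and the contradiction argument for $\mathrm{P}(t)>\gamma$, which are word-for-word the paper's arguments. The one genuine divergence is the boundary condition for $\phi$ at $x=0$. You derive a Neumann-type sign condition $\sigma^2(0)\,\partial_x\phi(t,0)=-H\le 0$ by evaluating the equation on the boundary and then exclude a negative minimum there via Hopf's lemma; this works but costs you $C^2$ regularity up to $x=0$ and a strict-minimum/interior-sphere argument. The paper instead observes that $u\ge 0$ together with $u(t,0)=0$ already gives the one-sided inequality $\partial_x u(t,0)\ge 0$, i.e.\ a Dirichlet-type sign condition $\phi(t,0)\ge 0$, so the weak maximum principle (\cite[Chapter 3, Theorem 7.2]{MR0241822}) applies directly with nonnegative data on all of the parabolic boundary and no Hopf lemma is needed. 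Regarding the regularity obstacle you flag at the end: the paper resolves it exactly along the lines you suggest, by first using the quadratic bound $H\le \frac{C_P^2+(\partial_x u)^2}{2\kappa}$ and quasilinear parabolic estimates to get $\partial_x u\in L^\infty(Q)$ and $u\in W^{1,2}_r(Q)$, so that the differentiated equation holds weakly with a bounded first-order coefficient $-\left(\frac{\mathrm{P}(t)-\gamma-\phi}{2\kappa}\right)_+\mathbf{1}_{\{\mathrm{P}(t)-\gamma-\phi\ge 0\}}$ and $\phi\in V^{0,1}_2(Q)$; with the Dirichlet-type data at $x=0$ this stays entirely within the weak maximum-principle framework, which is the main advantage of the paper's variant.
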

\begin{proof}
It is clear from \ref{assumption2} that $u_T(x)\geq 0$ for all $x\in [0,L]$. From the HJB equation in system~\eqref{MFG} obviously
$
	\sup_{q\geq 0}\Big\{-q\partial_xu+q\mathrm{P}(t)-\gamma q-\kappa q^2\Big\}\geq 0,
$
hence
$
	-\partial_tu-\sigma^2(x) \partial^2_{xx}u+\lambda u\geq 0.
$
The weak parabolic maximum principle implies that $u(t,x)\geq 0$. Since $u(t,0)=0$ for all $t$ we obtain $\partial_xu(t,0)\geq 0$ for all $t$. From \cref{P bound}, $\mathrm{P}\lc t\rc-\gamma\leq C_P$. Recall that $(\cdot)_+$ is a convex function, hence the Hamiltonian
\begin{equation*}
	H\lc\mathrm{P}\lc t\rc-\gamma-\partial_xu\rc=\kappa \lc \frac{\mathrm{P}\lc t\rc-\gamma-\partial_xu}{2\kappa}\rc_+^2
	\leq \frac{1}{\kappa}\lc \frac{(\mathrm{P}\lc t\rc-\gamma)_++(-\partial_xu)_+}{2}\rc^2
	\leq \frac{C_P^2+(\partial_x u)^2}{2\kappa}.
\end{equation*}
It follows from standard estimates for quasilinear parabolic equations, e.g. \cite[Chapter 5, Theorem 6.3]{MR0241822}, that $\partial_xu\in L^\infty(Q)$ and $u\in W^{1,2}_r(Q)$. The derivative of $H\lc\mathrm{P}\lc t\rc-\gamma-\partial_xu\rc$ w.r.t $x$ in the sense of distributions is
$$
	\lc \frac{\mathrm{P}\lc t\rc-\gamma-\partial_xu}{2\kappa}\rc_+\mathbf{1}_{\{\mathrm{P}\lc t\rc-\gamma-\partial_xu\geq 0\}}(-\partial_{xx}^2u),
$$
where $\mathbf{1}_{\{\mathrm{P}\lc t\rc-\gamma-\partial_xu\geq 0\}}$ denotes an indicator function. We can differentiate \cref{MFG} ($i$), with $\phi=\partial_xu$ being a weak solution to
$$
	\partial_{t}\phi+\sigma^2(x) \partial_{xx}^2\phi+2\sigma^2(x)\sigma'(x)\partial_{x}\phi-\lambda \phi- \lc \frac{\mathrm{P}\lc t\rc-\gamma-\phi}{2\kappa}\rc_+\mathbf{1}_{\{\mathrm{P}\lc t\rc-\gamma-\phi\geq 0\}}\partial_{x}\phi=0.
$$
From $\partial_xu\in L^\infty(Q)$, it is clear that $- \lc \frac{\mathrm{P}\lc t\rc-\gamma-\phi}{2\kappa}\rc_+\mathbf{1}_{\{\mathrm{P}\lc t\rc-\gamma-\phi\geq 0\}}$ is bounded in $L^\infty(Q)$. It is easy to see from $u\in W^{1,2}_r(Q)$ (or by using energy estimates) that $\phi \in V^{0,1}_2(Q)$. From $\phi(T,x)\geq 0$, $\phi(t,L)=0$ and $\phi(t,0)\geq 0$, we can use the parabolic maximum principle for weak solutions (cfr. \cite[Chapter 3, Theorem 7.2, p. 188]{MR0241822}) and obtain $\phi(t,x)\geq 0$ for all $(t,x)\in Q$. From $\partial_xu(t,x)\geq 0$ and \cref{controlrecursive} we obtain \cref{q bound}.\par
Next, {we show $\mathrm{P}(t)> \gamma$} by a contradiction argument. Suppose $\mathrm{P}(\check{t})\leq \gamma$ at some time $\check{t}$. Then from  $\partial_xu(\check{t},x)\geq 0$ we have
$
	\mathrm{P}(\check{t})-\gamma-\partial_xu(\check{t},x)\leq 0.
$
With  \cref{controlrecursive}, we can then obtain ${q^*}(\check{t},x)=0$ for all $x\in [0,L]$. This implies
$$
	P(\check{t},0)=P\lc \check{t},\int_0^L{q^*}(\check{t},x)m(\check{t},x)dx\rc=\mathrm{P}(\check{t}) \leq \gamma.
$$
This is a contradiction as we have assumed $P(t,0)>\gamma$ for all $t\in [0,T]$ in \ref{assumption3}.
\end{proof}
\begin{remark}\label{remark P}
	\cref{monotone} shows that the price $\mathrm{P}(t)$ will never be lower than $\gamma$. It is clear that for $(t,x)$ such that ${q^*}(t,x)>0$,
	$$
		\partial_xu(t,x)=\mathrm{P}(t)-\frac{d\left(\gamma {q^*}+\kappa {q^*}^2\right)}{d{q^*}}.
	$$
	In other words, $\partial_xu$ equals the price minus the marginal production cost. Therefore, $\partial_xu$ can be interpreted as the Hotelling rent in \cite{MR2762362}, an index  widely used for measuring scarcity of resources in the economic literature. Hence, another feature of the model is that the Hotelling rent   is  non negative.
\end{remark}
\begin{remark}\label{remark q}
	From \cref{monotone}, we can in fact replace the constraint $q\geq 0$ in \cref{MFG} ($i$) and ($iii$) by $0\leq q\leq \frac{C_P}{2\kappa}$, 
	$
		{q^*}= \arg \max_{0\leq q\leq \frac{C_P}{2\kappa}} \left\{-q\partial_xu+q\mathrm{P}(t)-\gamma q-\kappa q^2\right\}.
	$
	This implies that the Hamiltonian is globally Lipschitz and therefore allows us to avoid using Bernstein estimates as in \cite{MR4387201}.
\end{remark}\par
The following result is crucial for proving uniqueness of the solution to system \eqref{MFG} and convergence of learning algorithms. It is analogous to \cite[Lemma 2.7]{MR3608094} and \cite[Lemma 2.8]{tang2023learning}. Here, since the admissible set of controls $q$ is compact( cfr. \cref{remark q}) we give a modified proof. The novelty in the proof is that we consider separately the case when the constraints on $q$ are binding. 
\begin{lemma}\label{H-L}
	Let $\Lambda\in \mathbb{R}$, 
	$$
	H(\Lambda)=\sup_{0\leq q\leq \frac{C_P}{2\kappa}}\left\{-q\Lambda-\kappa q^2\right\},\,
	 {q^*}=\arg \max_{0\leq q\leq \frac{C_P}{2\kappa}}\left\{-q\Lambda-\kappa q^2\right\},
	 $$ 
	then for all $q$ such that $0\leq q\leq \frac{C_P}{2\kappa}$,
	\begin{equation}\label{H-Lgeq}
		H(\Lambda)-\Big(-q\Lambda-\kappa q^2\Big)\geq \kappa |{q^*}-q|^2.
	\end{equation}
\end{lemma}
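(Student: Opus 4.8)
The plan is to treat the objective $f(q):=-q\Lambda-\kappa q^2$ as a concave quadratic in $q$ on the compact interval $I:=[0,\frac{C_P}{2\kappa}]$. Since $f''\equiv -2\kappa<0$, the maximizer ${q^*}$ exists and is unique, and the claimed bound \cref{H-Lgeq} is exactly a strong-concavity (quadratic growth) estimate of $f$ around ${q^*}$.

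First I would record the exact second-order expansion of $f$ about ${q^*}$. Because $f$ is a polynomial of degree two, Taylor's formula is exact and gives, for every $q\in I$,
\begin{equation*}
f({q^*})-f(q)=f'({q^*})({q^*}-q)+\kappa({q^*}-q)^2.
\end{equation*}
Since $H(\Lambda)=f({q^*})$ and the left-hand side of \cref{H-Lgeq} is precisely $f({q^*})-f(q)$, the whole statement reduces to proving the single inequality $f'({q^*})({q^*}-q)\ge 0$ for all $q\in I$.

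This last inequality is exactly the first-order optimality (variational inequality) condition for maximizing a concave function over the convex set $I$, and verifying it while tracking whether the constraints are binding is the step flagged as novel. I would argue by the three cases determined by the position of the unconstrained critical point $\hat q:=-\Lambda/(2\kappa)$ relative to $I$. (a) If $\hat q\in I$, then ${q^*}=\hat q$ and $f'({q^*})=0$, so the product vanishes. (b) If $\hat q<0$ (equivalently $\Lambda>0$), then ${q^*}=0$, $f'(0)=-\Lambda<0$, and ${q^*}-q=-q\le 0$, so the product equals $\Lambda q\ge 0$. (c) If $\hat q>\frac{C_P}{2\kappa}$, then ${q^*}=\frac{C_P}{2\kappa}$, $f'({q^*})=-\Lambda-C_P>0$, and ${q^*}-q\ge 0$, so the product is again nonnegative. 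In each case $f'({q^*})({q^*}-q)\ge 0$, which combined with the expansion above yields \cref{H-Lgeq}.

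The only genuine obstacle is the bookkeeping in the boundary cases (b) and (c): one must check that the sign of $f'({q^*})$ and the sign of ${q^*}-q$ always align. This is immediate here because the feasible set is a one-dimensional interval, so ${q^*}-q$ has a fixed sign once a boundary is active (nonpositive when ${q^*}=0$, nonnegative when ${q^*}=\frac{C_P}{2\kappa}$). The argument is thus a clean finite-dimensional instance of the projection/variational-inequality estimate appearing in \cite[Lemma 2.7]{MR3608094} and \cite[Lemma 2.8]{tang2023learning}, with the compactness of the admissible set handled explicitly through the case split.
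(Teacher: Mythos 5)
Your proof is correct and takes essentially the same approach as the paper: both arguments reduce to the same three-case split according to whether the unconstrained maximizer $-\Lambda/(2\kappa)$ lies in $[0,\tfrac{C_P}{2\kappa}]$, falls below $0$ (i.e.\ $\Lambda>0$), or exceeds the upper bound (i.e.\ $\Lambda<-C_P$), checking in the boundary cases that the sign of the constraint violation aligns with the sign of $q^*-q$. The only difference is cosmetic: you package the algebra as an exact second-order Taylor expansion plus the variational inequality $f'(q^*)(q^*-q)\ge 0$, whereas the paper verifies the resulting inequality by direct computation in each case.
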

\begin{proof}
We have: 
$
	H(\Lambda)-\Big(-q\Lambda-\kappa q^2\Big)=\kappa q^2-\kappa {q^*}^2-\big(-\Lambda(q-{q^*})\big).
$\par
If $-C_P\leq \Lambda\leq 0$, then ${q^*}=-\frac{\Lambda}{2\kappa}$, hence \cref{H-Lgeq}. If $\Lambda> 0$, then ${q^*}=0$,
$$
	H(\Lambda)-\Big(-q\Lambda-\kappa q^2\Big)=\kappa q^2+\Lambda q\geq \kappa |0-q|^2.
$$
If $\Lambda< -C_P$, then ${q^*}=\frac{C_P}{2\kappa}$ and $q-{q^*}\leq 0$,  hence $-\Lambda(\frac{C_P}{2\kappa}-q)\geq C_P(\frac{C_P}{2\kappa}-q)$,
$$
	H(\Lambda)-\Big(-q\Lambda-\kappa q^2\Big)=-\Lambda(\frac{C_P}{2\kappa}-q)+\kappa q^2-\kappa \left(\frac{C_P}{2\kappa}\right)^2
	\geq \kappa|\frac{C_P}{2\kappa}-q|^2.
$$
\end{proof}

\section{Uniqueness of the solution}
In this section, we prove uniqueness of the solution to system~\eqref{MFG}. {We use the standard method for system satisfying Lasry-Lions monotonicity conditions, which has been used in \cite[Proposition 2]{bonnans2021schauder} for considering a Cournot MFG of controls with periodic boundary condition.} \par

%%%%%%%%%%%%%%
\begin{theorem}\label{uniqueness}
	Under assumptions \ref{assumption1}, \ref{assumption2} and \ref{assumption3}, system \eqref{MFG} has at most one weak solution
	$(u,m,{q^*})\in \mathbbm{X}$.
\end{theorem}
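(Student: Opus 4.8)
The plan is to run the classical Lasry--Lions monotonicity argument, adapted to the boundary conditions of \eqref{MFG} and to the coupling through the price. Suppose $(u_1,m_1,q_1)$ and $(u_2,m_2,q_2)$ are two weak solutions in $\mathbbm{X}$ (I write $q_\iota$ for $q_\iota^*$ and $w_\iota=q_\iota m_\iota$), set $\bar u=u_1-u_2$, $\bar m=m_1-m_2$, $\bar q=q_1-q_2$, and denote $\mathrm{P}_\iota(t)=P(t,\psi_\iota(t))$ with $\psi_\iota=\int_0^L q_\iota m_\iota\,dx$ and $H_\iota=H(\mathrm{P}_\iota-\gamma-\partial_x u_\iota)$. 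The central object is $g(t):=\int_0^L\bar u\,\bar m\,dx$. Substituting the HJB equation $(i)$ for $\partial_t\bar u$ and the FPK equation $(ii)$ for $\partial_t\bar m$, I would differentiate $g$ and integrate the second-order terms by parts twice in $x$.

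The key structural point is that every boundary contribution cancels. The diffusion terms combine into the pure boundary expression $\big[\bar u\,\partial_x(\sigma^2\bar m)-\sigma^2\bar m\,\partial_x\bar u\big]_0^L$; at $x=0$ it vanishes because $\bar u(t,0)=0$ and $\bar m(t,0)=0$, while at $x=L$ it reduces to $\bar u(t,L)\,\partial_x(\sigma^2\bar m)(t,L)$ since $\partial_x\bar u(t,L)=0$. The flux condition $\partial_x(\sigma^2(L)m_\iota(t,L))+q_\iota(t,L)m_\iota(t,L)=0$ rewrites this as $-\bar u(t,L)(w_1-w_2)(t,L)$, which is exactly cancelled by the boundary term produced when integrating the drift term $\int_0^L\bar u\,\mathrm{div}(w_1-w_2)\,dx$ by parts (again using $\bar u(t,0)=0$). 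What survives is the identity $g'(t)=\lambda g(t)+D(t)$, where $D(t):=-\int_0^L(H_1-H_2)\bar m\,dx-\int_0^L\partial_x\bar u\,(w_1-w_2)\,dx$.

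Next I would show $D(t)\ge 0$. Expanding $D$ and regrouping the terms multiplying $m_1$ and $m_2$ separately, and writing $\partial_x u_\iota=\Lambda_\iota-\gamma+\mathrm{P}_\iota$ with $\Lambda_\iota=\gamma+\partial_x u_\iota-\mathrm{P}_\iota$ and $H_\iota=-q_\iota\Lambda_\iota-\kappa q_\iota^2$, \cref{H-L} applied once with $(\Lambda_2,q_1)$ and once with $(\Lambda_1,q_2)$ gives, after the $\Lambda$-terms cancel, $D(t)\ge \kappa\int_0^L(m_1+m_2)|\bar q|^2\,dx-(\mathrm{P}_1-\mathrm{P}_2)\int_0^L(w_1-w_2)\,dx$. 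Since $\mathrm{P}_\iota$ depends only on $t$, the last integral equals $\psi_1-\psi_2$, so the correction equals $-(P(t,\psi_1)-P(t,\psi_2))(\psi_1-\psi_2)\ge 0$ by the monotonicity $\partial_aP\le 0$ of \ref{assumption3}. Hence $D(t)\ge\kappa\int_0^L(m_1+m_2)|\bar q|^2\,dx\ge 0$.

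Finally I would use both temporal endpoints: $\bar m(0,\cdot)=0$ and $\bar u(T,\cdot)=0$ give $g(0)=g(T)=0$. From $g'=\lambda g+D$ the function $e^{-\lambda t}g(t)$ satisfies $(e^{-\lambda t}g)'=e^{-\lambda t}D\ge 0$, so it is nondecreasing and vanishes at both endpoints, forcing $D\equiv 0$ a.e. Then $\int_0^L(m_1+m_2)|\bar q|^2\,dx=0$, and since \cref{m stability} gives $m_1,m_2>0$ on $[0,T]\times(0,L)$, we obtain $q_1=q_2$ a.e. in $Q$; with a common drift, $m_1$ and $m_2$ solve the same problem \eqref{FPK} and so coincide by the uniqueness in \cref{m stability}, whence $\psi_1=\psi_2$, $\mathrm{P}_1=\mathrm{P}_2$, and $u_1,u_2$ satisfy the same semilinear HJB with identical terminal and boundary data, so they coincide by standard uniqueness (the Hamiltonian is globally Lipschitz, \cref{remark q}). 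I expect the main obstacle to be twofold: arranging the exact cancellation of the fluxes at $x=L$, where the Neumann condition on $u$ and the flux condition on $m$ must precisely offset the drift boundary term, and rigorously justifying the time differentiation of $g$ for merely weak solutions with $\partial_t m\in L^2(0,T;H^{-1}(\mathbb{L}))$, which requires reading $\int_0^L\bar u\,\partial_t\bar m$ as an $H^{-1}$--$H^1$ duality pairing and invoking the usual integration-by-parts-in-time lemma; the discount term $\lambda g$ is then handled cleanly by the integrating-factor monotonicity argument rather than by a naive sign estimate.
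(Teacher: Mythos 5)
Your proposal is correct and follows essentially the same route as the paper: the Lasry--Lions duality computation with the boundary terms cancelling via the Neumann/flux conditions, the quantitative concavity estimate of \cref{H-L} applied symmetrically to produce the coercive term $\kappa\int(m_1+m_2)|q^*_1-q^*_2|^2$, and the monotonicity of $P$ from \ref{assumption3} to kill the price cross-term. The only (immaterial) difference is the endgame: you extract $q^*_1=q^*_2$ directly from the coercive term and then propagate through the FPK and HJB equations, whereas the paper first deduces $\mathrm{P}_1=\mathrm{P}_2$ and concludes $u_1=u_2$ via a Lipschitz bound on the Hamiltonian and the weak maximum principle.
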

\begin{proof}
Suppose there exist two solutions $(u_1,m_1,{q^*_1})$, $(u_2,m_2,{q^*_2})$. Let $v=u_1-u_2$, $\varrho=m_1-m_2$. For $\iota \in \{1,2\}$, we set ${w^*_\iota}={q^*_\iota} m_\iota$ and $\mathrm{P}_\iota(t)=P\big( t,\int_0^L {w^*_\iota} dx\big)$. Hence
$$
	H\big(\partial_xu_\iota-\mathrm{P}_\iota(t)+\gamma\big)=-{q^*_\iota}\partial_xu_\iota+{q^*_\iota} \mathrm{P}_\iota (t)-\gamma {q^*_\iota}-\kappa {q^*_\iota}^2.
$$
From \cref{H-L},   setting $\Lambda=\partial_xu_2-\mathrm{P}_2(t)+\gamma$ and  $\Lambda=\partial_xu_1-\mathrm{P}_1(t)+\gamma$ in \cref{H-Lgeq}, we obtain
\begin{align}\label{H-Lgeq1}
H\big(\partial_xu_2-\mathrm{P}_2(t)+\gamma\big)-\Big(-{q^*_1}\big(\partial_xu_2-\mathrm{P}_2(t)+\gamma\big)-\kappa {q^*_1}^2\Big)\geq \kappa |{q^*_1}-{q^*_2}|^2,\\
\label{H-Lgeq2}
H\big(\partial_xu_1-\mathrm{P}_1(t)+\gamma\big)-\Big(-{q^*_2}\big(\partial_xu_1-\mathrm{P}_1(t)+\gamma\big)-\kappa {q^*_2}^2\Big)\geq \kappa |{q^*_1}-{q^*_2}|^2.
\end{align}
By comparing $(i)$ and $(ii)$ in the systems for $(u_1,m_1,{q^*_1})$ and $(u_2,m_2,{q^*_2})$, we get
{
\begin{equation}\label{v eqn}
\left\{\begin{aligned}
(i)\quad	&\partial_tv+\sigma^2(x) \partial^2_{xx}v -\lambda v+H\big(\partial_xu_1-\mathrm{P}_1(t)+\gamma\big)-H\big(\partial_xu_2-\mathrm{P}_2(t)+\gamma\big)=0,\\
(ii)\quad		&\partial_t\varrho-\partial^2_{xx}\big(\sigma^2(x) \varrho\big)-\partial_x({w^*_1}-{w^*_2})=0.
\end{aligned}\right.
\end{equation}
Multiplying \cref{v eqn} $(i)$ and $(ii)$ by $e^{-\lambda t}\varrho$ and  $e^{-\lambda t}v$ respectively}, adding the resulting equations and integrating over $Q$, we get
\begin{equation*}
	\begin{aligned}
		0={}& \int_Qe^{-\lambda t}\varrho\Big(\partial_tv+\sigma^2(x) \partial^2_{xx}v - \lambda v \Big)dxdt +\int_Qe^{-\lambda t}\varrho \Big(H\big(\partial_xu_1-\mathrm{P}_1(t)+\gamma\big)-H\big(\partial_xu_2-\mathrm{P}_2(t)+\gamma\big)\Big)dxdt \\
		     & +\int_Qe^{-\lambda t}v\Big( \partial_t\varrho-\partial^2_{xx}\big(\sigma^2(x) \varrho\big)-\partial_x({w^*_1}-{w^*_2})\Big)dxdt.
	\end{aligned}
\end{equation*}
Integrating by parts, we obtain
\begin{equation}\label{eq_diff}
	\begin{aligned}
	& -\int_Qe^{-\lambda t}m_1\Big(H\big(\partial_xu_2-\mathrm{P}_2(t)+\gamma \big)-H\big(\partial_xu_1-\mathrm{P}_1(t)+\gamma\big)-q^*_1(\partial_xu_1-\partial_xu_2)\Big)dxdt  \\
		     & -\int_Qe^{-\lambda t}m_2\Big(H\big(\partial_xu_1-\mathrm{P}_1(t)+\gamma \big)-H\big(\partial_xu_2-\mathrm{P}_2(t)+\gamma\big)-q^*_2(\partial_xu_2-\partial_xu_1)\Big)dxdt=0.
	\end{aligned}
\end{equation}
Since
\begin{equation*}
H\big(\partial_xu_1-\mathrm{P}_1(t)+\gamma\big)+q^*_1(\partial_xu_1-\partial_xu_2)
=-q^*_1\big(\partial_xu_2-\mathrm{P}_2(t)+\gamma\big)-\kappa (q^*_1)^2+q^*_1\big(\mathrm{P}_1(t)-\mathrm{P}_2(t)\big),
\end{equation*}
we have
\begin{equation}\label{P1-P2}
	\begin{aligned}
		    & \int_Qe^{-\lambda t}m_1\Big(H\big(\partial_xu_2-\mathrm{P}_2(t)+\gamma\big)-H\big(\partial_xu_1-\mathrm{P}_1(t)+\gamma\big)-{q^*_1}(\partial_xu_1-\partial_xu_2)\Big)dxdt \\
		={} & \int_Qe^{-\lambda t}m_1\Big(H\left(\partial_xu_2-\mathrm{P}_2(t)+\gamma\right)-\big(-{q^*_1}(\partial_xu_2-\mathrm{P}_2(t)+\gamma)-\kappa {q^*_1}^2\big)\Big)dxdt       \\
		    & -\int_0^Te^{-\lambda t}\Big(\big(\int_0^L{w^*_1}dx\big)\big(\mathrm{P}_1(t)-\mathrm{P}_2(t)\big)\Big)dt.
	\end{aligned}
\end{equation}
Likewise
\begin{equation}\label{P2-P1}
	\begin{aligned}
		    & \int_Qe^{-\lambda t}m_2\Big(H\big(\partial_xu_1-\mathrm{P}_1(t)+\gamma\big)-H\big(\partial_xu_2-\mathrm{P}_2(t)+\gamma\big)-{q^*_2}(\partial_xu_2-\partial_xu_1)\Big)dxdt \\
		={} & \int_Qe^{-\lambda t}m_2\Big(H\big(\partial_xu_1-\mathrm{P}_1(t)+\gamma\big)-\big(-{q^*_2}(\partial_xu_1-\mathrm{P}_1(t)+\gamma)-\kappa {q^*_2}^2\big)\Big)dxdt       \\
		    & -\int_0^Te^{-\lambda t}\Big(\big(\int_0^L{w^*_2}dx\big)\big(\mathrm{P}_2(t)-\mathrm{P}_1(t)\big)\Big)dt.
	\end{aligned}
\end{equation}

Combining \cref{H-Lgeq1}-\cref{H-Lgeq2} with \cref{P1-P2} and \cref{P2-P1}, and recalling the definition  of $\mathrm{P}_1(t)$ and $\mathrm{P}_2(t)$, by \cref{eq_diff}  we obtain, denoting $\psi_\iota(t)$ as in \cref{def w},
\begin{equation}
 \kappa \int_Qe^{-\lambda t}(m_1+m_2)|{q^*_1}-{q^*_2}|^2dxdt                                                                                                                                                    
\leq  \int_0^Te^{-\lambda t}\big(\psi_1(t)-\psi_2(t)\big)\big(\mathrm{P}_1(t)-\mathrm{P}_2(t)\big)dt,
\end{equation}
{hence
$
 \big(\psi_1(t)-\psi_2(t)\big)\big(\mathrm{P}_1(t)-\mathrm{P}_2(t)\big) \geq 0                                                         
$
for all $t\in [0,T]$. Meanwhile 
$
 \big(\psi_1(t)-\psi_2(t)\big)\big(\mathrm{P}_1(t)-\mathrm{P}_2(t)\big) \leq 0                                                      
$
from \ref{assumption3}. Therefore $\mathrm{P}_1(t)=\mathrm{P}_2(t)$ a.e. in $[0,T]$. Moreover, it follows from \cref{q bound} and $\partial_xu_\iota \geq 0$ that 
$
0\leq \lc \mathrm{P}_\iota\lc t\rc-\gamma-\partial_xu_\iota\rc_+\leq C_P
$.
From \cref{H def} and $\mathrm{P}_1(t)=\mathrm{P}_2(t)$ a.e. we have
$$
\left\vert H\big(\partial_xu_1-\mathrm{P}_1(t)+\gamma\big)-H\big(\partial_xu_2-\mathrm{P}_2(t)+\gamma\big)\right\vert 
\leq  \frac{C_P\vert \partial_xu_1-\partial_xu_2\vert}{2\kappa},
$$
and therefore by \cref{v eqn} $(i)$,
$$
-\frac{C_P\vert \partial_xv\vert}{2\kappa} \leq -\partial_tv-\sigma^2(x) \partial^2_{xx}v +\lambda v \leq \frac{C_P\vert \partial_xv\vert}{2\kappa}.
$$
From weak maximum principle, we obtain $u_1(t,x)=u_2(t,x)$ for all $(t,x)\in Q$. With $\mathrm{P}_1(t)=\mathrm{P}_2(t)$ a.e. we obtain ${q^*_1}={q^*_2}$ a.e. in $Q$.}
\end{proof}

\section{Learning algorithm  and existence of the solution}\label{sec:learning}
We introduce the \textit{smoothed policy iteration algorithm} (SPI for short), motivated by the   algorithm (SPI1)  introduced in \cite{tang2023learning}. As we have shown that system~\eqref{MFG} has at most one solution in \cref{uniqueness}, we prove the global convergence of the algorithm and this shows the existence of a solution to the system.\\
Given $\bar{q}^{(0)}\in \mathcal C^2(Q)$, iterate for each $n\geq 0$: 
\begin{itemize}
	\item[(i)] \textit{Generate the distribution from the smoothed policy}. Solve
	      \begin{equation}\label{m update 1}
		      \left\{\begin{aligned}
			      \qquad & \partial_t m^{(n)}-\partial^2_{xx}\big(\sigma^2(x)  m^{(n)}\big)-\text{div} (m^{(n)} \bar{q}^{(n)})=0,        \\
			              \qquad & m^{(n)}(t,0)=0,\, \partial_x\big(\sigma^2(L) m^{(n)}(t,L)\big)+\bar{q}^{(n)}(t,L)m^{(n)}(t,L)=0,         \\
			              \qquad & m^{(n)}(0,x)=m_0(x).
		      \end{aligned}\right.
	      \end{equation}
	\item[(ii)] \textit{Price update}. Set
	      \begin{equation}\label{price update}
		      \mathrm{P}^{(n)}(t)=P\big(t,\int_0^L \bar{w}^{(n)}(t,x)dx \big),\,\,\bar{w}^{(n)}=\bar{q}^{(n)}m^{(n)} \quad \forall t\in [0,T].
	      \end{equation}

	\item[(iii)] \textit{Policy evaluation}. Solve
	      \begin{equation}\label{u update 1}
		      \left\{\begin{aligned}
			       \qquad & \partial_t u^{(n)}+\sigma^2(x) \partial^2_{xx}u^{(n)}-\lambda u^{(n)}-\bar{q}^{(n)} \partial_xu^{(n)}
			    +\bar{q}^{(n)} \mathrm{P}^{(n)}(t)-\gamma \bar{q}^{(n)}-\kappa\big(\bar{q}^{(n)}\big)^2=0,        \\
			       & u^{(n)}(t,0)=0,\,\partial_xu^{(n)}(t,L)=0, \,u^{(n)}(T,x)=u_T(x).
		      \end{aligned}\right.
	      \end{equation}
	\item[(iv)] \textit{Policy update}.
	      \begin{equation}\label{q update 1}
		      q^{(n+1)}(t,x)=\min \left\{\lc\frac{\mathrm{P}^{(n)}(t)-\gamma-\partial_xu^{(n)}(t,x)}{2\kappa}\rc_+,\frac{C_P}{2\kappa}\right\}.
	      \end{equation}
	\item[(v)] \textit{Policy smoothing}
	      \begin{equation}\label{average}
		      \bar{q}^{(n+1)}=(1-\zeta_n)\bar{q}^{(n)}+\zeta_nq^{(n+1)}, \quad  \forall (t,x)\in Q.
	      \end{equation}
\end{itemize}
In the following, we assume

\begin{equation}\label{assumption-learning-rates}
	\zeta_n=\frac{\beta}{n+\beta}, \,\,\beta\in \mathbb{N}\,\, \text{for all}\,\, n \ge 0.
	\end{equation}
For $\beta=1$ policy smoothing \cref{average} means $\bar{q}^{(n)}=\frac{1}{n}\sum_{k=1}^n q^{(k)}$. For $\beta=2$, \cref{average} becomes
	$$
		\bar{q}^{(n)}=\frac{1}{\sum_{k=0}^{n}k}\sum_{k=0}^n kq^{(k)}
		= %\frac{0}{\frac{1}{2}n(n+1)}q^{(0)}+
		\frac{1}{\frac{1}{2}n(n+1)}q^{(1)}+\frac{2}{\frac{1}{2}n(n+1)}q^{(2)}+\cdots+\frac{n}{\frac{1}{2}n(n+1)}q^{(n)}.
	$$
	i.e. one puts larger weights on more recent observations of $q^{(n)}$ , cfr. \cite[p. 191]{deschamps1975algorithm} and \cite[Remark 3.1]{tang2023learning}.

\begin{remark}
	The rationale of updating policies only in the compact set $0\leq q\leq \frac{C_P}{2\kappa}$ is due to \cref{monotone} and \cref{remark q}.
\end{remark}

\begin{lemma}\label{Solution u m1}
	For each $n\geq 1$, there exists a unique solution $\big(u^{(n)},m^{(n)}\big)$, 
	$$
	u^{(n)}  \in W^{1,2}_r(Q),\,\,m^{(n)}\in L^2(0,T;H^1(\mathbb{L}))\cap L^\infty(Q),\,\,\partial_tm^ {(n)}\in L^2\big(0,T;H^{-1}(\mathbb{L})\big)
        $$
	to the system \cref{m update 1}-\cref{u update 1} such that
	$
		\|u^{(n)}\| _{W^{1,2}_r(Q)}+\|m^{(n)}\|_{C^{\alpha/2,\alpha}(Q)}\leq C,
	$
	where $C$ is independent of $n$.
\end{lemma}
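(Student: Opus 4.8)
The plan is to decouple the two equations and treat them sequentially, since the system \cref{m update 1}--\cref{u update 1} is \emph{linear} once the policy $\bar{q}^{(n)}$ is fixed. The crucial structural observation is that $\bar{q}^{(n)}$ is a convex combination of policies $q^{(k)}$ each satisfying $0\leq q^{(k)}\leq \frac{C_P}{2\kappa}$ by the truncation \cref{q update 1}; hence by convexity $0\leq \bar{q}^{(n)}(t,x)\leq \frac{C_P}{2\kappa}$ for every $n$, with a bound $R:=\frac{C_P}{2\kappa}$ that is \emph{independent of $n$}. This uniform control on the drift is what will ultimately make all constants independent of $n$, and it should be stated first.

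First I would handle the Fokker--Planck equation \cref{m update 1}. Since $\bar{q}^{(n)}$ is a fixed, bounded, nonnegative drift with $0\leq \bar{q}^{(n)}\leq R$, this is precisely the setting of \cref{m stability}: I would invoke it directly to get existence and uniqueness of $m^{(n)}\in L^2(0,T;H^1(\mathbb{L}))\cap L^\infty(Q)$ with $\partial_t m^{(n)}\in L^2(0,T;H^{-1}(\mathbb{L}))$, together with the H\"older bound $\|m^{(n)}\|_{\mathcal C^{\alpha/2,\alpha}(Q)}\leq C$. The point to emphasize is that the constant $C_m$ from \cite[Proposition 2.2]{MR3888965} and the subsequent H\"older constant depend only on $m_0$, $L$, $T$ and the drift bound $R$ --- all of which are fixed across iterations --- so this part of the estimate is automatically $n$-independent.

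Next I would treat the policy-evaluation equation \cref{u update 1}, which is a linear backward parabolic equation for $u^{(n)}$ with a first-order coefficient $-\bar{q}^{(n)}$ bounded by $R$ and a source term $\bar{q}^{(n)}\mathrm{P}^{(n)}(t)-\gamma\bar{q}^{(n)}-\kappa(\bar{q}^{(n)})^2$. Here I would first note that the source is bounded in $L^\infty(Q)$ uniformly in $n$: indeed $\mathrm{P}^{(n)}(t)=P(t,\int_0^L \bar{w}^{(n)}dx)$ with $0\leq \bar{w}^{(n)}=\bar{q}^{(n)}m^{(n)}\leq R\,\|m^{(n)}\|_{L^\infty}$, and by the monotonicity and bound \cref{P bound} of $P$ we get $\gamma\leq \mathrm{P}^{(n)}(t)\leq C_P+\gamma$; combined with $0\leq \bar{q}^{(n)}\leq R$ this bounds the whole zeroth-order source uniformly. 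With a bounded drift and bounded source, and terminal/boundary data $u_T\in\mathcal C^3$, $u^{(n)}(t,0)=0$, $\partial_x u^{(n)}(t,L)=0$ fixed, the $L^p$-theory for linear parabolic equations (the same \cite[Chapter 5, Theorem 6.3]{MR0241822} / \cref{Sobolev} machinery used in \cref{monotone}) yields a unique $u^{(n)}\in W^{1,2}_r(Q)$ with $\|u^{(n)}\|_{W^{1,2}_r(Q)}\leq C$, the constant depending only on $R$, $C_P$, $\gamma$, $\kappa$, $\sigma$, $u_T$, $T$, $L$ and $r$, hence independent of $n$.

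The main obstacle is not existence (which is standard linear theory) but verifying that \emph{every} constant is genuinely uniform in $n$; the whole argument hinges on the single a priori fact $0\leq\bar{q}^{(n)}\leq R$ propagated through \cref{average}. I would make this the load-bearing step and check carefully that the $L^\infty$ bound on $m^{(n)}$ feeding into $\mathrm{P}^{(n)}$, and thence into the source of the $u$-equation, does not secretly depend on $n$ --- it does not, precisely because $\|m^{(n)}\|_{L^\infty(Q)}\leq C_m$ is itself $n$-independent by the first step. Assembling the two bounds gives $\|u^{(n)}\|_{W^{1,2}_r(Q)}+\|m^{(n)}\|_{\mathcal C^{\alpha/2,\alpha}(Q)}\leq C$ with $C$ independent of $n$, completing the proof.
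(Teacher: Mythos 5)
Your proposal is correct and follows essentially the same route as the paper: the paper's proof simply observes that $0\leq q^{(n)}\leq \frac{C_P}{2\kappa}$ implies $0\leq \bar q^{(n)}\leq \frac{C_P}{2\kappa}$, then invokes \cref{m stability} for the Fokker--Planck part and standard parabolic estimates for the (linear) policy-evaluation equation. Your write-up is just a more explicit unpacking of the same argument, correctly identifying the uniform drift bound as the load-bearing step.
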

\begin{proof}
From $0\leq q^{(n)}\leq \frac{C_P}{2\kappa}$, we also have $0\leq \bar{q}^{(n)}\leq \frac{C_P}{2\kappa}$. Then the result follows by using \cref{m stability} and standard parabolic estimates.
\end{proof}

\begin{lemma}\label{C/n bound algo1}
	There exists a constant $C$, such that for all $n\geq 1$,
	\begin{equation}
			    \| \bar q^{(n+1)}- \bar q^{(n)}\|_{L^\infty(Q)}+\| q^{(n+1)}- q^{(n)}\|_{L^\infty(Q)}           
			+ \| m^{(n+1)}-m^{(n)}\|_{L^\infty(0,T;L^2(\mathbb{L}))}+\| u^{(n+1)}-u^{(n)}\| _{W^{1,2}_r(Q)}\leq \frac{C}{n}.
	\end{equation}
\end{lemma}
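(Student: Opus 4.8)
The plan is to establish the four increments in the order imposed by the data flow of the algorithm, propagating a $C/n$ bound through each step using the Lipschitz dependence of that step on the previous iterate. The starting point is essentially free: from the smoothing rule \cref{average} we have $\bar q^{(n+1)}-\bar q^{(n)}=\zeta_n\big(q^{(n+1)}-\bar q^{(n)}\big)$, and since both $q^{(n+1)}$ and $\bar q^{(n)}$ take values in the compact set $[0,C_P/(2\kappa)]$,
\[
\|\bar q^{(n+1)}-\bar q^{(n)}\|_{L^\infty(Q)}\le \zeta_n\,\frac{C_P}{2\kappa}\le \frac{\beta}{n}\,\frac{C_P}{2\kappa},
\]
using $\zeta_n=\beta/(n+\beta)\le\beta/n$ from \cref{assumption-learning-rates}. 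This is the only place the learning rate enters and is what forces the $1/n$ rate.

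Next I would transport this bound through the Fokker--Planck equation. Since $m^{(n)}$ and $m^{(n+1)}$ solve \cref{m update 1} with drifts $\bar q^{(n)}$ and $\bar q^{(n+1)}$, the stability estimate of \cref{m stability} gives $\|m^{(n+1)}-m^{(n)}\|_{L^\infty(0,T;L^2(\mathbb{L}))}\le C_\varrho\|\bar q^{(n+1)}-\bar q^{(n)}\|_{L^\infty(Q)}\le C/n$. I then control the price increment by writing $\psi^{(n)}(t)=\int_0^L\bar q^{(n)}m^{(n)}\,dx$ and splitting the integrand as $(\bar q^{(n+1)}-\bar q^{(n)})m^{(n+1)}+\bar q^{(n)}(m^{(n+1)}-m^{(n)})$; the first piece is bounded using $\|m^{(n+1)}(t,\cdot)\|_{L^1(\mathbb{L})}\le 1$ (from \cref{m L1}), the second by Cauchy--Schwarz together with the $L^2$-bound just obtained, yielding $\|\psi^{(n+1)}-\psi^{(n)}\|_{L^\infty(0,T)}\le C/n$. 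As $P(t,\cdot)$ is $\mathcal C^2$ by \ref{assumption3} and the arguments $\psi^{(n)}(t)$ stay in the fixed interval $[0,C_P/(2\kappa)]$, $P$ is Lipschitz there and $\|\mathrm P^{(n+1)}-\mathrm P^{(n)}\|_{L^\infty(0,T)}\le C/n$.

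The main work is the estimate for $u$. Setting $d^{(n)}=u^{(n+1)}-u^{(n)}$ and subtracting the two instances of \cref{u update 1}, I would rewrite the transport term as $-\bar q^{(n+1)}\partial_x u^{(n+1)}+\bar q^{(n)}\partial_x u^{(n)}=-\bar q^{(n+1)}\partial_x d^{(n)}-(\bar q^{(n+1)}-\bar q^{(n)})\partial_x u^{(n)}$, so that $d^{(n)}$ solves a linear parabolic equation with the same principal part, bounded drift $\bar q^{(n+1)}$, zero terminal data (both $u^{(n+1)},u^{(n)}$ share the datum $u_T$), and the homogeneous boundary conditions of \cref{u update 1}. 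Its right-hand side collects $(\bar q^{(n+1)}-\bar q^{(n)})\partial_x u^{(n)}$, the price term $\bar q^{(n+1)}\mathrm P^{(n+1)}-\bar q^{(n)}\mathrm P^{(n)}$, and the cost differences $\gamma(\bar q^{(n+1)}-\bar q^{(n)})$ and $\kappa\big((\bar q^{(n+1)})^2-(\bar q^{(n)})^2\big)$. Each is $O(1/n)$ in $L^\infty(Q)$, hence in $L^r(Q)$; the delicate term is $(\bar q^{(n+1)}-\bar q^{(n)})\partial_x u^{(n)}$, which requires a bound on $\partial_x u^{(n)}$ uniform in $n$, obtained from the uniform $W^{1,2}_r(Q)$ estimate of \cref{Solution u m1} combined with the embedding \cref{Sobolev}. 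Standard $L^r$ parabolic regularity for the $d^{(n)}$ equation then yields $\|u^{(n+1)}-u^{(n)}\|_{W^{1,2}_r(Q)}\le C/n$.

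Finally, the policy increment follows from the Lipschitz continuity of the update map in \cref{q update 1}: the composition of $q\mapsto\min\{(\cdot)_+,C_P/(2\kappa)\}$ with the affine map is Lipschitz with constant $1/(2\kappa)$, and $q^{(n+1)}$, $q^{(n)}$ are built from $(\mathrm P^{(n)},\partial_x u^{(n)})$ and $(\mathrm P^{(n-1)},\partial_x u^{(n-1)})$ respectively, so
\[
\|q^{(n+1)}-q^{(n)}\|_{L^\infty(Q)}\le \frac{1}{2\kappa}\Big(\|\mathrm P^{(n)}-\mathrm P^{(n-1)}\|_{L^\infty(0,T)}+\|\partial_x u^{(n)}-\partial_x u^{(n-1)}\|_{L^\infty(Q)}\Big).
\]
The first term is $O(1/n)$ by the price estimate and the second is $O(1/n)$ by applying \cref{Sobolev} to the $u$-difference bound at the previous index; for $n=1$ the inequality holds trivially from the uniform bounds of \cref{Solution u m1} after enlarging $C$. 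Summing the four contributions gives the claim. The only genuine difficulty is the $u$-estimate, specifically ensuring the source term is $O(1/n)$ in $L^r$ and invoking parabolic regularity with a drift that is merely $L^\infty$; everything else is a mechanical Lipschitz propagation of the rate produced by the smoothing step.
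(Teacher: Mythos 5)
Your proposal is correct and follows essentially the same route as the paper: the smoothing rule gives $\|\bar q^{(n+1)}-\bar q^{(n)}\|_{L^\infty(Q)}=O(\zeta_n)=O(1/n)$, this is propagated through the Fokker--Planck stability estimate of \cref{m stability} to $m$, then to $\psi$ and $\mathrm P$ via the Lipschitz continuity of $P$ from \ref{assumption3}, then to $u$ by $L^r$ parabolic regularity for the difference equation (using the uniform gradient bound from \cref{Solution u m1} and \cref{Sobolev}), and finally to $q^{(n+1)}-q^{(n)}$ through the $1/(2\kappa)$-Lipschitz update map. Your explicit handling of the index shift in the last step and of the $n=1$ case is slightly more careful than the paper's, but the argument is the same.
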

\begin{proof}
{From \cref{q update 1,average}, we have
\begin{equation}\label{qn+1-qn}
\bar{q}^{(n+1)}-\bar{q}^{(n)}=\zeta_n(q^{(n+1)}-\bar{q}^{(n)}),
\end{equation}}
hence
\begin{equation}\label{estim_q}
	\| \bar q^{(n+1)}- \bar q^{(n)}\|_{L^\infty(Q)}\leq \frac{C_P}{\kappa n},\,\,\| \bar q^{(n+1)}+ \bar q^{(n)}\|_{L^\infty(Q)}\leq \frac{C_P}{\kappa},\,\,\| (\bar q^{(n+1)})^2- (\bar q^{(n)})^2\|_{L^\infty(Q)}\leq \frac{C^2_{P}}{\kappa^2 n}.
\end{equation}
We denote
$
	\delta u^{(n+1)}=u^{(n+1)}-u^{(n)},\,\,\delta m^{(n+1)}=m^{(n+1)}-m^{(n)}.
$
From
$$
	\partial_t\delta m^{(n+1)}-\partial^2_{xx}\big(\sigma^2(x) \delta m^{(n+1)}\big)-{\rm{div}}(\bar q^{(n)}\delta m^{(n+1)})={\rm{div}}((\bar q^{(n+1)}-\bar q^{(n)})m^{(n+1)})
$$
and \cref{m stability}, it follows that $\| m^{(n+1)}-m^{(n)}\|_{L^\infty(0,T;L^2(\mathbb{L}))}\leq C/n$. This with \cref{price update} and \cref{estim_q} imply 
$ \|\bar{w}^{(n+1)}-\bar{w}^{(n)}\|_{L^\infty(0,T;L^2(\mathbb{L}))}\leq C/n$. We can obtain from \ref{assumption3}
\begin{equation*}
\sup_{t\in [0,T]}\left\vert \mathrm{P}^{(n+1)}(t)-\mathrm{P}^{(n)}(t) \right\vert \leq \frac{C}{n}\,\text{and}\,
	\sup_{(t,x)\in Q}\left\vert \bar q^{(n+1)} \mathrm{P}^{(n+1)}(t)-\bar q^{(n)}\mathrm{P}^{(n)}(t) \right\vert \leq \frac{C}{n}.
\end{equation*}
From
\begin{align*}
		    & \partial_t \delta u^{(n+1)}+\sigma^2(x) \partial^2_{xx}\delta u^{(n+1)}-\bar{q}^{(n+1)}   \partial_x \delta u^{(n+1)}                                             \\
		={} & -(\bar{q}^{(n+1)}-\bar{q}^{(n)})\partial_x  u^{(n)}+\bar{q}^{(n)} P\big(t,\int_0^L \bar{w}^{(n)}dx \big)-\bar{q}^{(n+1)} P\big(t,\int_0^L \bar{w}^{(n+1)}dx \big) \\
		{}  & + \Big(\gamma \bar{q}^{(n+1)}+\kappa\big(\bar{q}^{(n+1)}\big)^2-\gamma \bar{q}^{(n)}-\kappa\big(\bar{q}^{(n)}\big)^2\Big),
\end{align*}
and \cref{estim_q}, we get
$\|\delta u^{(n+1)}\| _{W^{1,2}_r(Q)}\leq C/n$ with the standard parabolic estimates. From Sobolev embedding \cref{Sobolev}, $\|\partial_x u^{(n+1)}-\partial_xu^{(n)}\| _{L^\infty(Q)}\leq C/n$. We then obtain
$$
\| q^{(n+1)}- q^{(n)}\|_{L^\infty(Q)}\leq \frac{\sup_{t}\left\vert \mathrm{P}^{(n)}(t)-\mathrm{P}^{(n-1)}(t)\right\vert+ \|\partial_x u^{(n)}-\partial_xu^{(n-1)}\| _{L^\infty(Q)}}{2\kappa} \leq  \frac{C}{n}
 $$   
 with \cref{q update 1}.                                                                                                                                                                                                       
\end{proof}
%%%%
The following Lemma plays a pivotal role in the convergence analysis of Fictitious Play for mean field games. Its proof can be found in \cite[Lemma 2.7]{MR3608094}.
\begin{lemma}\label{a_n}
	Consider a sequence of positive real numbers $\{ a_n \}_{n \in \N}$ such that $\sum_{n=1}^{\infty} \frac{a_n}{n} < +\infty$. Then we have ${\lim_{N \to \infty} \frac{1}{N} \sum_{n=1}^{N} a_n = 0}$.
	In addition, if there is a constant $C >0 $ such that $\abs a_n - a_{n+1} \abs < C/n$, then $\lim_{n \to \infty} a_n = 0$.
\end{lemma}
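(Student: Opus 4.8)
The plan is to treat the two assertions separately, since the first is a Kronecker-type summation fact and the second upgrades the resulting Ces\`aro convergence to genuine convergence using the discrete Lipschitz bound $|a_n-a_{n+1}|<C/n$.

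For the first part I would write $S_N=\sum_{n=1}^N a_n$ and fix $\varepsilon>0$. Since $\sum_n a_n/n$ converges, I can choose $M$ so that $\sum_{n>M}a_n/n<\varepsilon$. Splitting $S_N/N=\frac1N\sum_{n\le M}a_n+\frac1N\sum_{M<n\le N}a_n$, the first block tends to $0$ as $N\to\infty$ because its numerator is fixed, while for the second block I would use that $a_n=n\cdot(a_n/n)\le N\cdot(a_n/n)$ for every $n\le N$ (as $a_n\ge0$), giving $\frac1N\sum_{M<n\le N}a_n\le\sum_{M<n\le N}a_n/n<\varepsilon$. Hence $\limsup_N S_N/N\le\varepsilon$, and letting $\varepsilon\downarrow0$ yields $S_N/N\to0$.

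For the second part I would argue by contradiction: suppose $a_n\not\to0$, so there exist $\varepsilon>0$ and a subsequence $(n_k)$ with $a_{n_k}\ge\varepsilon$. The Lipschitz bound controls oscillation across a short window, since for $n_k\le m\le(1+\eta)n_k$ one telescopes $|a_m-a_{n_k}|\le\sum_{j=n_k}^{m-1}C/j\le C(m-n_k)/n_k\le C\eta$. Choosing $\eta$ small enough that $C\eta<\varepsilon/2$ forces $a_m\ge\varepsilon/2$ on the whole window, which contains of order $\eta n_k$ indices. Summing, with $N_1=n_k$ and $N_2=\lfloor(1+\eta)n_k\rfloor$, I get $S_{N_2}-S_{N_1}\ge\frac{\varepsilon}{2}(N_2-N_1)$, so $(S_{N_2}-S_{N_1})/n_k$ is bounded below by a fixed positive constant of order $\varepsilon\eta$. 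On the other hand $S_{N_2}/n_k=(S_{N_2}/N_2)(N_2/n_k)\to0$ and $S_{N_1}/n_k\to0$ by the first part, so this quantity tends to $0$ along $(n_k)$ --- a contradiction. Therefore $a_n\to0$.

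The telescoping estimate and the tail splitting are routine; the delicate point, and the only place the two hypotheses genuinely interact, is the choice of window in the second part. It must be short enough (proportional to $n_k$ with a small ratio $\eta$) that the accumulated increments $\sum C/j$ stay below $\varepsilon/2$ and keep $a_m$ away from $0$, yet long enough (of order $\eta n_k$ terms) that its contribution to $S_N$ is comparable to $N$ itself and hence visible to the Ces\`aro average. Balancing these two requirements through a fixed multiplicative window $[n_k,(1+\eta)n_k]$ is the crux; everything else is bookkeeping.
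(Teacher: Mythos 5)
Your proof is correct. Note that the paper does not actually prove this lemma --- it only cites \cite[Lemma 2.7]{MR3608094} --- and your argument is essentially the standard one given there: the same tail-splitting for the Ces\`aro statement, and the same multiplicative window $[n_k,(1+\eta)n_k]$ on which the discrete Lipschitz bound keeps $a_m\ge\varepsilon/2$. The only (harmless) variation is in how you extract the contradiction in the second part: you bound $S_{N_2}-S_{N_1}$ from below and play it against the already-established convergence $S_N/N\to 0$, whereas the reference's argument lower-bounds $\sum_{m=n_k}^{\lfloor(1+\eta)n_k\rfloor} a_m/m$ by roughly $\tfrac{\varepsilon}{2}\log(1+\eta)$ and contradicts the Cauchy criterion for the convergent series $\sum_n a_n/n$ directly; both routes are equally valid and of the same length.
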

%%%%
Next, we state the convergence result.
\begin{theorem}\label{Maintheorem1} Under Assumption \ref{Main assumptions} and let $\zeta_n$ be defined by \cref{assumption-learning-rates}, the family $\{ (u^{(n)} , m^{(n)}, q^{(n)}) \}$, $n \in \N$, given by the algorithm (SPI) converges to the unique weak solution of the second order MFG system \eqref{MFG}.
\end{theorem}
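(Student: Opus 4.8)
The plan is to combine a compactness argument with the potential structure of \cref{J} and the pointwise improvement estimate of \cref{H-L}, and then to upgrade subsequential convergence to full convergence via the uniqueness result. By \cref{Solution u m1} the iterates are uniformly bounded: $u^{(n)}$ in $W^{1,2}_r(Q)$, $m^{(n)}$ in $\mathcal C^{\alpha/2,\alpha}(Q)$, and $\bar{q}^{(n)}, q^{(n)}$ in $L^\infty(Q)$ with values in $[0,\frac{C_P}{2\kappa}]$ (\cref{monotone}, \cref{remark q}). Using the compact Sobolev embedding \cref{Sobolev}, Arzel\`a--Ascoli for $m^{(n)}$, and weak compactness in $W^{1,2}_r(Q)$ for $u^{(n)}$, every subsequence admits a further subsequence along which $(u^{(n)},m^{(n)},q^{(n)},\bar{q}^{(n)})$ converges to a limit $(u^*,m^*,q^*,\bar{q}^*)$. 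The goal is to show that any such limit is a weak solution of \cref{MFG}; since \cref{uniqueness} guarantees at most one weak solution in $\mathbbm{X}$, this forces the whole family to converge.

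First I would introduce the improvement gap. Applying \cref{H-L} with $\Lambda=\partial_xu^{(n)}-\mathrm{P}^{(n)}(t)+\gamma$ (legitimate because \cref{monotone} confines the controls to $[0,\frac{C_P}{2\kappa}]$), the policy-evaluation equation \cref{u update 1} and the greedy update \cref{q update 1} give the pointwise bound
$$
H\big(\partial_xu^{(n)}-\mathrm{P}^{(n)}(t)+\gamma\big)-\Big(-\bar{q}^{(n)}\partial_xu^{(n)}+\bar{q}^{(n)}\mathrm{P}^{(n)}(t)-\gamma\bar{q}^{(n)}-\kappa(\bar{q}^{(n)})^2\Big)\ge \kappa\,|q^{(n+1)}-\bar{q}^{(n)}|^2 .
$$
I would then set $a_n:=\kappa\int_Q e^{-\lambda t}\,m^{(n)}\,|q^{(n+1)}-\bar{q}^{(n)}|^2\,dx\,dt\ge 0$, which measures how far the current smoothed policy is from its own greedy best response. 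The increment bounds of \cref{C/n bound algo1} directly yield $|a_n-a_{n+1}|\le C/n$.

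The heart of the argument is to use the potential $J$ of \cref{J} as a Lyapunov functional. The pairs $(\bar{q}^{(n)},m^{(n)})$ are feasible for \cref{m update 1}, so $J(\bar{q}^{(n)},m^{(n)})$ is well defined and bounded along the iteration. Using the concavity of $\Phi(t,\cdot)$ in the aggregate-production variable (which is \ref{assumption3}, $\partial_a P=\partial^2_{aa}\Phi\le 0$) together with the smoothing identity \cref{qn+1-qn}, namely $\bar{q}^{(n+1)}-\bar{q}^{(n)}=\zeta_n(q^{(n+1)}-\bar{q}^{(n)})$, I expect to derive an almost-monotonicity inequality of the form
$$
J(\bar{q}^{(n+1)},m^{(n+1)})-J(\bar{q}^{(n)},m^{(n)})\ge \zeta_n\, a_n-\frac{C}{n^2},
$$
where the $O(\zeta_n^2)=O(1/n^2)$ remainder absorbs the quadratic-in-$\zeta_n$ terms and the discrepancy between $m^{(n)}$ and $m^{(n+1)}$, all controlled by \cref{C/n bound algo1}. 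Summing this telescoping inequality and using that $J$ is bounded gives $\sum_n \zeta_n a_n<+\infty$, hence $\sum_n a_n/n<+\infty$ since $\zeta_n=\frac{\beta}{n+\beta}$. With the increment bound $|a_n-a_{n+1}|\le C/n$, \cref{a_n} then yields $a_n\to 0$. Because $m^{(n)}\to m^*$ with $m^*>0$ in the interior of $Q$ (\cref{m stability}), this forces $q^{(n+1)}-\bar{q}^{(n)}\to 0$ in $L^2$ on compact subsets, so in the limit the greedy update coincides with the evaluated policy, i.e. $q^*=\bar{q}^*$ satisfies \cref{controlrecursive}. Passing to the limit in \cref{m update 1}, \cref{u update 1} and \cref{price update} (justified by the established convergences and \ref{assumption3}) shows that $(u^*,m^*,q^*)$ is a weak solution of \cref{MFG}, and \cref{uniqueness} upgrades this to convergence of the entire family.

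I expect the main obstacle to be establishing the almost-monotonicity inequality for $J$ in the presence of the nonlinear price coupling. Unlike the potential MFG of \cite{tang2023learning}, where the coupling enters through a fixed local term, here the price $\mathrm{P}^{(n)}(t)=P\big(t,\int_0^L\bar{q}^{(n)}m^{(n)}dx\big)$ depends on the product $\bar{q}^{(n)}m^{(n)}$, so the change in the $\Phi$-term across one averaging step must be expanded carefully, its first-order part matched against the Hamiltonian gap $a_n$ through the monotonicity of $P(t,\cdot)$, and its remainder controlled uniformly in $n$. This is precisely where the $O(1/n)$ estimates of \cref{C/n bound algo1} and the compactness of the admissible control set (\cref{remark q}) become essential.
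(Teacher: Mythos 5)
Your overall architecture coincides with the paper's: use the potential $J$ from \cref{J} as a Lyapunov functional, bound the one-step increment from below by $\zeta_n$ times the Hamiltonian gap via \cref{H-L}, telescope, invoke \cref{a_n} together with the increment bound of \cref{C/n bound algo1} to get $a_n\to 0$, then pass to the limit by compactness and upgrade subsequential to full convergence via \cref{uniqueness}. The gap sits exactly in the step you flag as the ``main obstacle'' but then dispose of too quickly: the claim that the discrepancy between $m^{(n)}$ and $m^{(n+1)}$ can be absorbed into an $O(1/n^2)$ remainder. Writing $\int_0^L\bar w^{(n+1)}dx-\int_0^L\bar w^{(n)}dx=\zeta_n\int_0^L(q^{(n+1)}-\bar q^{(n)})m^{(n+1)}dx+\int_0^L\bar q^{(n)}(m^{(n+1)}-m^{(n)})dx$, the second term --- and likewise the analogous term in the production-cost part of $J$ --- is only $O(1/n)$ by \cref{C/n bound algo1}, with no sign. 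Since $\sum_n 1/n$ diverges, an inequality of the form $J^{(n+1)}-J^{(n)}\ge \zeta_n a_n - C/n$ does not close the telescoping argument, and $-C/n^2$ is not obtainable by a crude norm bound on $m^{(n+1)}-m^{(n)}$.

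The missing idea is the duality between the policy-evaluation equation and the two Fokker--Planck equations. One substitutes $\mathrm{P}^{(n)}(t)\bar q^{(n)}=-\partial_tu^{(n)}-\sigma^2(x)\partial^2_{xx}u^{(n)}+\lambda u^{(n)}+\bar q^{(n)}\partial_xu^{(n)}+\gamma\bar q^{(n)}+\kappa(\bar q^{(n)})^2$ from \cref{u update 1} into $\int_Qe^{-\lambda t}\mathrm{P}^{(n)}(t)\bar q^{(n)}(m^{(n+1)}-m^{(n)})\,dxdt$ and integrates by parts against \cref{m update 1} using the adjoint structure \cref{adjoint}. This converts the $O(1/n)$ term into a \emph{telescoping} boundary contribution $e^{-\lambda T}\int_0^L\big(m^{(n)}(T,x)-m^{(n+1)}(T,x)\big)u_T(x)dx$ --- which is summed exactly rather than estimated --- plus $\zeta_n$-order terms that combine with the first-order part of the $\Phi$-expansion and the cost terms to give precisely $\zeta_n\int_Qe^{-\lambda t}\mathsf{F}_n m^{(n+1)}dxdt\ge \zeta_n\kappa e^{-\lambda T}\int_Qm^{(n+1)}|q^{(n+1)}-\bar q^{(n)}|^2dxdt$. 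With that correction the remainder of your plan (summability of $a_n/n$, \cref{a_n}, compactness, identification of the limit, and uniqueness) goes through as in the paper.
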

\begin{proof}
\textit{Step 1}. Define
\begin{equation}\label{a_n 2}
	a_n=\int_Q m ^{(n+1)}|q^{(n+1)}-\bar{q}^{(n)}|^2dxdt,
\end{equation}
our aim in \textit{Step 1} is to show $a_n\rightarrow 0$. 
{We use the potential formulation from \cref{J}:
\begin{equation}\label{def J}
J(q,m)=\int_0^T e^{-\lambda t}\Phi(t,\int_0^L qmdx)dt-\int_Qe^{-\lambda t} \lc\gamma q+\kappa q^2\rc mdxdt.
\end{equation}
Recall the notation $\bar{w}^{(n)}=\bar{q}^{(n)}m^{(n)}$ from \cref{price update}. \ref{assumption3} implies $P(t,\cdot)$ is Lipschitz continuous w.r.t the second variable. Moreover, from \cref{C/n bound algo1} we have
$$
\left\vert \int_0^L \bar{w}^{(n+1)}dx-\int_0^L \bar{w}^{(n)}dx\right\vert \leq C\zeta_n.
$$
Then, there exists a constant $C>0$ such that
$$
\Phi(t,\int_0^L \bar{w}^{(n+1)}dx)-\Phi(t,\int_0^L \bar{w}^{(n)}dx)
\geq \mathrm{P}^{(n)}(t)\left(\int_0^L \bar{w}^{(n+1)}dx-\int_0^L \bar{w}^{(n)}dx\right)-C\zeta_n^2.
$$
We have
\begin{equation}\label{J1}
\begin{aligned}
&J(\bar{q}^{(n+1)},m^{(n+1)})-J(\bar{q}^{(n)},m^{(n)})\\
\geq & \int_0^T e^{-\lambda t}\mathrm{P}^{(n)}(t)\left(\int_0^L \bar{q}^{(n+1)}m^{(n+1)}dx-\int_0^L \bar{q}^{(n)}m^{(n)}dx\right)dt-C\zeta_n^2\\
&-\int_Q e^{-\lambda t}\left(\gamma \bar{q}^{(n+1)}+\kappa\big(\bar{q}^{(n+1)}\big)^2\right)m^{(n+1)}dxdt+\int_Q e^{-\lambda t}\left(\gamma \bar{q}^{(n)}+\kappa\big(\bar{q}^{(n)}\big)^2\right)m^{(n)}dxdt.
\end{aligned}
\end{equation}
We estimate the first term on the RHS of \cref{J1}. With \cref{qn+1-qn}, we have
\begin{equation}\label{J0}
\begin{aligned}
&\int_0^T e^{-\lambda t}\mathrm{P}^{(n)}(t)\left(\int_0^L \bar{q}^{(n+1)}m^{(n+1)}dx-\int_0^L \bar{q}^{(n)}m^{(n)}dx\right)dt\\
= {}& \zeta_n\int_Q e^{-\lambda t}\mathrm{P}^{(n)}(t)(q^{(n+1)}-\bar{q}^{(n)})m^{(n+1)}dxdt+\int_Q e^{-\lambda t}\mathrm{P}^{(n)}(t)\bar{q}^{(n)}(m^{(n+1)}-m^{(n)})dxdt.
\end{aligned}
\end{equation}
From \cref{u update 1},  we have
\begin{equation*}
\int_Q e^{-\lambda t} \mathrm{P}^{(n)}(t)\bar{q}^{(n)}(m^{(n+1)}-m^{(n)})dxdt=(\mathsf{A}_n)+(\mathsf{B}_n),
\end{equation*}
where
\begin{align*}
&(\mathsf{A}_n) :=\int_Q e^{-\lambda t}\left(-\partial_t u^{(n)}-\sigma^2(x) \partial^2_{xx}u^{(n)}+\lambda u^{(n)}+\bar{q}^{(n)} \partial_xu^{(n)}\right)(m^{(n+1)}-m^{(n)})dxdt,\\
&(\mathsf{B}_n) := \int_Q e^{-\lambda t}\left(\gamma \bar{q}^{(n)}+\kappa\big(\bar{q}^{(n)}\big)^2\right)(m^{(n+1)}-m^{(n)})dxdt.
\end{align*}
We have
\begin{equation*}
\begin{aligned}
(\mathsf{A}_n) 
={}& \int_Q e^{-\lambda t}\left(\partial_t u^{(n)}+\sigma^2(x) \partial^2_{xx}u^{(n)}-\lambda u^{(n)}-\bar{q}^{(n)} \partial_xu^{(n)}\right)m^{(n)}dxdt\\
& -\int_Q e^{-\lambda t}\left(\partial_t u^{(n)}+\sigma^2(x) \partial^2_{xx}u^{(n)}-\lambda u^{(n)}-\bar{q}^{(n+1)} \partial_xu^{(n)}\right)m^{(n+1)}dxdt\\
&+\zeta_n\int_Q e^{-\lambda t}(\bar{q}^{(n)}\partial_xu^{(n)}-q^{(n+1)}\partial_xu^{(n)})m^{(n+1)}dxdt.
\end{aligned}
\end{equation*}
From integration by parts (using the adjoint structure \cref{adjoint}),
$$
\begin{aligned}
&\int_Q e^{-\lambda t}\left(\partial_t u^{(n)}+\sigma^2(x) \partial^2_{xx}u^{(n)}-\lambda u^{(n)}-\bar{q}^{(n)} \partial_xu^{(n)}\right)m^{(n)}dxdt\\
={}&e^{-\lambda T}\int_0^Lm^{(n)}(T,x)u_T(x)dx-\int_0^Lu^{(n)}(0,x)m(0,x)dx,
\end{aligned}
$$
$$
\begin{aligned}
&\int_Q e^{-\lambda t}\left(\partial_t u^{(n)}+\sigma^2(x) \partial^2_{xx}u^{(n)}-\lambda u^{(n)}-\bar{q}^{(n+1)} \partial_xu^{(n)}\right)m^{(n+1)}dxdt\\
={}&e^{-\lambda T}\int_0^Lm^{(n+1)}(T,x)u_T(x)dx-\int_0^Lu^{(n)}(0,x)m(0,x)dx.
\end{aligned}
$$
Therefore, using again \cref{qn+1-qn}, we obtain
\begin{equation}\label{An}
(\mathsf{A}_n)=e^{-\lambda T}\int_0^L(m^{(n)}(T,x)-m^{(n+1)}(T,x))u_T(x)dx+\zeta_n\int_Q e^{-\lambda t}(\bar{q}^{(n)}\partial_xu^{(n)}-q^{(n+1)}\partial_xu^{(n)})m^{(n+1)}dxdt.
\end{equation}
Next, we consider
\begin{equation*}
\begin{aligned}
(\mathsf{B}_n) 
={}& \int_Q e^{-\lambda t}\left(\gamma \bar{q}^{(n)}+\kappa\big(\bar{q}^{(n)}\big)^2-\gamma \bar{q}^{(n+1)}-\kappa\big(\bar{q}^{(n+1)}\big)^2 \right)m^{(n+1)}dxdt\\
&+\int_Q e^{-\lambda t}\left(\gamma \bar{q}^{(n+1)}+\kappa\big(\bar{q}^{(n+1)}\big)^2\right)m^{(n+1)}dxdt-\int_Q e^{-\lambda t}\left(\gamma \bar{q}^{(n)}+\kappa\big(\bar{q}^{(n)}\big)^2\right)m^{(n)}dxdt.
\end{aligned}
\end{equation*}
Since
$
	\big(\bar{q}^{(n+1)}\big)^2\leq (1-\zeta_n) \big(\bar{q}^{(n)}\big)^2+\zeta_n (q^{(n+1)}\big)^2,
$
with \cref{qn+1-qn} we obtain
\begin{equation}\label{Bn}
\begin{aligned}
(\mathsf{B}_n) \geq {}& \zeta_n\int_Q e^{-\lambda t}\left(\gamma \bar{q}^{(n)}+\kappa\big(\bar{q}^{(n)}\big)^2-\gamma q^{(n+1)}-\kappa\big(q^{(n+1)}\big)^2\right)m^{(n+1)}dxdt\\
&+\int_Q e^{-\lambda t}\left(\gamma \bar{q}^{(n+1)}+\kappa\big(\bar{q}^{(n+1)}\big)^2\right)m^{(n+1)}dxdt-\int_Q e^{-\lambda t}\left(\gamma \bar{q}^{(n)}+\kappa\big(\bar{q}^{(n)}\big)^2\right)m^{(n)}dxdt.
\end{aligned}
\end{equation}
With \cref{J1}, \cref{J0}, results for $(\mathsf{A}_n)$ \cref{An} and $(\mathsf{B}_n)$ \cref{Bn} we obtain:
\begin{equation}\label{J2}
\begin{aligned}
&J(\bar{q}^{(n+1)},m^{(n+1)})-J(\bar{q}^{(n)},m^{(n)})\\
\geq {}& \zeta_n \int_Q e^{-\lambda t}\mathsf{F}_nm^{(n+1)}dxdt-C\zeta_n^2+e^{-\lambda T}\int_0^L(m^{(n)}(T,x)-m^{(n+1)}(T,x))u_T(x)dx,
\end{aligned}
\end{equation}
where 
\begin{equation}\label{F def}
\mathsf{F}_n(t,x):= \left(-q^{(n+1)} \left(\partial_xu^{(n)}-\mathrm{P}^{(n)}(t)+\gamma \right)-\kappa(q^{(n+1)})^2\right)                    -\left(-\bar{q}^{(n)} \left(\partial_xu^{(n)}-\mathrm{P}^{(n)}(t)+\gamma \right)-\kappa(\bar{q}^{(n)})^2\right).
\end{equation}
}
\par
We compare the term $\int_Q e^{-\lambda t}\mathsf{F}_nm^{(n+1)}dxdt$ with $a_n$. Recall from \cref{q update 1} that 
$$
-q^{(n+1)} \left(\partial_xu^{(n)}-\mathrm{P}^{(n)}(t)+\gamma \right)-\kappa(q^{(n+1)})^2=H(\partial_xu^{(n)}-\mathrm{P}^{(n)}(t)+\gamma),
$$
 we obtain
$
	\mathsf{F}_n \geq \kappa|q^{(n+1)}-\bar{q}^{(n)}|^2
$
by using \cref{H-L}. As $e^{-\lambda T}\leq e^{-\lambda t}$,
\begin{equation}\label{a_n bound}
	\frac{\beta\kappa e^{-\lambda T}}{n+\beta}a_n\leq  \int_Qe^{-\lambda t}\frac{\beta}{n+\beta}m^{(n+1)}\mathsf{F}_n(t,x)dxdt.
\end{equation}\par
Now we are ready to estimate $\sum_{n=1}^{\infty} a_n/n$. From \cref{J2} and \cref{a_n bound},
\begin{equation*}
\frac{\beta\kappa e^{-\lambda T}}{n+\beta}a_n
\leq  J(\bar{q}^{(n+1)},m^{(n+1)})-J(\bar{q}^{(n)},m^{(n)})+C\zeta_n^2+e^{-\lambda T}\int_0^L(m^{(n+1)}(T,x)-m^{(n)}(T,x))u_T(x)dx,
\end{equation*}
by telescoping sum, we obtain $\forall N\in \mathbb{N}$ and $N>1$
\begin{equation}\label{a_n telescope}
	\begin{aligned}
		\sum_{n=1}^{N}\frac{a_n}{n+\beta}
		\leq {}&\frac{ e^{\lambda T}}{\beta\kappa}\left( J(\bar{q}^{(N+1)},m^{(N+1)})-J(\bar{q}^{(1)},m^{(1)})\right) +\frac{C e^{\lambda T}}{\beta\kappa}\sum_{n=1}^{N}\zeta_n^2                                                                                    \\
		                                     & +\frac{1}{\beta\kappa}\int_0^L(m^{(N+1)}(T,x)-m^{(1)}(T,x))u_T(x)dx.
	\end{aligned}
\end{equation}
From \cref{Solution u m1}, we have that the right hand side of \cref{a_n telescope} is bounded for any $N>1$, therefore
$\sum_{n=1}^{\infty} a_n / n < +\infty$. From Lemma \ref{Solution u m1} and Lemma \ref{C/n bound algo1}, we have that $\abs a_n - a_{n+1} \abs < C/n$. Then we obtain $a_n\rightarrow 0$ from Lemma \ref{a_n}.\par
\textit{Step 2}. 
{
For each $n$, $(u^{(n)},m^{(n)},q^{(n)})\in \mathbbm{X}$ and $\bar{q}^{(n)}\in L^\infty(Q)$. Since the domain $Q$ is bounded, $(\bar{q}^{(n)})^2\in L^{r}(Q)$. From \cref{Solution u m1} and Sobolev embedding \cref{Sobolev}, we can extract a subsequence $(u^{(n_i)},m^{(n_i)},q^{(n_i)},\bar{q}^{(n_i)})$ such that 
\begin{center}
 $u^{(n_i)}$ converges weakly in $W^{1,2}_r(Q)$,\\
 $\partial_xu^{(n_i)}$ and $m^{(n_i)}$ converge uniformly, \\
 $q^{(n_i)}$ and $\bar{q}^{(n_i)}$ converge in $L^\infty$-weak$^*$,\\
 $q^{(n_i)}(t,\cdot)$ and $\bar{q}^{(n_i)}(t,\cdot)$ converge in $L^\infty$-weak$^*$ a.e. in $[0,T]$,\\
 $(\bar{q}^{(n_i)})^2$ converges weakly in $L^r(Q)$.
 \end{center}
%$u^{(n_i)}$ converges weakly in $W^{1,2}_r(Q)$, $\partial_xu^{(n_i)}$ and $m^{(n_i)}$ converge uniformly,  $q^{(n_i)}$ and $\bar{q}^{(n_i)}$ converge in $L^\infty$-weak$^*$, $q^{(n_i)}(t,\cdot)$ and $\bar{q}^{(n_i)}(t,\cdot)$ converge in $L^\infty$-weak$^*$ a.e. on $[0,T]$, $(\bar{q}^{(n_i)})^2$ converges weakly in $L^r(Q)$. 
We denote the limit by $(\hat{u},\hat{m},\hat{q},\check{q})$. $L^\infty$-weak$^*$ convergence of $\bar{q}^{(n_i)}$ to $\check{q}$ and uniform convergence of $m^{(n_i)}$ to $\hat{m}$ imply $L^\infty$-weak$^*$ convergence of $\bar{q}^{(n_i)}m^{(n_i)}$ to $\check{q}\hat{m}$. This implies $\text{div} (m^{(n)} \bar{q}^{(n)})$ converges to $\text{div} (\hat{m} \check{q})$ in $L^2(0,T;H^{-1}(\mathbb{L}))$. We can pass to the limit in \cref{m update 1} and $\hat{m}$ is a bounded weak solution (\cref{def weak sol}) to 
$$
\begin{aligned}
  \qquad & \partial_t \hat{m}-\partial^2_{xx}\big(\sigma^2(x)  \hat{m}\big)-\text{div} (\hat{m} \check{q})=0,        \\
  & \hat{m}(t,0)=0,\, \partial_x\big(\sigma^2(L) \hat{m}(t,L)\big)+\check{q}(t,L)\hat{m}(t,L)=0,  \,\hat{m}(0,x)=m_0(x).
\end{aligned}
$$
Similarly, $\int_0^L\bar{q}^{(n_i)}(t,x)m^{(n_i)}(t,x)dx$ converges to $\int_0^L\check{q}(t,x)\hat{m}(t,x)dx$ a.e. in $[0,T]$, with \ref{assumption3} we have $\mathrm{P}^{(n)}(t)$ converges to $P\big(t,\int_0^L\check{q}(t,x)\hat{m}(t,x)dx\big)$ a.e. in $[0,T]$. $\mathrm{P}^{(n)}$ is uniformly bounded in $L^\infty(0,T)$, hence from Egorov theorem (cfr. \cite[Theorem 4.19]{brezis2011functional}, p.115 and p.123) $\mathrm{P}^{(n)}$ converges strongly to $P\big(\cdot,\int_0^L\check{q}\hat{m}dx\big)$ in $L^r(0,T)$. Again with $L^\infty$-weak$^*$ convergence of $\bar{q}^{(n_i)}$, $\bar{q}^{(n_i)}\mathrm{P}^{(n)}$ converges to $\check{q}P\big(\cdot,\int_0^L\check{q}\hat{m}dx\big)$ weakly in $L^r(Q)$. Moreover, with the uniform convergence of $\partial_xu^{(n_i)}$, $\bar{q}^{(n_i)}\partial_xu^{(n_i)}$ converges weakly to $\check{q}\partial_x\hat{u}$ in $L^r(Q)$. We can pass to the limit in \cref{u update 1} and \cref{q update 1} to obtain
$$
\begin{aligned}
			       \qquad & \partial_t \hat{u}+\sigma^2(x) \partial^2_{xx}\hat{u}-\lambda \hat{u}-\check{q}\partial_x\hat{u}+\check{q}P\big(t,\int_0^L\check{q}\hat{m}dx\big)-\gamma \check{q}-\kappa\big(\check{q}\big)^2=0,        \\
			       &\hat{u}(t,0)=0,\,\partial_x\hat{u}(t,L)=0, \,\hat{u}(T,x)=u_T(x),\\
& \hat{q}(t,x)= \lc \frac{P\big(t,\int_0^L\check{q}(t,x)\hat{m}(t,x)dx\big)-\gamma-\partial_x\hat{u}(t,x)}{2\kappa} \rc_+,
\end{aligned}
$$
where the last equation is satisfied a.e. in $Q$. Since $m ^{(n+1)}|q^{(n+1)}-\bar{q}^{(n)}|^2$ is uniformly bounded, from Lebesgue dominated convergence theorem and $a_n\rightarrow 0$ we have 
$\int_Q\hat{m}\vert \hat{q}-\check{q}\vert^2dx dt=0,$
 therefore 
$\hat{m}\vert \hat{q}-\check{q}\vert^2=0$ a.e. in $Q$. It follows from \cref{m stability} that $\hat{m}>0$ in $Q$, hence we have $\hat{q}=\check{q}$ a.e. in $Q$. Therefore, $(\hat{u},\hat{m},\hat{q})$ is a solution to system~\eqref{MFG}. From \cref{uniqueness}, the solution to the system \eqref{MFG} is unique in $\mathbbm{X}$, hence all subsequences of $(u^{(n)},m^{(n)},q^{(n)})$ converge to the same limit. We conclude that the whole sequence $(u^{(n)},m^{(n)},q^{(n)})$ converges to the solution of system \eqref{MFG}.
}
\end{proof}

{Next we show that if  $(u,m,{q^*})\in \mathbbm{X}$ is a solution to system~\eqref{MFG}, then $u$  is a classical solution. This is rather straight forward for some mean field games (cfr. \cite{tang2023learning}), but much more delicate for mean field games of controls. Similar analysis has been considered in \cite{bonnans2021schauder}, but the constraint $q\geq 0$ introduced additional difficulty. The key to overcome this difficulty is to use the monotonicity of $P(t,a)$. We first show that $P(t,\psi(t))$ is H\"older continuous in time. }
\begin{lemma}\label{etaHolder}
Under  Assumption \ref{Main assumptions} and suppose $(u,m,{q^*})$ in $\mathbbm{X}$ is the weak solution to system~\eqref{MFG}. Then $\|\psi(\cdot)\|_{C^{\alpha/2}(0,T)}\leq C$, where $\psi(t)$ is defined as in \cref{def w}. Moreover, $\mathrm{P}(\cdot)\in C^{\alpha/2}(0,T)$.
\end{lemma}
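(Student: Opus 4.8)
The plan is to exploit the time regularity that is already available for $m$ and $\partial_x u$, and to break the circular dependence of $q^*$ on the price by freezing the price and using the monotonicity of $P$. First I would collect the ingredients. By \cref{m stability}, $m\in \mathcal C^{\alpha/2,\alpha}(Q)$, so $m$ is H\"older-$\alpha/2$ in time; by \cref{Solution u m1} together with the Sobolev embedding \cref{Sobolev}, $\partial_x u\in \mathcal C^{\alpha/2,\alpha}(Q)$, hence also H\"older-$\alpha/2$ in time. By \cref{monotone} we have $0\leq {q^*}\leq \frac{C_P}{2\kappa}$, and by \cref{m L1}, $\int_0^L m(t,x)\,dx\leq 1$, so $\psi(t)\in[0,\frac{C_P}{2\kappa}]$ and $\mathrm{P}(t)\in[\gamma,C_P+\gamma]$ lie in fixed compact sets on which the $\mathcal C^2$ bounds on $P$ from \ref{assumption3} (in particular $\|\partial_t P\|_\infty$ and $\|\partial_a P\|_\infty$) are finite.

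The obstacle is the circularity in \cref{controlrecursive}: ${q^*}(t,x)=\bigl(\frac{\mathrm{P}(t)-\gamma-\partial_x u(t,x)}{2\kappa}\bigr)_+$ depends on $\mathrm{P}(t)=P(t,\psi(t))$, while $\psi(t)=\int_0^L {q^*}m\,dx$ depends on $\mathrm{P}(t)$. I resolve this by freezing the price. Writing $g(s)=(s/(2\kappa))_+$ (nondecreasing and $\frac{1}{2\kappa}$-Lipschitz), define the frozen aggregate
\[
\Psi(t,p):=\int_0^L g\bigl(p-\gamma-\partial_x u(t,x)\bigr)\,m(t,x)\,dx,
\]
so that $\psi(t)=\Psi(t,\mathrm{P}(t))$ and $\mathrm{P}(t)=P(t,\psi(t))$. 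Splitting the time difference into a term with $g$ evaluated at two times (controlled by Lipschitzness of $g$ and H\"older continuity of $\partial_x u$) and a term with $m$ evaluated at two times (controlled by boundedness of $g$ and H\"older continuity of $m$), and using $\int_0^L m\leq 1$, I would show the \emph{frozen estimate} $|\Psi(t_1,p)-\Psi(t_2,p)|\leq C|t_1-t_2|^{\alpha/2}$ for each fixed $p$, together with $\Psi(t,\cdot)$ being nondecreasing and $\frac{1}{2\kappa}$-Lipschitz.

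The key step is to transfer this regularity to the fixed point $\mathrm{P}(t)$. Set $F(t,p):=p-P(t,\Psi(t,p))$, so $\mathrm{P}(t)$ is its unique zero. Since $\Psi(t,\cdot)$ is nondecreasing and $P(t,\cdot)$ is nonincreasing, for $p>p'$ one has $P(t,\Psi(t,p))\leq P(t,\Psi(t,p'))$, whence $F(t,p)-F(t,p')\geq p-p'$; thus $|F(t,p)-F(t,p')|\geq|p-p'|$. Writing $p_i=\mathrm{P}(t_i)$ and using $F(t_1,p_1)=F(t_2,p_2)=0$,
\[
|p_1-p_2|\leq |F(t_1,p_1)-F(t_1,p_2)|=|F(t_2,p_2)-F(t_1,p_2)|=\bigl|P(t_1,\Psi(t_1,p_2))-P(t_2,\Psi(t_2,p_2))\bigr|.
\]
The right-hand side is bounded by splitting off the $t$-dependence of $P$ (at most $C|t_1-t_2|$ via $\partial_t P$ on the compact range) and the difference $P(t_1,\Psi(t_1,p_2))-P(t_1,\Psi(t_2,p_2))$ (at most $C|\Psi(t_1,p_2)-\Psi(t_2,p_2)|\leq C|t_1-t_2|^{\alpha/2}$ via $\partial_a P$ and the frozen estimate), giving $|\mathrm{P}(t_1)-\mathrm{P}(t_2)|\leq C|t_1-t_2|^{\alpha/2}$, i.e. $\mathrm{P}\in \mathcal C^{\alpha/2}(0,T)$. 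Finally, from $\psi(t)=\Psi(t,\mathrm{P}(t))$, the Lipschitzness of $\Psi(t,\cdot)$ absorbs $|\mathrm{P}(t_1)-\mathrm{P}(t_2)|$ and the frozen estimate controls the remaining time difference, yielding $\|\psi\|_{\mathcal C^{\alpha/2}(0,T)}\leq C$. I expect the monotonicity-driven bound $F(t,p)-F(t,p')\geq p-p'$, which decouples the circular dependence, to be the crux of the argument.
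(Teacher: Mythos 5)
Your argument is correct, and it takes a genuinely different route from the paper's. The paper fixes the ordering $\psi(t_1)\geq\psi(t_2)$ without loss of generality, writes $q^*=(f)_+=\tfrac{f+|f|}{2}$, and exploits the sign identity $P(t_2,\psi(t_1))-P(t_2,\psi(t_2))+\lv P(t_2,\psi(t_1))-P(t_2,\psi(t_2))\rv=0$ (valid precisely because $\partial_aP\leq 0$ and $\psi(t_1)\geq\psi(t_2)$) to cancel the self-referential contribution of $\mathrm{P}$ inside the integrand; this yields the H\"older bound on $\psi$ first, and the bound on $\mathrm{P}$ is then read off from $\mathrm{P}(t)=P(t,\psi(t))$. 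You instead freeze the price, prove a H\"older-in-time estimate for the frozen aggregate $\Psi(t,p)$ uniformly in $p$, and observe that $F(t,p)=p-P(t,\Psi(t,p))$ is increasing in $p$ with slope at least $1$ because a nonincreasing $P(t,\cdot)$ is composed with a nondecreasing $\Psi(t,\cdot)$; the inverse-function-type bound $\lv p_1-p_2\rv\leq\lv F(t_1,p_2)-F(t_2,p_2)\rv$ then transfers the frozen estimate to $\mathrm{P}$ first, and $\psi(t)=\Psi(t,\mathrm{P}(t))$ follows by the $\tfrac{1}{2\kappa}$-Lipschitz dependence of $\Psi$ on $p$. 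Both proofs hinge on the same two monotonicity facts ($\partial_aP\leq 0$ and the monotonicity of $(\cdot)_+$), but your fixed-point formulation has the advantage of giving existence/uniqueness of the zero of $F(t,\cdot)$ and a quantitative modulus in one stroke, and it avoids the somewhat delicate absolute-value bookkeeping of \cref{psi1-psi2}; the paper's computation is more elementary in that it never introduces the auxiliary map $\Psi$. Two cosmetic points: the time regularity of $\partial_xu$ should be attributed to $u\in W^{1,2}_r(Q)$ (which is part of the definition of $\mathbbm{X}$) together with the embedding \cref{Sobolev}, rather than to \cref{Solution u m1}, which concerns the iterates of the algorithm; and you should note explicitly that $\psi(t)=\Psi(t,\mathrm{P}(t))$ is exactly the fixed-point relation \cref{controlrecursive} integrated against $m$, so that $\mathrm{P}(t)$ is indeed a zero of $F(t,\cdot)$.
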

\begin{proof}
Let $0\leq t_1\leq T$, $0\leq t_2\leq T$ and $t_1\neq t_2$. Using \cref{def w} we denote ${q^*}(t_1,x) =(f_1)_+$ and ${q^*}(t_2,x)=(f_2)_+$ with
	$f_1=\frac{\mathrm{P}( t_1)-\gamma-\partial_xu(t_1,x)}{2\kappa}$, 
	$f_2=\frac{\mathrm{P}( t_2)-\gamma-\partial_xu(t_2,x)}{2\kappa}$.\par
First, we consider the case $\psi(t_1)\geq \psi(t_2)$. Recall $(f)_+=\frac{f+|f|}{2}$,
%{
%\begin{equation}\label{psi1-psi2}
%	\begin{aligned}
%		&\psi(t_1)-\psi(t_2)\\
%		={} & \frac{1}{2}\int_0^L(f_1m(t_1,x)-f_2m(t_2,x))dx+\frac{1}{2}\int_0^L(|f_1|m(t_1,x)-|f_2|m(t_2,x))dx \\
%				\leq{}                 & \frac{1}{2}\int_0^L\big(f_1-f_2+|f_1-f_2|\big)m(t_1,x)dx+ \int_0^L|f_2||m(t_1,x)-m(t_2,x)|dx.
%	\end{aligned}
%\end{equation}
%}
\begin{equation}\label{psi1-psi2}
	\begin{aligned}
		&\psi(t_1)-\psi(t_2)\\
		={} & \frac{1}{2}\int_0^L(f_1m(t_1,x)-f_2m(t_2,x))dx+\frac{1}{2}\int_0^L(|f_1|m(t_1,x)-|f_2|m(t_2,x))dx \\
		={}                    & \frac{1}{2}\int_0^L(f_1-f_2)m(t_1,x)dx+\frac{1}{2}\int_0^Lf_2\lc m(t_1,x)-m(t_2,x)\rc dx        \\
		                       & +\frac{1}{2}\int_0^L(|f_1|-|f_2|)m(t_1,x)dx+\frac{1}{2}\int_0^L|f_2|\big(m(t_1,x)-m(t_2,x)\big)dx \\
		\leq{}                 & \frac{1}{2}\int_0^L\big(f_1-f_2+|f_1-f_2|\big)m(t_1,x)dx+ \int_0^L|f_2||m(t_1,x)-m(t_2,x)|dx.
	\end{aligned}
\end{equation}
Since $\psi(t_1)-\psi(t_2)\geq 0$, $P\big( t_2,\psi(t_1)\big)\leq P\big( t_2,\psi(t_2)\big)$ from Assumption~\ref{assumption3}. Hence
$
	P\lc  t_2,\psi(t_1)\rc-P\lc t_2,\psi(t_2)\rc+\lv P\lc t_2,\psi(t_1)\rc-P\lc t_2,\psi(t_2)\rc \rv=0.
$
We observe that
\begin{equation*}
	\begin{aligned}
		  & f_1-f_2+|f_1-f_2|    \\                                                                                                                                              
		= & {}\frac{1}{4\kappa}\lc P\lc t_1,\psi(t_1)\rc-P\lc t_2,\psi(t_2)\rc \rc-\frac{1}{4\kappa}\big(\partial_xu(t_1,x)-\partial_xu(t_2,x)\big)                       \\
		  & +\left\vert \frac{1}{4\kappa}\Big(P\big( t_1,\psi(t_1)\big)-P\big( t_2,\psi(t_2)\big)\Big)-\frac{1}{4\kappa}\big(\partial_xu(t_1,x)-\partial_xu(t_2,x)\big)\right\vert,
	\end{aligned}
\end{equation*}
\begin{equation*}
\frac{1}{4\kappa}\Big(P\big( t_1,\psi(t_1)\big)-P\big( t_2,\psi(t_2)\big)\Big)
\leq \frac{1}{4\kappa}P\big( t_2,\psi(t_1)\big)-P\big( t_2,\psi(t_2)\big)+\frac{1}{4\kappa}\left\vert P\big( t_1,\psi(t_1)\big)-P\big( t_2,\psi(t_1)\big)\right\vert
\end{equation*}
and
\begin{equation*}
	\begin{aligned}
		        & \left\vert \frac{1}{4\kappa}\Big(P\big( t_1,\psi(t_1)\big)-P\big( t_2,\psi(t_2)\big)\Big)-\frac{1}{4\kappa}\big(\partial_xu(t_1,x)-\partial_xu(t_2,x)\big)\right\vert                                                                                 \\
		\leq {} & \frac{1}{4\kappa} \lv P\big( t_2,\psi(t_1)\big)-P\big( t_2,\psi(t_2)\big)\rv+\frac{1}{4\kappa}\left\vert P\big( t_1,\psi(t_1)\big)-P\big( t_2,\psi(t_1)\big)\right\vert +\frac{1}{4\kappa}\left\vert \partial_xu(t_1,x)-\partial_xu(t_2,x)\right\vert.
	\end{aligned}
\end{equation*}
Therefore
\begin{equation*}
	\begin{aligned}
		  f_1-f_2+|f_1-f_2|                                                                                                                               
		\leq {}& \frac{1}{4\kappa}\Big(P\big( t_2,\psi(t_1)\big)-P\big( t_2,\psi(t_2)\big)+\lv P\big( t_2,\psi(t_1)\big)-P\big( t_2,\psi(t_2)\big)\rv \Big)                               \\
		     & +\frac{1}{2\kappa}\left\vert P\big( t_1,\psi(t_1)\big)-P\big( t_2,\psi(t_1)\big)\right\vert+\frac{1}{2\kappa}\left\vert \partial_xu(t_1,x)-\partial_xu(t_2,x)\right\vert \\
		\leq {}& \frac{1}{2\kappa}\left\vert P\big( t_1,\psi(t_1)\big)-P\big( t_2,\psi(t_1)\big)\right\vert+\frac{1}{2\kappa}\left\vert \partial_xu(t_1,x)-\partial_xu(t_2,x)\right\vert.
	\end{aligned}
\end{equation*}
$\|u\| _{W^{1,2}_r(Q)}\leq C$ implies $\|\partial_xu\| _{\mathcal C^{\alpha/2,\alpha}(Q)}\leq C$ by \cref{Sobolev}. From $f_2\in L^\infty(Q)$ and $m\in \mathcal{C}^{\alpha/2,\alpha}(Q)$, cfr. \cref{m stability}, it follows $\int_0^L|f_2||m(t_1,x)-m(t_2,x)|dx\leq C|t_1-t_2|^{\alpha/2}$. Therefore, from \cref{psi1-psi2} we obtain
$0\leq \psi(t_1)-\psi(t_2)\leq C|t_1-t_2|^{\alpha/2}$.\par
Second, we assume $\psi(t_1)-\psi(t_2)\leq 0$. Then we can follow the same reasoning to obtain $\psi(t_2)-\psi(t_1)\leq C|t_1-t_2|^{\alpha/2}$. From \ref{assumption3}, the price $P\lc t,\psi(t)\rc$ is $\mathcal{C}^{1}(0,T)$ for the $t$-variable. Since
\begin{equation*}
	\lv \mathrm{P}(t_1)-\mathrm{P}(t_2)\rv \leq \sup_{t_1\leq t\leq t_2}\lv \partial_tP\lc t,\psi(t) \rc\rv \lv t_1-t_2\rv
	+\sup_{t_1\leq t\leq t_2}\lv \partial_{\psi}P(t,\psi(t))\rv \lv \psi(t_1)-\psi(t_2)\rv,
\end{equation*}
we can conclude $\mathrm{P}(\cdot)\in C^{\alpha/2}(0,T)$.
\end{proof}

\begin{theorem}\label{weak-classical}
	Suppose $(u,m,{q^*})\in \mathbbm{X}$ is a solution to system~\eqref{MFG}, then
	$$
		(u,m,{q^*})\in \mathcal C^{1+\alpha/2,2+\alpha}(Q)\times \mathcal C^{(1+\alpha)/2,1+\alpha}(Q)\times \mathcal C^{\alpha/2,\alpha}(Q).
	$$
\end{theorem}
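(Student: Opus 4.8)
The plan is a bootstrap: I would first upgrade the control $q^*$ to a H\"older-continuous function, and then read off \cref{MFG}$(i)$ and $(ii)$ as \emph{linear} parabolic problems with H\"older data, to which classical Schauder theory applies. I begin by collecting the regularity already in hand. Since $u\in W^{1,2}_r(Q)$ with $r>3$, the embedding \cref{Sobolev} gives $\partial_x u\in\mathcal C^{\alpha/2,\alpha}(Q)$; \cref{m stability} gives $m\in\mathcal C^{\alpha/2,\alpha}(Q)$ with $m>0$; and \cref{etaHolder} gives $\mathrm P(\cdot)\in\mathcal C^{\alpha/2}(0,T)$, which, being independent of $x$, belongs to $\mathcal C^{\alpha/2,\alpha}(Q)$. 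Writing $q^*$ through the explicit formula \cref{controlrecursive}, using that $(\cdot)_+$ is globally Lipschitz together with the a priori bound \cref{q bound}, I obtain
\[
q^*=\Big(\tfrac{\mathrm P(t)-\gamma-\partial_x u}{2\kappa}\Big)_+\in\mathcal C^{\alpha/2,\alpha}(Q),
\]
which already establishes the claimed regularity of the third component.

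Next, treating $q^*$ as a \emph{given} H\"older coefficient, \cref{MFG}$(i)$ becomes the linear parabolic equation
\[
\partial_t u+\sigma^2(x)\partial^2_{xx}u-\lambda u-q^*\partial_x u+\big(q^*\mathrm P(t)-\gamma q^*-\kappa (q^*)^2\big)=0,
\]
whose principal coefficient $\sigma^2$ is $\mathcal C^2$ by \ref{assumption4}, whose first-order coefficient $-q^*$ and source $q^*\mathrm P-\gamma q^*-\kappa(q^*)^2$ lie in $\mathcal C^{\alpha/2,\alpha}(Q)$, and whose terminal datum $u_T$ is $\mathcal C^3$ with $u_T(0)=0$ and $\partial_x u_T(L)=0$ by \ref{assumption2}. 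Linear parabolic Schauder theory for the mixed Dirichlet ($x=0$) / Neumann ($x=L$) problem (e.g. \cite[Chapter IV]{MR0241822}), with the compatibility conditions furnished by \ref{assumption2}, then yields $u\in\mathcal C^{1+\alpha/2,2+\alpha}(Q)$.

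For $m$ the crucial point is to keep \cref{MFG}$(ii)$ in divergence form, so that only H\"older continuity of the flux data is required. Expanding and regrouping,
\[
\partial_t m=\partial_x\big(\sigma^2(x)\partial_x m+g\big),\qquad g:=\big((\sigma^2)'(x)+q^*\big)m,
\]
where $g\in\mathcal C^{\alpha/2,\alpha}(Q)$ as a product of H\"older functions; the lateral condition at $x=L$ is exactly $\sigma^2\partial_x m+g=0$, the natural conormal (no-flux) condition, while $m(t,0)=0$ and $m_0\in\mathcal C^3$ by \ref{assumption1}. Schauder estimates for divergence-form parabolic equations with H\"older principal coefficient and H\"older right-hand side (see \cite{MR0241822}) then give $m\in\mathcal C^{(1+\alpha)/2,1+\alpha}(Q)$, which completes the proof.

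The main obstacle is precisely that the truncation $(\cdot)_+$ prevents $q^*$ from being better than H\"older: across the free boundary $\{\mathrm P(t)-\gamma-\partial_x u=0\}$ the derivative $\partial_x q^*$ jumps, so one cannot recast the FPK equation in non-divergence form (which would demand $\partial_x q^*\in\mathcal C^{\alpha/2,\alpha}$, false in general) and still invoke the classical machinery; writing the drift in divergence form circumvents this, since it only uses $g\in\mathcal C^{\alpha/2,\alpha}$. A second, more hidden, difficulty is that the H\"older-in-time regularity of $\mathrm P$, without which $q^*$ would not even be H\"older and the entire bootstrap would collapse, is supplied by \cref{etaHolder}, where the monotonicity of $P(t,\cdot)$ in \ref{assumption3} is essential. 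Finally, obtaining the estimates up to the terminal time and the corners of $Q$ relies on the compatibility encoded in \ref{assumption1} and \ref{assumption2}.
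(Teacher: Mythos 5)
Your proposal is correct and follows essentially the same route as the paper: obtain $\partial_x u\in\mathcal C^{\alpha/2,\alpha}(Q)$ from the embedding \cref{Sobolev}, combine it with \cref{etaHolder} and \cref{controlrecursive} to get $q^*\in\mathcal C^{\alpha/2,\alpha}(Q)$, and then treat \cref{MFG}$(i)$ and $(ii)$ as linear parabolic problems with H\"older data and apply Schauder theory. The paper's own proof is a four-line version of exactly this bootstrap; your write-up merely supplies the details it leaves implicit (in particular the divergence-form reformulation of the FPK equation), which is consistent with the intended argument.
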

\begin{proof}
It follows from \cref{Maintheorem1} that $\partial_xu\in \mathcal C^{\alpha/2,\alpha}(Q)$. From \cref{etaHolder}  and \cref{controlrecursive} we then obtain ${q^*}\in \mathcal C^{\alpha/2,\alpha}(Q)$. We have $m\in \mathcal C^{(1+\alpha)/2,1+\alpha}(Q)$. By Schauder estimate, $u\in \mathcal C^{1+\alpha/2,2+\alpha}(Q)$.
\end{proof}
{A useful measure of convergence of the algorithm is \textit{exploitability}. It has been discussed in \cite{perrin2020fictitious} for MFG reinforcement learning and in \cite{MR4659381} for considering the convergence of conditional gradient method for solving a MFG of controls. To define it, consider the solution to the HJB equation, where $\mathrm{P}^{(n)}(t)$ is given and the unknown is $v^{(n)}$:
\begin{equation}\label{best response}
		      \left\{\begin{aligned}
			       \quad &\partial_t v^{(n)} +\sigma^2(x) \partial^2_{xx}v^{(n)} -\lambda v^{(n)} +\sup_{q\geq 0}\left\{-q\partial_xv^{(n)}+q\mathrm{P}^{(n)}(t)-\gamma q-\kappa q^2\right\}=0, \\ 
			       \quad &v^{(n)}(t,0)=0,\,\partial_xv^{(n)}(t,L)=0, \,v^{(n)}(T,x)=u_T(x).
		      \end{aligned}\right.
	      \end{equation}
	      We define 
\begin{equation}\label{def exploit}	     
	     \Gamma_n=\int_0^Lv^{(n)}(0,x)m_0(x)dx-\int_0^Lu^{(n)}(0,x)m_0(x)dx
\end{equation}
as the \textit{exploitability} at iteration $n$. It quantifies the \textit{average} gain of a representative agent to replace its policy by the best response while the rest of the population stays with the strategy $\bar{q}^{(n)}$. From  $v^{(n)}(t,0)-u^{(n)}(t,0)=0$, $\partial_x(v^{(n)}-u^{(n)}) (t,L)=0$, $v^{(n)}(T,x)-u^{(n)}(T,x)=0$ and 
	      $$ 
	      \begin{aligned}
	     &- \partial_t (v^{(n)}-u^{(n)}) -\sigma^2(x) \partial^2_{xx}(v^{(n)}-u^{(n)}) +\lambda (v^{(n)}-u^{(n)})\\
	     ={}&\sup_{q\geq 0}\left\{-q\partial_xv^{(n)}+q\mathrm{P}^{(n)}(t)-\gamma q-\kappa q^2\right\}-\lc-\bar{q}^{(n)} \partial_xu^{(n)}+\bar{q}^{(n)} \mathrm{P}^{(n)}(t)-\gamma \bar{q}^{(n)}-\kappa\lc\bar{q}^{(n)}\rc^2\rc \geq 0,
	      \end{aligned}
	      $$
	      one can obtain from weak maximum principle that $v^{(n)}(t,x)\geq u^{(n)}(t,x)$. With \ref{assumption1}, we have $\Gamma_n\geq 0$. Moreover, $\Gamma_n= 0$ implies $v^{(n)}(0,x)= u^{(n)}(0,x)$ a.e. in $\mathbb{L}$, hence $\Gamma_n= 0$ only if the Nash equilibrium has been reached. 
}	      
	      \section{Numerical analysis}
In order to solve the Cournot MFG of controls system numerically, we use a finite difference discretized form of the learning algorithm (SPI) defined in \cref{sec:learning}. We recall that  finite difference methods for solving MFG systems has been developed   in \cite{MR2679575,MR2888257} and  have been used in \cite{MR4146720} for solving MFG of controls.
\subsection{Numerical method}
We fix a step size  $h=L/N_L$ in space and   $\Delta t=T/N_T$  in time. We discretize   $Q=[0,T]\times \mathbb{L}$ with   a grid  $ (t_\tau,x_i) \in\mathcal{Q}_{h,\Delta t}$ where $t_\tau=\tau \Delta t$, $\tau \in \{0,...,N_T\}$, and $x_i=ih$, $i\in \{0,...,N_L+1\}$. In order to approximate the boundary condition at $x=L$, we add a ghost point $x_{N_L+1}=(N_L+1)h$. We approximate functions on $Q$ by vectors $(\varphi_{\tau,i})_{\tau \in \{0,...,N_T\}, i\in \{0,...,N_L\}}  \in \mathbb{R}^{(N_T+1)(N_L+2)}$, and $\varphi_{\tau,i}$ approximates the value of a function $\varphi$ at $(t_\tau,x_i)$. We denote by $\varphi_\tau$ the vector $(\varphi_{\tau,i})_{i\in \{0,...,N_L+1\}} \in \mathbb{R}^{N_L+2}$. We then define the following discrete second and first order differential operators:
\begin{equation}\label{discrete Laplacian}
	(\Delta_\sharp \varphi_{\tau})_i =\frac{1}{h^2}\left(\varphi_{\tau,i-1}-2\varphi_{\tau,i}+\varphi_{\tau,i+1}\right), (D_\sharp \varphi_{\tau})_i =\frac{1}{h}\left( \varphi_{\tau,i}-\varphi_{\tau,i-1}\right),\quad 1 \le i \le N_L,
\end{equation}
and the divergence
\begin{equation}\label{discretediv}
	\begin{aligned}
		\text{div}_\sharp(\varphi_{\tau}\,\mathrm{Q}_\tau)_i     & =\frac{1}{h}
		\left( \varphi_{\tau,i+1}\mathrm{Q}_{\tau,i+1} - \varphi_{\tau,i} \mathrm{Q}_{\tau,i}\right),\,\,\,0\leq i\leq N_L-1, \\
		\text{div}_\sharp(\varphi_{\tau}\,\mathrm{Q}_\tau)_{N_L} & =\frac{1}{h}
		\left( - \varphi_{\tau,N_L} \mathrm{Q}_{\tau,N_L}\right).
	\end{aligned}
\end{equation}
Note that, following   the ideas developed in \cite[pp. 67-68]{MR3601001} for MFG on networks, the operator $\text{div}_\sharp$ is defined differently for $0\leq i\leq N_L-1$ and $i=N_L$ in order to deal with the flux boundary condition at the point $x=L$.
We define the discrete $\ell^2$ norm of $\varphi  \in \mathbb{R}^{(N_T+1)(N_L+2)}$ by:
$
	\|\varphi \|_{\ell^2(\mathcal{Q}_{h,\Delta t})}=\lc\sum_{\tau,i}|\varphi_{\tau,i}|^2h\Delta t\rc^{1/2}
$.
We now present the discretized forms of the equations \cref{m update 1,price update,u update 1,q update 1,average} that need to be solved during iteration $n$ of the (SPI) algorithm. The update steps \cref{m update 1}-\cref{average} become:
	\begin{equation}\label{M update 1}
		      \begin{aligned}
			       & \frac{M^{(n)}_{\tau+1,i}-M^{(n)}_{\tau,i}}{\Delta t}-(\Delta_\sharp \sigma^2 M^{(n)}_{\tau+1})_i-\text{div}_\sharp(M^{(n)}_{\tau+1}\,\bar{\mathrm{Q}}^{(n)}_\tau)_i=0\,\,\forall i\in \{0,...,N_L\}, \\
			       & M^{(n)}_{\tau,0}=0,\,  \sigma^2_{N_L}M^{(n)}_{\tau,N_L}=\sigma^2_{N_L+1}M^{(n)}_{\tau,N_L+1}.
		      \end{aligned}
	      \end{equation}
 \begin{equation}\label{disc P update}
		      \mathrm{P}^{(n)}_{\tau}=P\Big(\tau,\sum_i\bar{W}^{(n)}_{\tau,i}h\Big),\,\,\bar{W}^{(n)}_{\tau,i}=M^{(n)}_{\tau+1,i}\,\bar{\mathrm{Q}}^{(n)}_{\tau,i}.
	      \end{equation}
\begin{equation}\label{U update 1}
		      \begin{aligned}
			       & \frac{U_{\tau+1,i}^{(n)}-U_{\tau,i}^{(n)}}{\Delta t}+\sigma_i^2(\Delta_\sharp U^{(n)}_{\tau})_i-\lambda U_{\tau,i}^{(n)}+\bar{\mathrm{Q}}^{(n)}_{\tau,i}\Big(\mathrm{P}^{(n)}_{\tau}-(D_\sharp U^{(n)}_{\tau})_i\Big)
			       -\gamma \bar{\mathrm{Q}}^{(n)}_{\tau,i}-\kappa \big(\bar{\mathrm{Q}}^{(n)}_{\tau,i}\big)^2=0
			       \quad \forall i\in \{0,...,N_L\},  \\                                                                                                                                                                                                                                                                                        &\quad U^{(n)}_{\tau,0}=0,\,U^{(n)}_{\tau,N_L}=U^{(n)}_{\tau,N_L+1}.
		      \end{aligned}
	      \end{equation}
\begin{align}\label{Q update}
		      \mathrm{Q}^{(n+1)}_{\tau,i}&=\min \left\{\Big(\frac{\mathrm{P}^{(n)}_{\tau}-\gamma-(D_\sharp U^{(n)}_{\tau})_i}{2\kappa}\Big)_+,\frac{C_P}{2\kappa}\right\},\\
\label{disc average}
		      \bar{\mathrm{Q}}^{(n+1)}_{\tau,i}&=(1-\zeta_n)\bar{\mathrm{Q}}^{(n)}_{\tau,i}+\zeta_n{\mathrm{Q}}^{(n+1)}_{\tau,i}.
	      \end{align}
	      {For each $\mathrm{P}^{(n)}_{\tau}$, we can use the following scheme to discretize the HJB \cref{best response}, where $v^{(n)}(t,x)$ is approximated by $V^{(n)}_{\tau,i}$.
\begin{equation}\label{V}
		      \begin{aligned}
			       & \frac{V_{\tau+1,i}^{(n)}-V_{\tau,i}^{(n)}}{\Delta t}+\sigma_i^2(\Delta_\sharp V^{(n)}_{\tau})_i-\lambda V_{\tau,i}^{(n)}+\sup_{\mathrm{Q}_{\tau,i}}\Big\{\mathrm{Q}_{\tau,i}\Big(\mathrm{P}^{(n)}_{\tau}-(D_\sharp V^{(n)}_{\tau})_i\Big)
			    -\gamma \mathrm{Q}_{\tau,i}-\kappa \big(\mathrm{Q}_{\tau,i}\big)^2\Big\}=0\\
			     & \forall i\in \{0,...,N_L\},                                                                                                                                                                                                                                                                                          \quad V^{(n)}_{\tau,0}=0,\,V^{(n)}_{\tau,N_L}=V^{(n)}_{\tau,N_L+1}.
		      \end{aligned}
		 \end{equation}
		 }
We introduce the following \textit{discrete smoothed policy iteration algorithm}. 
\begin{algorithm}\label{alg:SPI-Cournot-MFG}
\caption{Discretized Smoothed Policy Iteration for Cournot MFG}
\KwData{Initial $\mathrm{Q}^{(0)}_{\tau,i}$ such that $0\leq \mathrm{Q}^{(0)}_{\tau,i} \leq \frac{C_P}{2\kappa}$, for all $\tau, i$; Set $n = 0$ and $\mathrm{Q}^{(0)} = \bar{\mathrm{Q}}^{(0)}$.}
\KwResult{solution $(U,M,\mathrm{Q})$.}
\Do{$\|\mathrm{Q}^{(n+1)}-\bar{\mathrm{Q}}^{(n)}\|_{\ell^2(\mathcal{Q}_{h,\Delta t})}> \epsilon$}{
\textit{Mean field update. } Set $M^{(n)}_{0}=M_{0}$ and solve \cref{M update 1} for all $\tau\in \{0,\dots, N_T-1\}$.\\
\textit{Price update. } Let $\mathrm{P}_\tau^{(n)}$ be defined by~\cref{disc P update}  for all $\tau\in \{0,\dots, N_T-1\}$.\\
\textit{Policy evaluation. } Set $U^{(n)}_{N_T}=U_{N_T}$ and solve \cref{U update 1} for all $\tau\in \{0,\dots, N_T-1\}$.\\
\textit{Policy update. } Let $\mathrm{Q}_\tau^{(n+1)}$ be defined by~\cref{Q update} for all $\tau\in \{0,\dots, N_T-1\}$.\\
\textit{Policy smoothing. } Let  $\bar{\mathrm{Q}}^{(n+1)}_{\tau}$ be defined by~\cref{disc average} for all $\tau\in \{0,\dots, N_T-1\}$.\\
Increment $n$.
}
\end{algorithm}

{Notice that in algorithm $1$, $V^{(n)}$ is not needed if we only want to compute the solution to system \cref{MFG}. To illustrate the numerical convergence of exploitability $\Gamma_n$ from \cref{def exploit} for conceptual analysis, we can use a variant of algorithm $1$ where we solve \cref{V} for  $V^{(n)}$, between updating $\mathrm{P}_\tau^{(n)}$ and updating $\mathrm{Q}_\tau^{(n+1)}$. For solving \cref{V}, we use an inner iteration loop (indexed by $\mathrm{k}$) between \textit{Policy evaluation} and \textit{Policy update} until $V^{(n,\mathsf{k})}$ and $V^{(n,\mathrm{k}+1)}$ are sufficiently close. For numerical approximation of $\Gamma_n$ in \cref{def exploit} we use $\sum_ihV_{0,i}^{(n)}M_0-\sum_ihU_{0,i}^{(n)}M_0$.}

\begin{remark}
	In particular, the discretized second order derivatives in \cref{M update 1,U update 1} are
	\begin{align*}
		(\Delta_\sharp \sigma^2 M^{(n)}_{\tau+1})_i & =\frac{1}{h^2}\lc \sigma^2_{i-1}M^{(n)}_{\tau+1,i-1}-2\sigma^2_iM^{(n)}_{\tau+1,i}+\sigma^2_{i+1}M^{(n)}_{\tau+1,i+1}\rc, \\
		\sigma_i^2(\Delta_\sharp U^{(n)}_{\tau})_i  & =\frac{\sigma^2_i}{h^2}\lc U^{(n)}_{\tau,i-1}-2U^{(n)}_{\tau,i}+U^{(n)}_{\tau,i+1}\rc.
	\end{align*}
	We denote
	$
		\mathcal A:=(\sigma^2\Delta_\sharp \cdot)_i-\mathrm{Q}_{\tau,i}(D_\sharp \cdot)_i$ as an linear operator acting on $\varphi$ such that $\varphi_{\tau,0}=0$, $\varphi_{\tau,N_L}=\varphi_{\tau,N_L+1}$. Analogously, denote by $\mathcal A^*:=(\Delta_\sharp \sigma^2 \cdot)_i+\text{div}_\sharp(\mathrm{Q}_\tau \cdot)_i$ an operator acting on $\varphi$ such that 
$\varphi_{\tau,0}=0$, $\sigma^2_{N_L}\varphi_{\tau,N_L}=\sigma^2_{N_L+1}\varphi_{\tau,N_L+1}$. From summation by parts, $\mathcal A$ and $\mathcal A^*$ are adjoint operators, i.e. \cref{adjoint} is preserved at the discrete level. On the theoretical side, this implies our numerical scheme is justified even when the solution $m$ to the FPK is in the weak sense. On the practical side, we may construct a sparse matrix for $\mathcal A$ and obtain $\mathcal A^*$ directly by transposing it.
\end{remark}
\begin{remark}
	We show the consistency of the scheme with the flux boundary condition for $m$ at $x=L$, if $\frac{\Delta t}{h}$ is kept constant. Note that the scheme, being implicit, does not need to satisfy the CFL condition. Since $\sigma^2_{N_L}M_{\tau,N_L}=\sigma^2_{N_L+1}M_{\tau,N_L+1}$,  then
	$$
		-(\Delta_\sharp \sigma^2 M_{\tau+1})_{N_L}=\frac{\sigma^2_{N_L} M_{\tau+1,N_L}-\sigma^2_{N_L-1}M_{\tau+1,N_L-1}}{h^2},
	$$
	and therefore the first equation in \cref{M update 1} can be written as
	$$
		h\frac{M_{\tau+1,N_L}-M_{\tau,N_L}}{\Delta t}+\frac{\sigma^2_{N_L} M_{\tau+1,N_L}-\sigma^2_{N_L-1}M_{\tau+1,N_L-1}}{h}+
		M_{\tau+1,N_L} \mathrm{Q}_{\tau,N_L}=0.
	$$
	We let $\Delta t, h\rightarrow 0$ with $\Delta t/h$ constant to obtain the boundary condition for $m$ in  \eqref{MFG} (\textit{ii}).
\end{remark}

\subsection{Numerical results}

In the first example, we consider the domain with $L=6$ and $T=15$. The price function is defined by \cref{DP2} . For time and space discretization, we use $N_x=300$ and $N_t=2000$. The initial condition is 
$$
\displaystyle m_0(x)=\frac{\lc e^{-0.2(x-3)^2}-0.7\rc_+}{\int_0^L\lc e^{-0.2(x-3)^2}-0.7\rc_+ dx}.
$$
For the first example, we consider both cases with the diffusion process being either a Brownian motion (BM) or a geometric Brownian motion (GBM). We set the parameters as $\lambda=0,\,\rho=0.01,\,\sigma=0.1,\,\eta=1.2,\,\delta=0.2,\,\gamma=2,\,\kappa=5,\, E=3.$
$$
\text{Test 1 (BM)}: \sigma^2(x)=\sigma^2,\,\text{Test 1 (GBM)}: \sigma^2(x)=(\sigma x)^2.
$$
\begin{figure}[h!]
	\centering
	\caption{Test 1 (BM): convergence}\label{an}
	\begin{tabular}{cc}
	\includegraphics[width=0.45\textwidth]{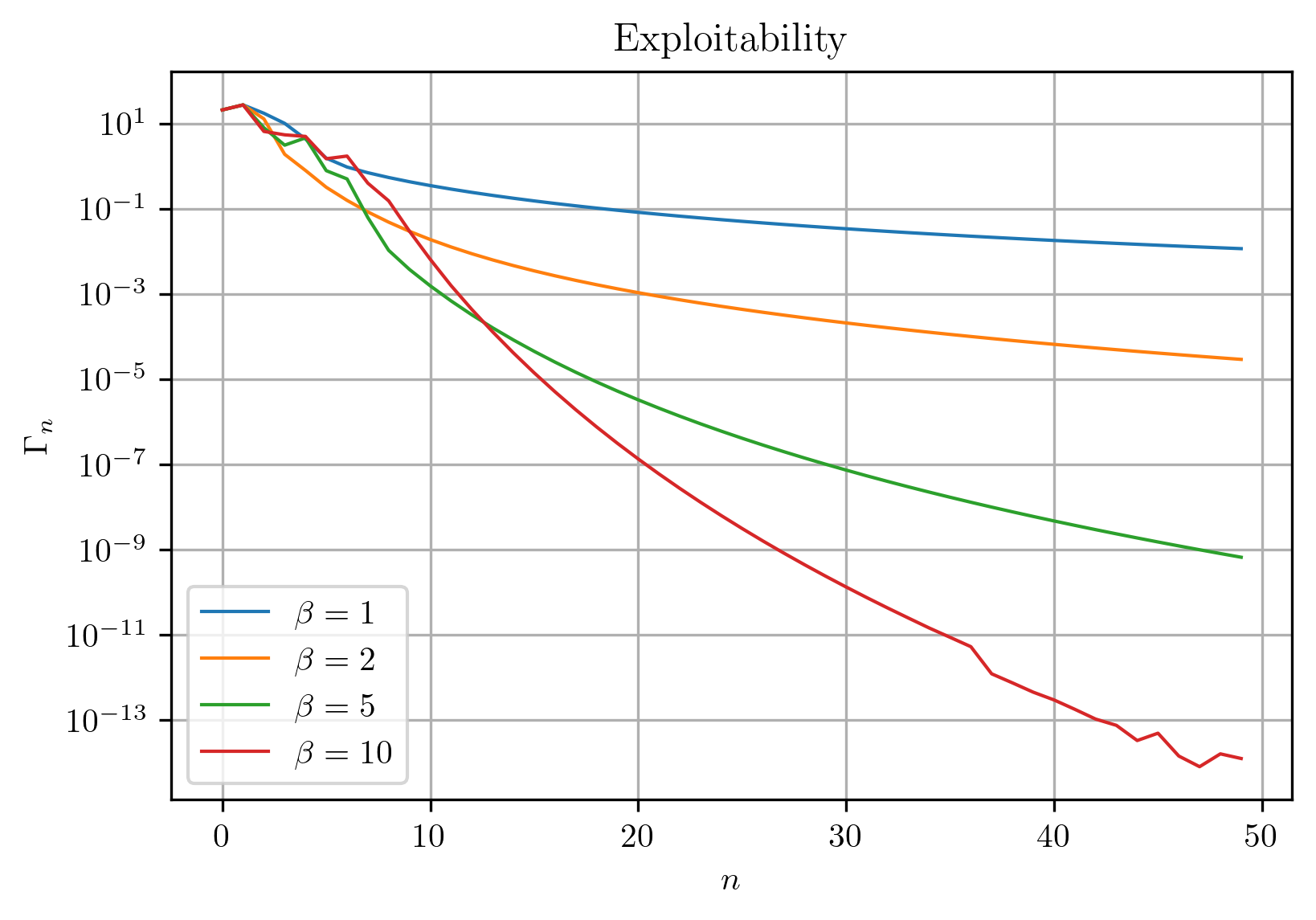} &
	\includegraphics[width=0.45\textwidth]{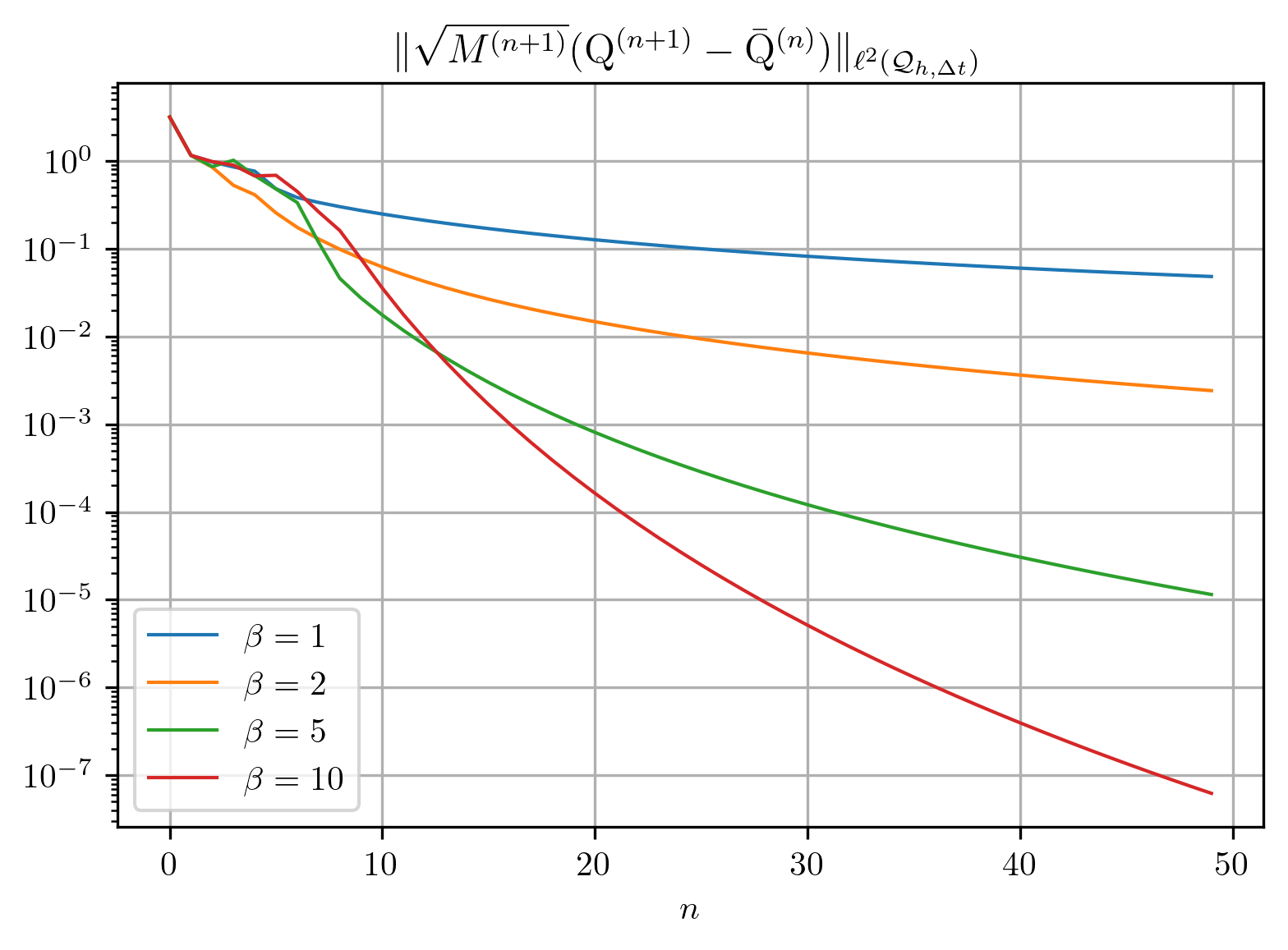} \\
	\end{tabular}
\end{figure}

\begin{figure}[h!]
	\centering
	\caption{Solution to test 1 (BM): density (left) and value function (right)}\label{test1 sol1}
	\begin{tabular}{cc}
	\includegraphics[width=0.45\textwidth]{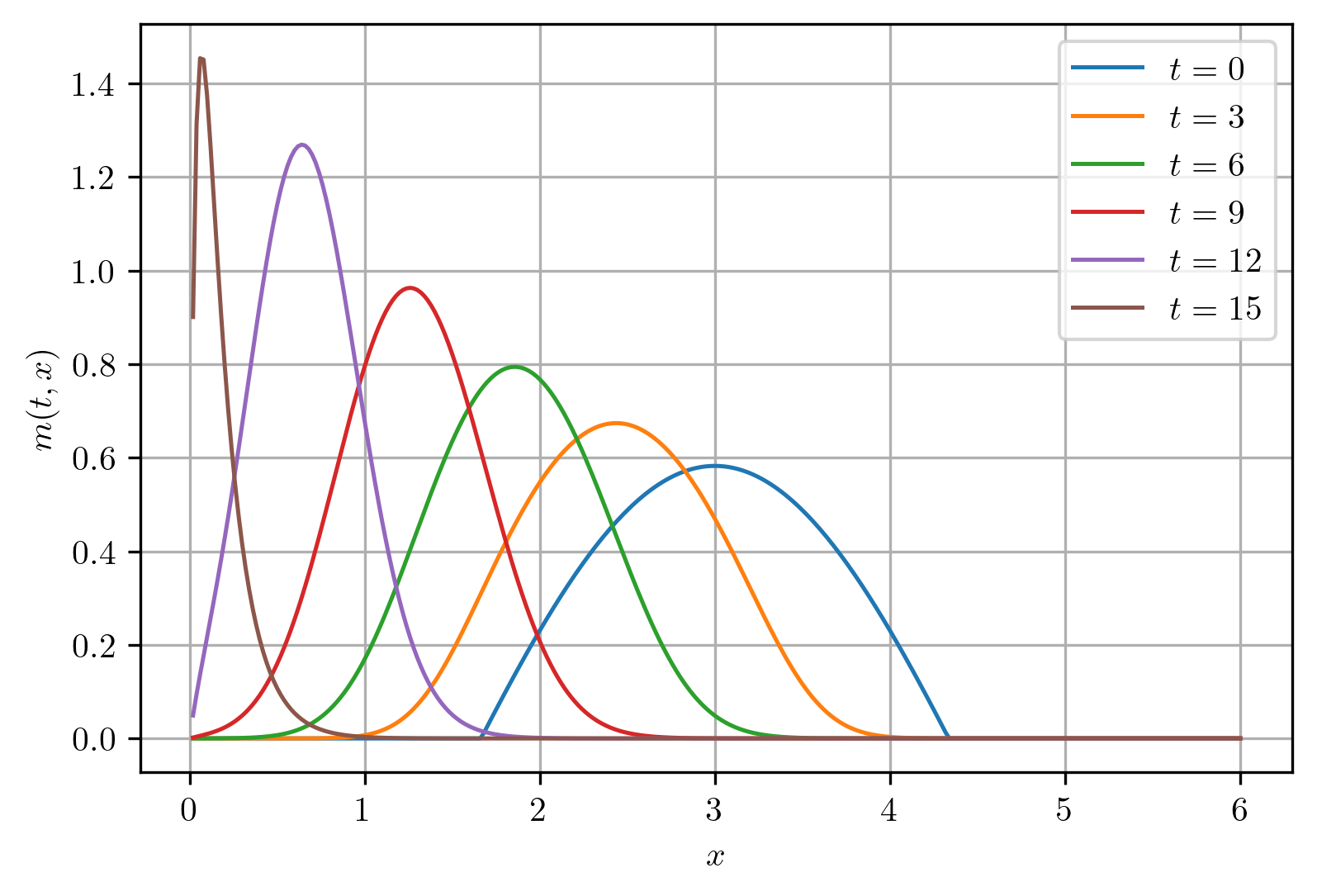} &
	\includegraphics[width=0.45\textwidth]{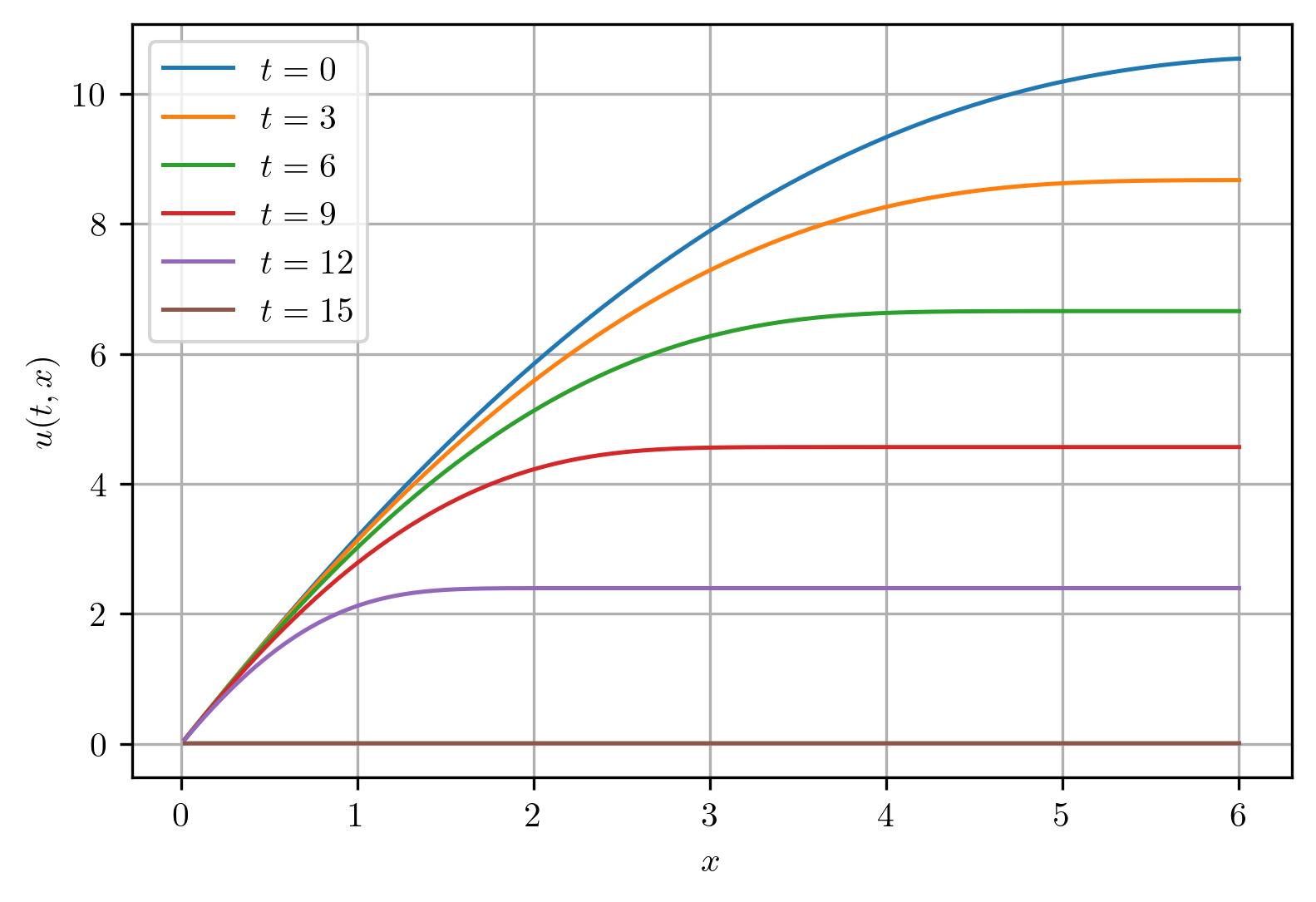} \\
	\end{tabular}
\end{figure}

\begin{figure}[h!]
	\centering
	\caption{Test 1 (BM): optimal control $q^*$ in $2D$ (left) and $3D$ (right)}\label{q 23D}
	\begin{tabular}{cc}
	\includegraphics[width=0.45\textwidth]{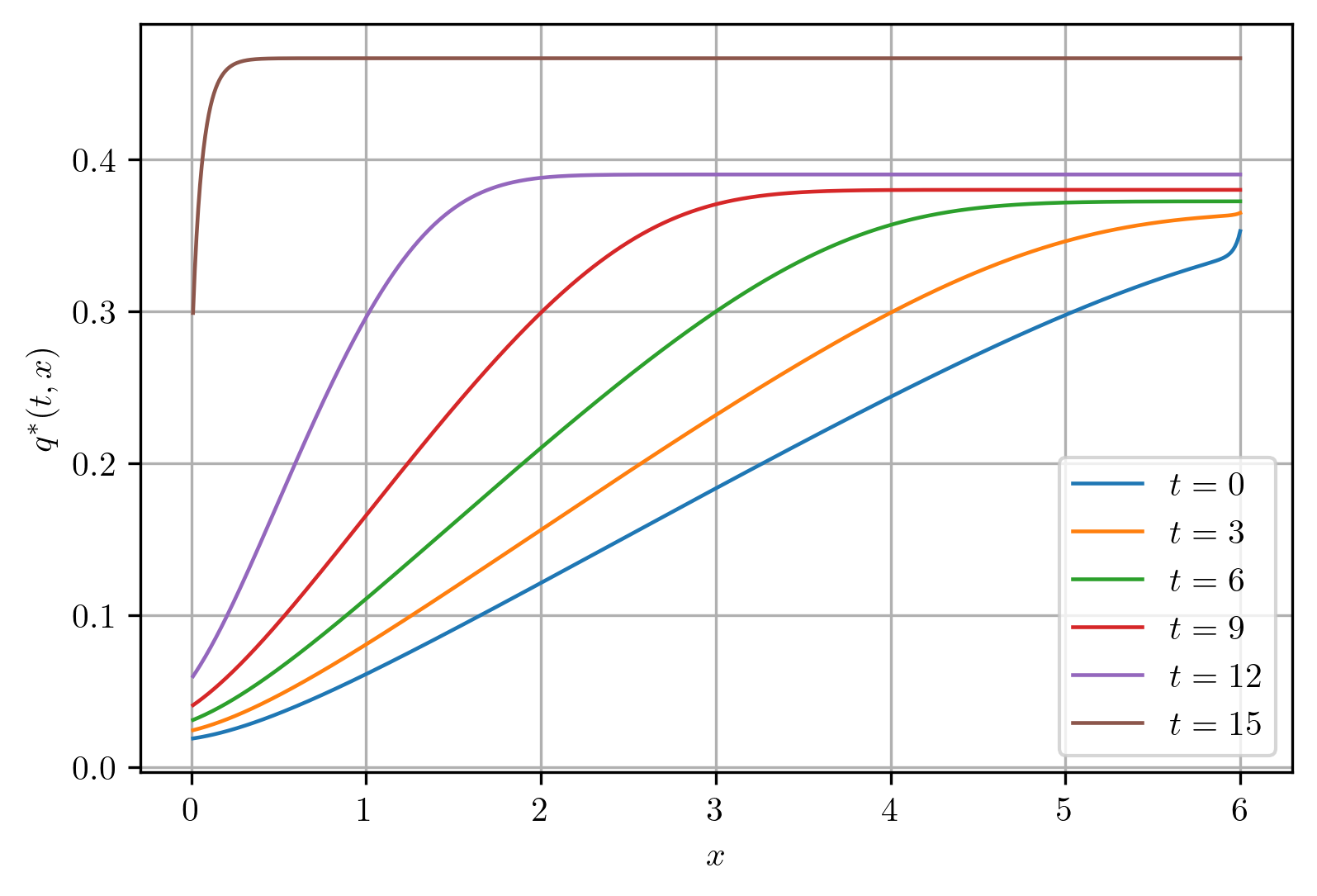} &
	\includegraphics[width=0.45\textwidth]{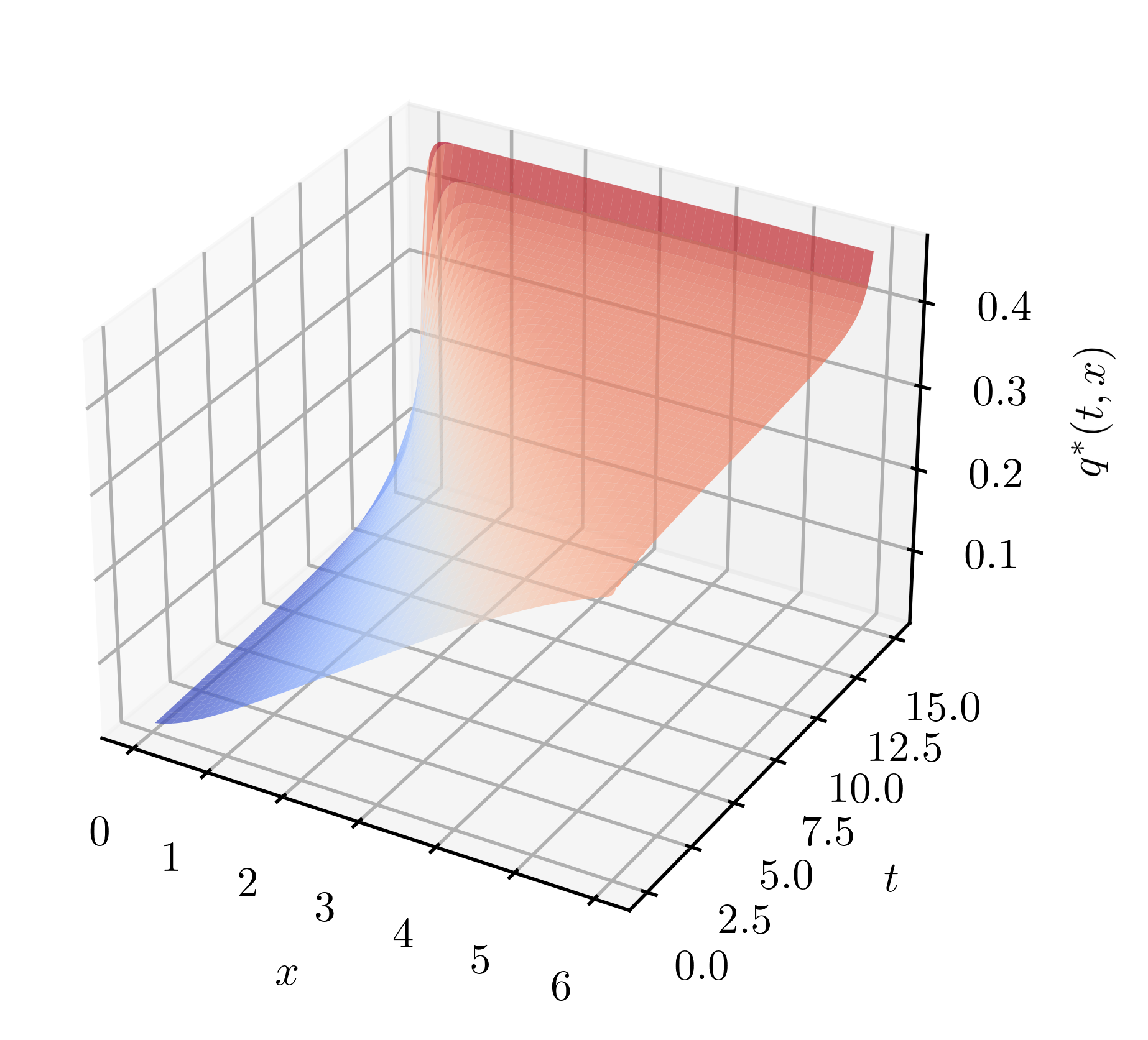} \\
	\end{tabular}
\end{figure}

\begin{figure}[h!]
	\centering
	\caption{Test 1 (GBM): convergence}\label{anG}
	\begin{tabular}{cc}
	\includegraphics[width=0.45\textwidth]{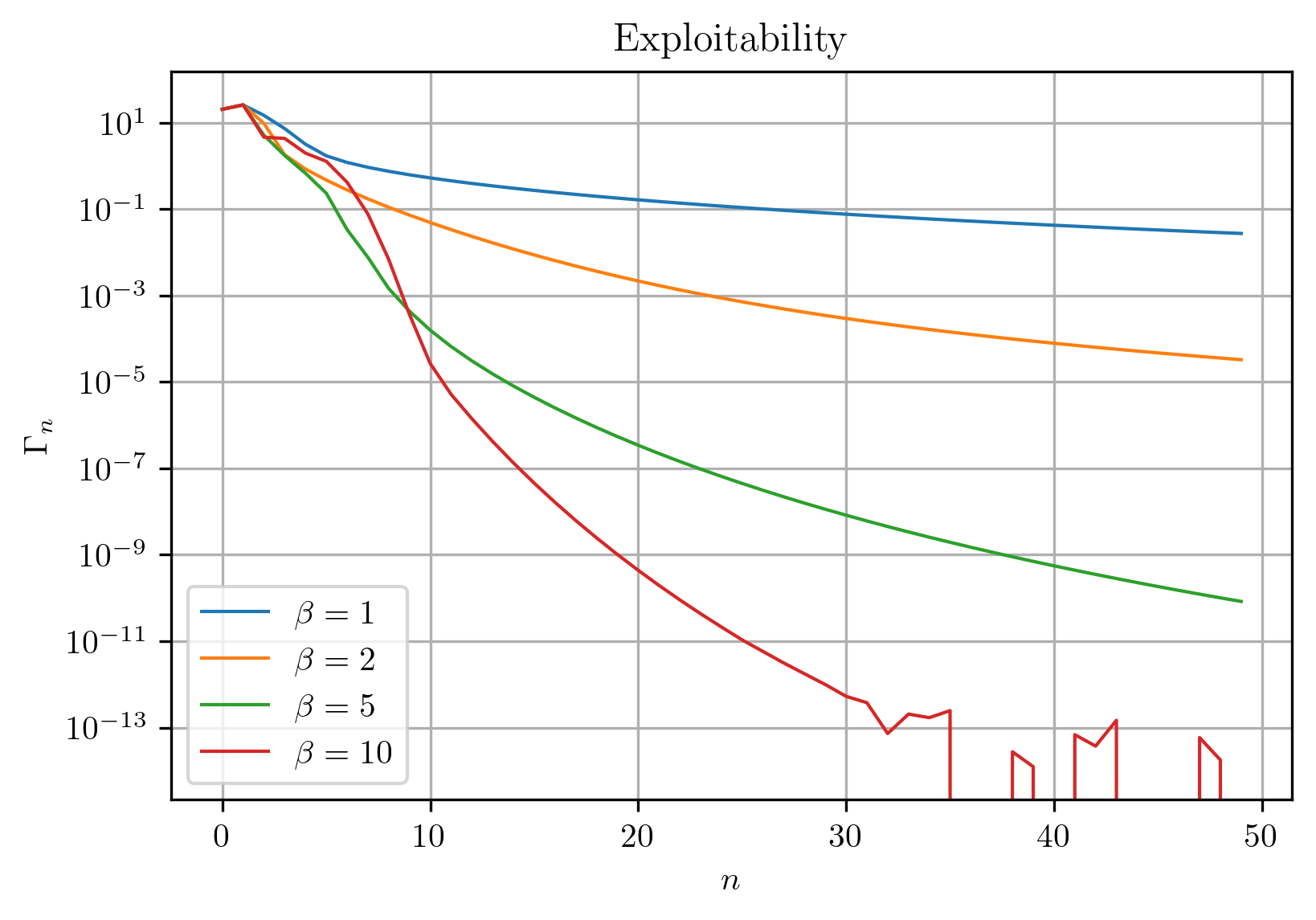} &
	\includegraphics[width=0.45\textwidth]{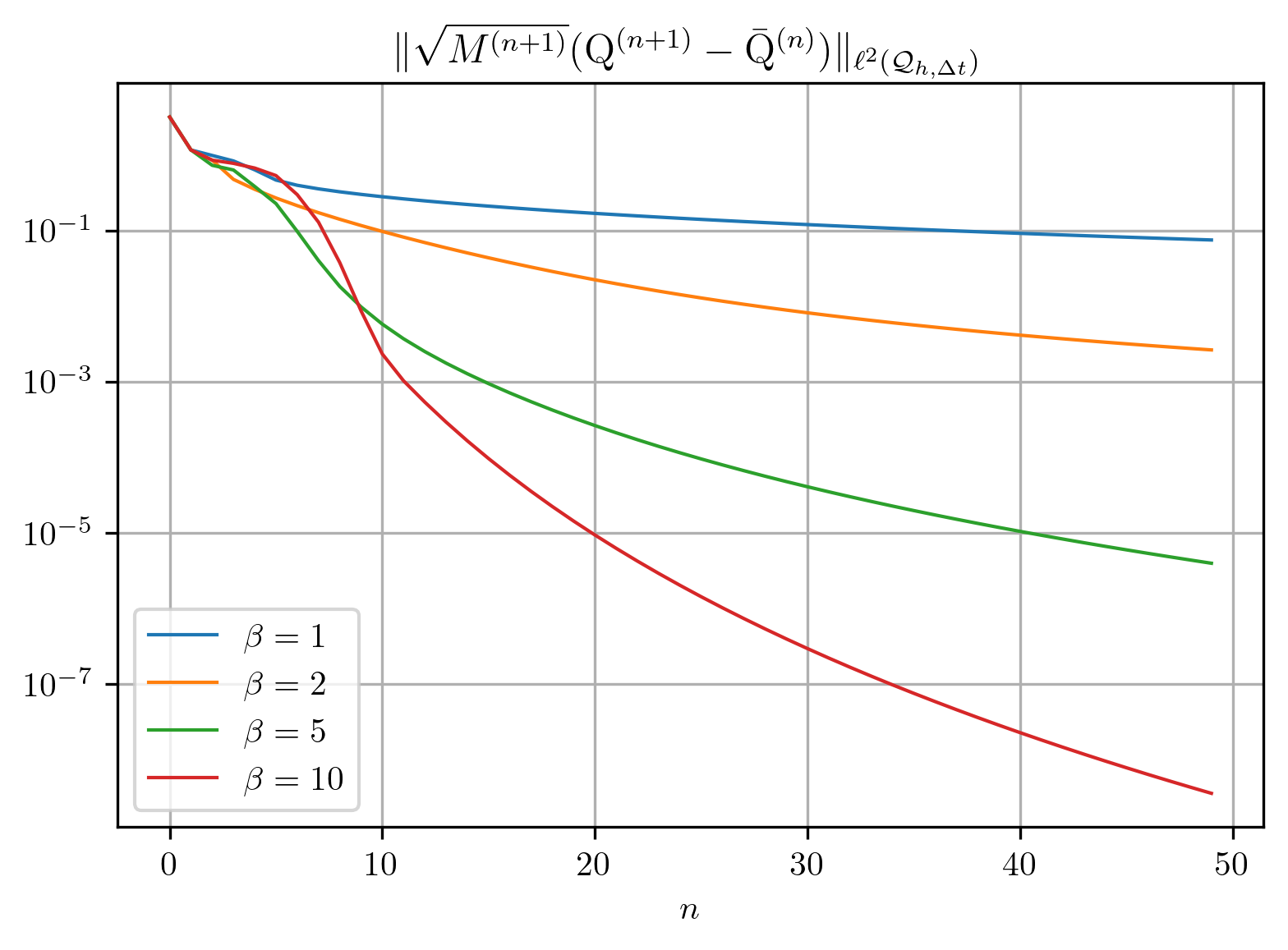} \\
	\end{tabular}
\end{figure}

\begin{figure}[h!]
	\centering
	\caption{Solution to test 1 (GBM)}\label{test1 solG}
	\begin{tabular}{cc}
	\includegraphics[width=0.45\textwidth]{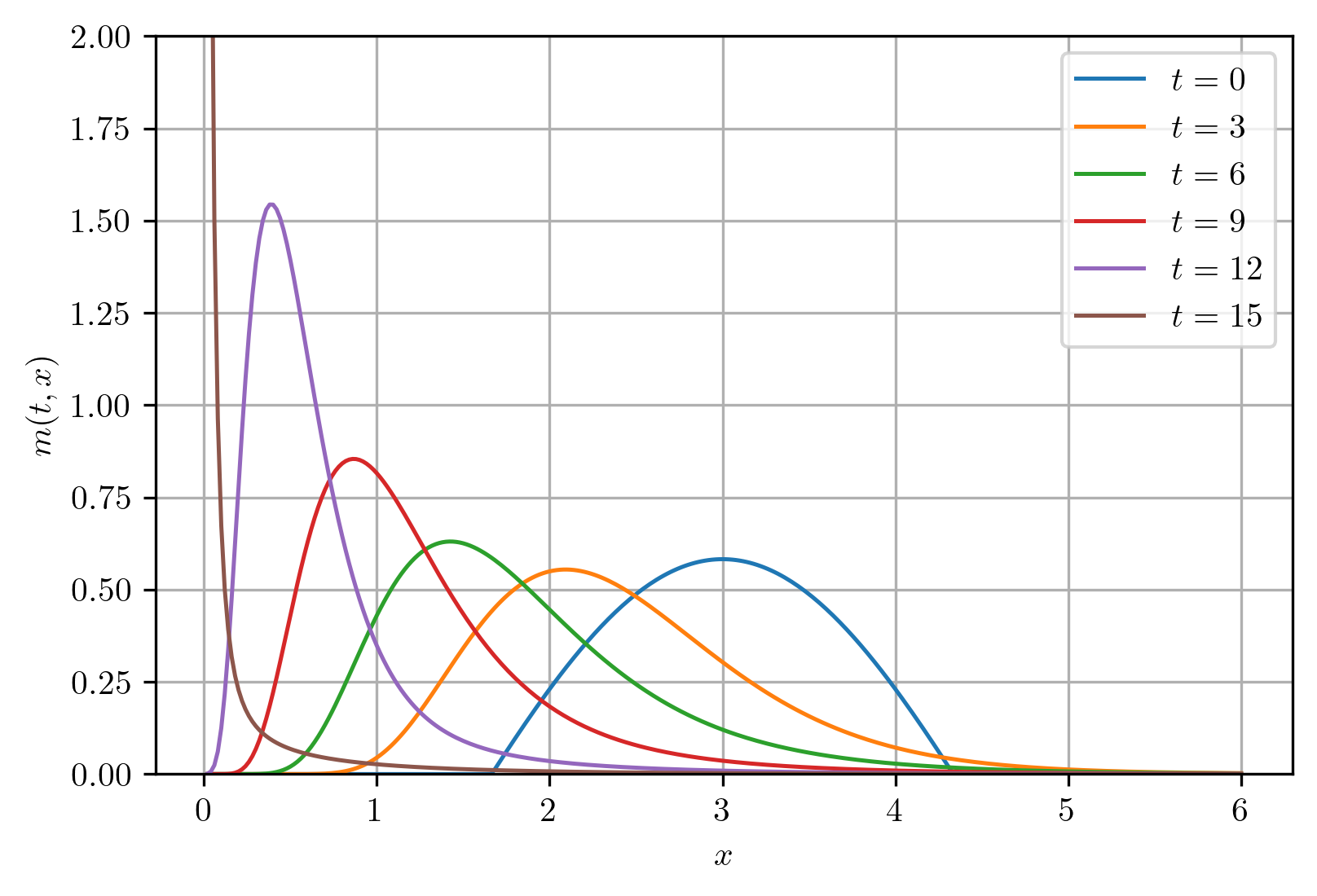} &
	\includegraphics[width=0.45\textwidth]{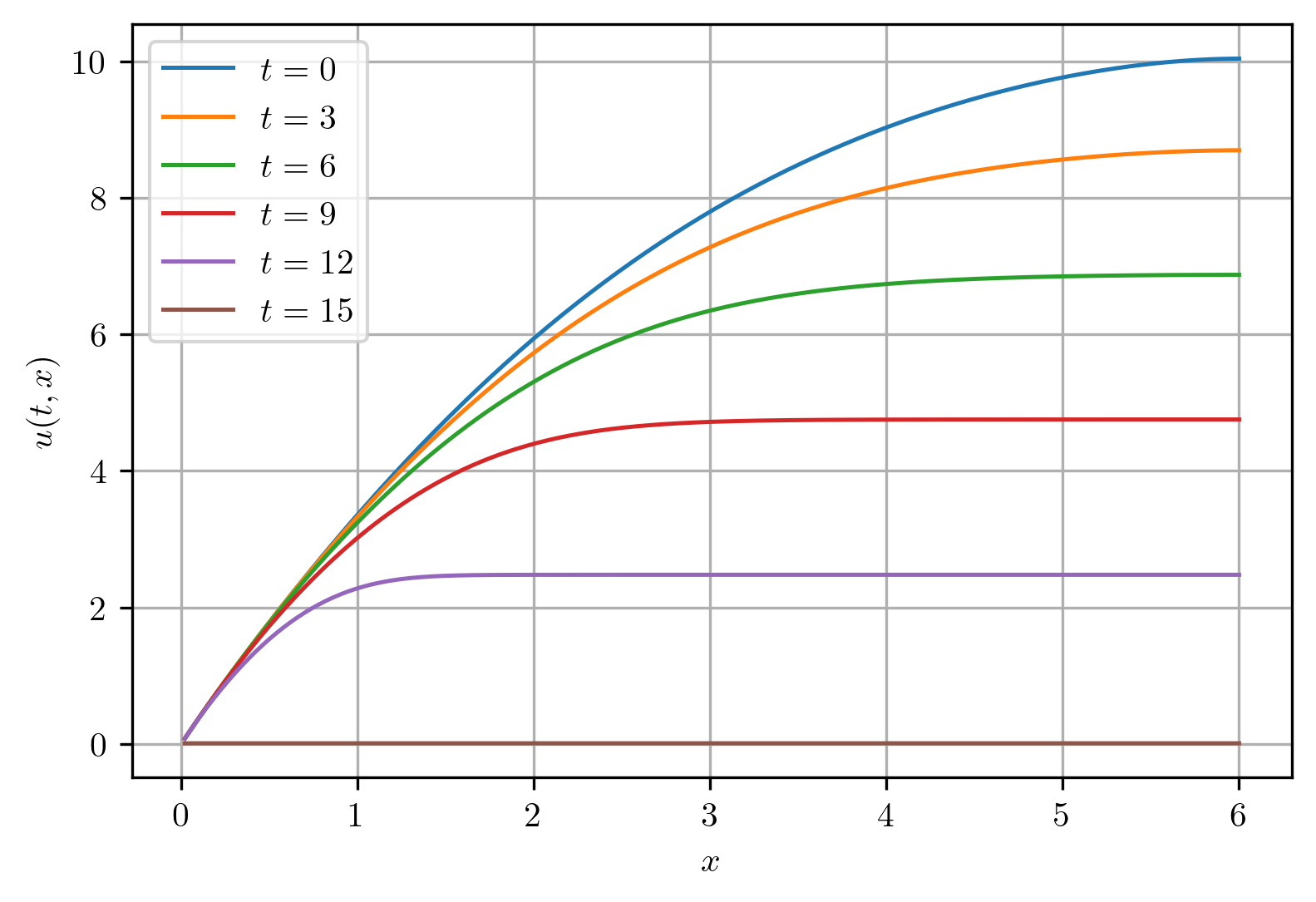} \\
	\end{tabular}
\end{figure}

\begin{figure}[h!]
	\centering
	\caption{Test 1 (GBM): optimal control $q^*$ in $2D$ (left) and $3D$ (right)}\label{testG q 23D}
	\begin{tabular}{cc}
	\includegraphics[width=0.45\textwidth]{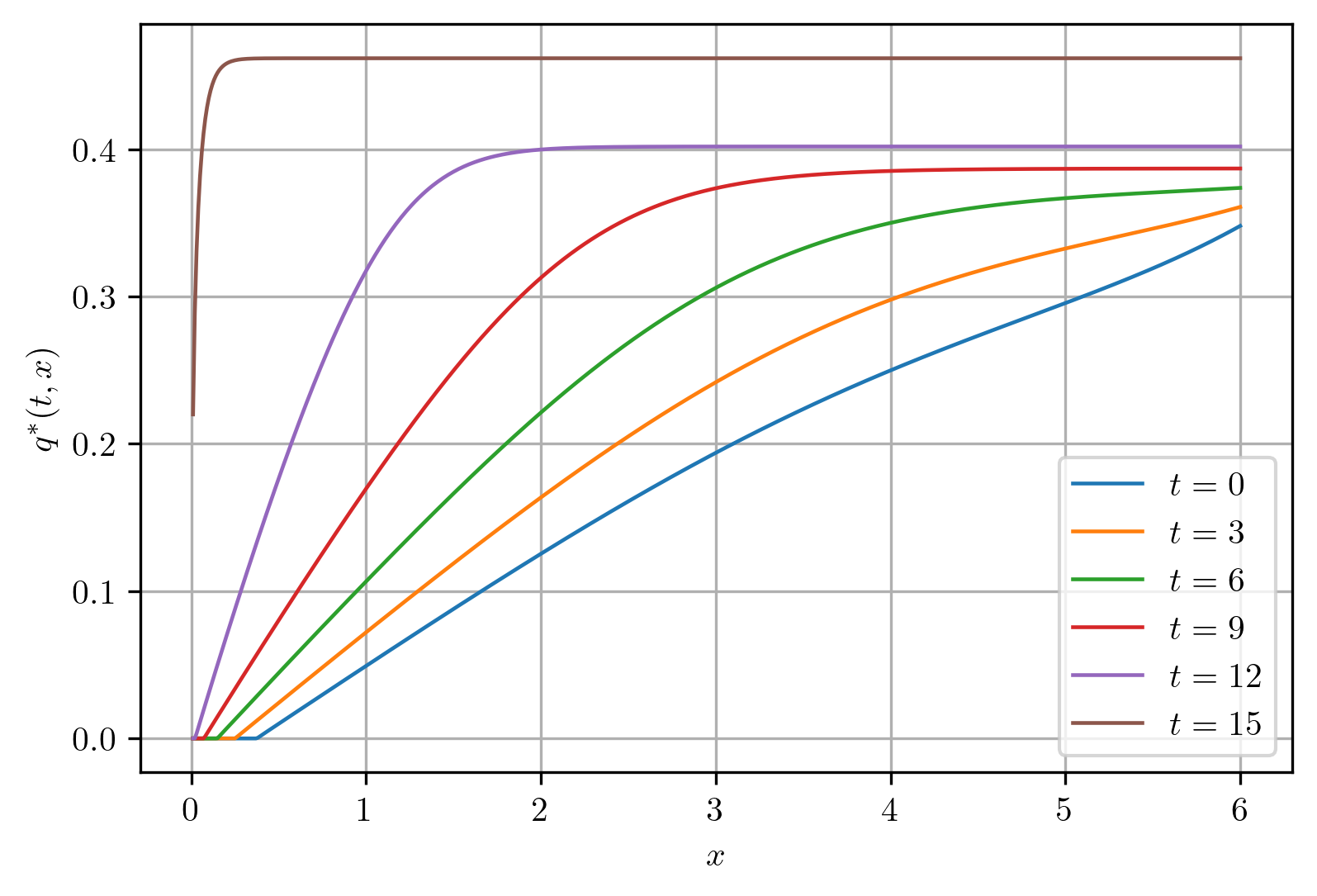} &
	\includegraphics[width=0.45\textwidth]{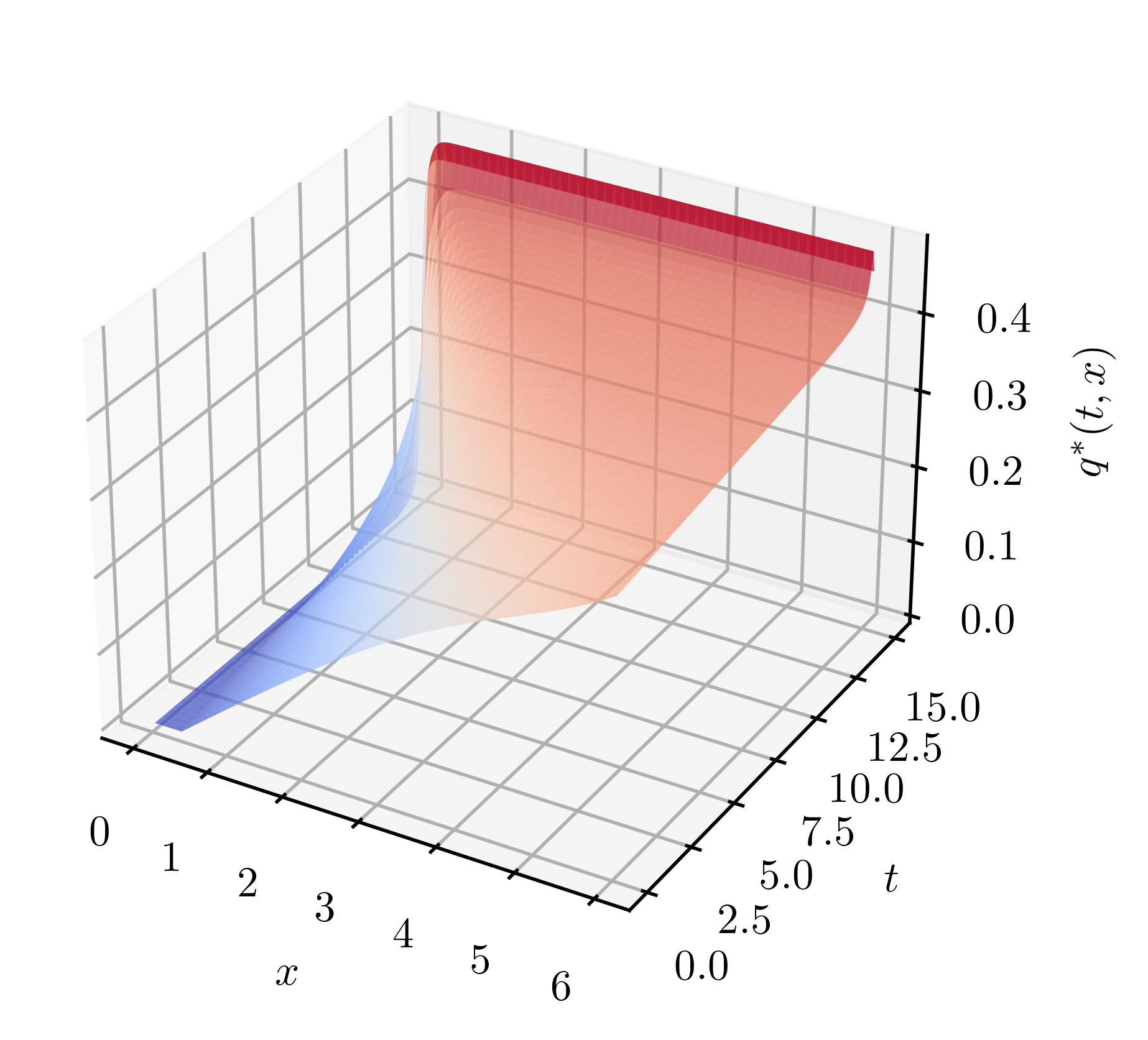} \\
	\end{tabular}
\end{figure}

{In \cref{an} (left) and \cref{anG} (left) we plot the decay of exploitability with different learning rates $\zeta_n$ to illustrate empirical convergence. The weighted sum $\|\sqrt{M^{(n+1)}}(\mathrm{Q}^{(n+1)}-\bar{\mathrm{Q}}^{(n)})\|_{\ell^2(\mathcal{Q}_{h,\Delta t})}$ (right plots) is also a useful measure of convergence. It is the discretized form of $\sqrt{a_n}$ defined in \cref{a_n 2}. In \cref{test1 sol1} and \cref{test1 solG}, the density shifts to the left as producers exhaust their resources.} \par
From the definition of the value function in \cref{valuefunction}, it is clear that for $\lambda=0$, we have $u(t_1,x)\geq u(t_2,x)$ if $0\leq t_1<t_2\leq T$. We verify this in \cref{test1 sol1} and \cref{test1 solG}. From \cref{test1 sol1} and \cref{test1 solG} we also observe that $u(t,\cdot)$ a concave function of $x$ for all $t\in [0,T]$. In \cref{q 23D} and \cref{testG q 23D}, we can see $\partial_xq^*(t,x)\geq 0$ which is consistent with $u$ being concave. \par

The GBM case is not completely compatible with \ref{assumption4} for our results, as $\sigma^2(x)$ is degenerate at $x=0$. We can still get reasonable approximations with the numerical method in this paper, as the discretization method is robust even for approximating viscosity solution to degenerate HJB equations. \par
In the next example we consider the oil production model in  \cite[Section 3.2.2]{MR2762362}, see also~\cite{gueant2010mean} for more details. The price function is defined by \cref{DP2} and the diffusion is GBM, i.e. $\sigma^2(x)= (\sigma x)^2$, $\sigma=0.05$. The other parameters are taken as, following \cite[p. 36]{gueant2010mean}:
$
L=60,T=150,\lambda=0.05,\gamma=10,\kappa=50,E=40,\rho=0.02,\eta=1.2,\delta=0.1.
$
For space and time discretization, we use $N_x=600$ and $N_t=1500$ so that $\Delta t = h = 0.1$. The initial condition is taken as 
$$
\displaystyle m_0(x)=\frac{\lc e^{-0.0008(x-30)^2}-0.7\rc_+}{\int_0^L\lc e^{-0.0008(x-30)^2}-0.7\rc_+ dx}.
$$
We have obtained in \cref{price and production} the same results as Gu\'eant, Lasry and Lions~\cite[Fig. 8 and Fig 9, p. 36-37]{gueant2010mean}. \cref{Oilan} shows the convergence of exploitability and $\sqrt{a_n}$. \cref{oil density} shows the evolution of reserve density and total mass over time.  As mentioned in ~\cite{gueant2010mean}, oil production increases and then decreases, which can be viewed as a form of the so-called Hubbert peak. 

\begin{figure}[h!]
	\centering
	\caption{Oil production model: convergence}\label{Oilan}
	\begin{tabular}{cc}
	\includegraphics[width=0.45\textwidth]{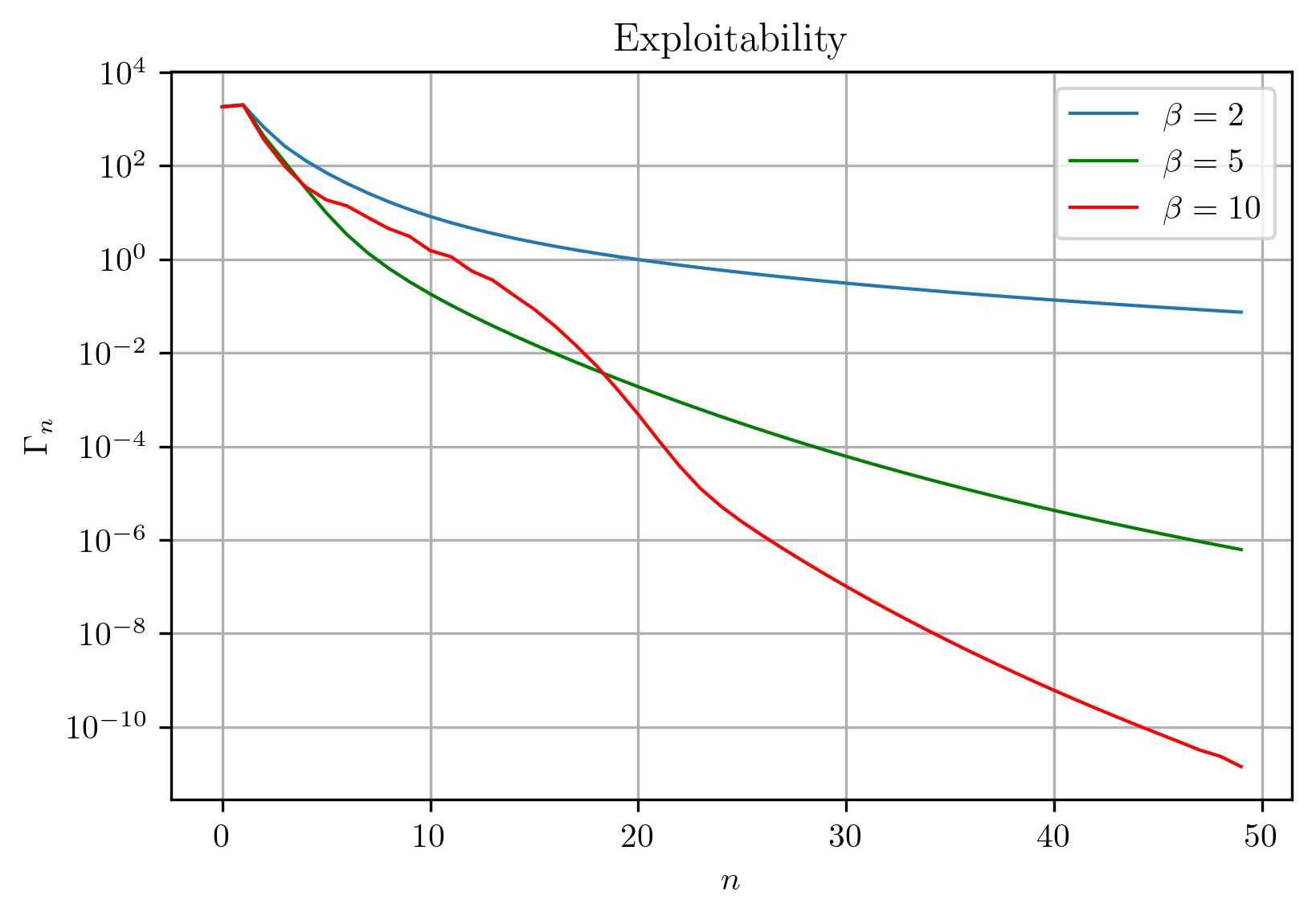} &
	\includegraphics[width=0.45\textwidth]{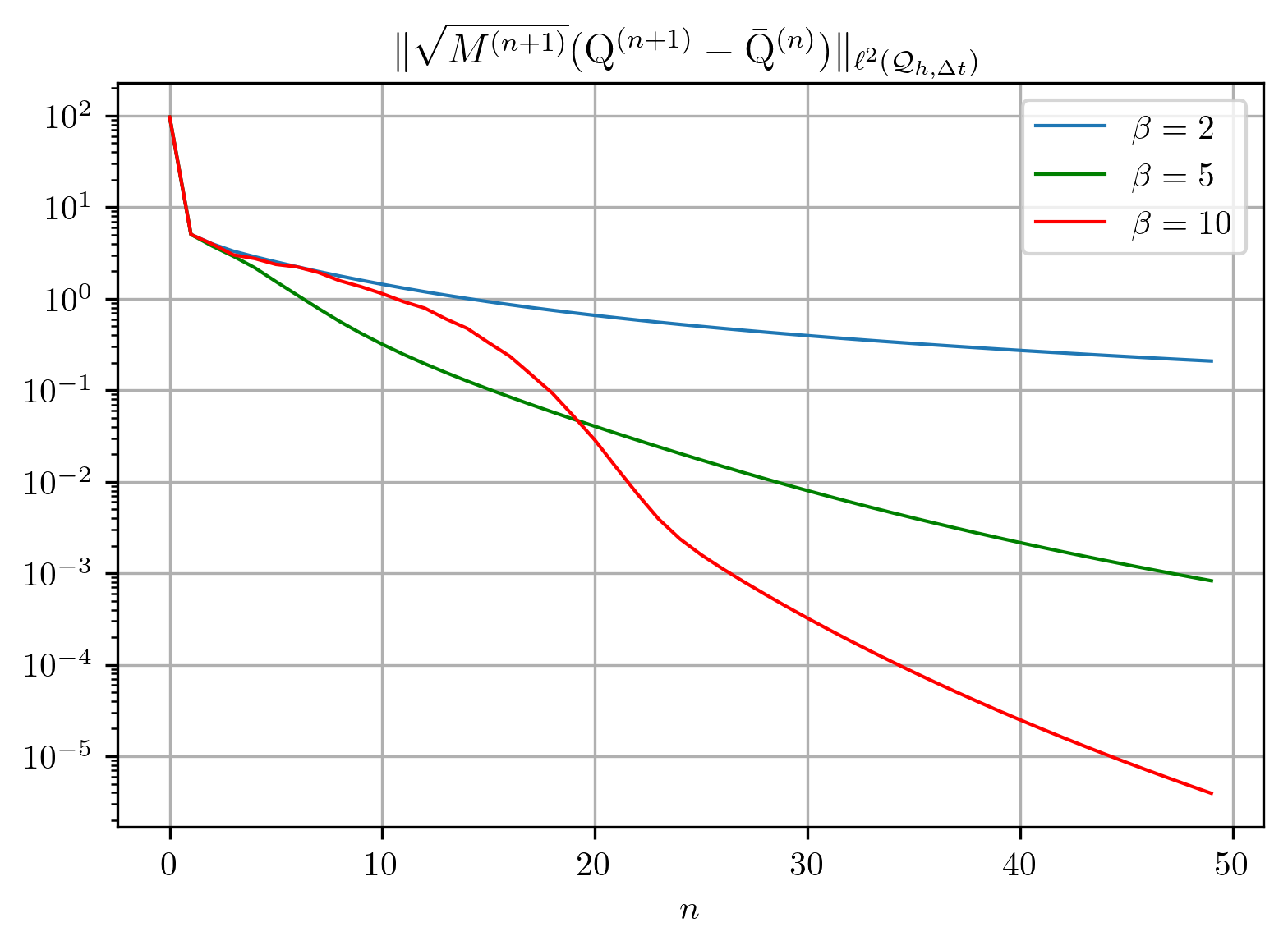} \\
	\end{tabular}
\end{figure}

\begin{figure}[h!]
	\centering
	\caption{ Evolution of price (left) and aggregate production (right)}\label{price and production}
	\begin{tabular}{cc}
	\includegraphics[width=0.45\textwidth]{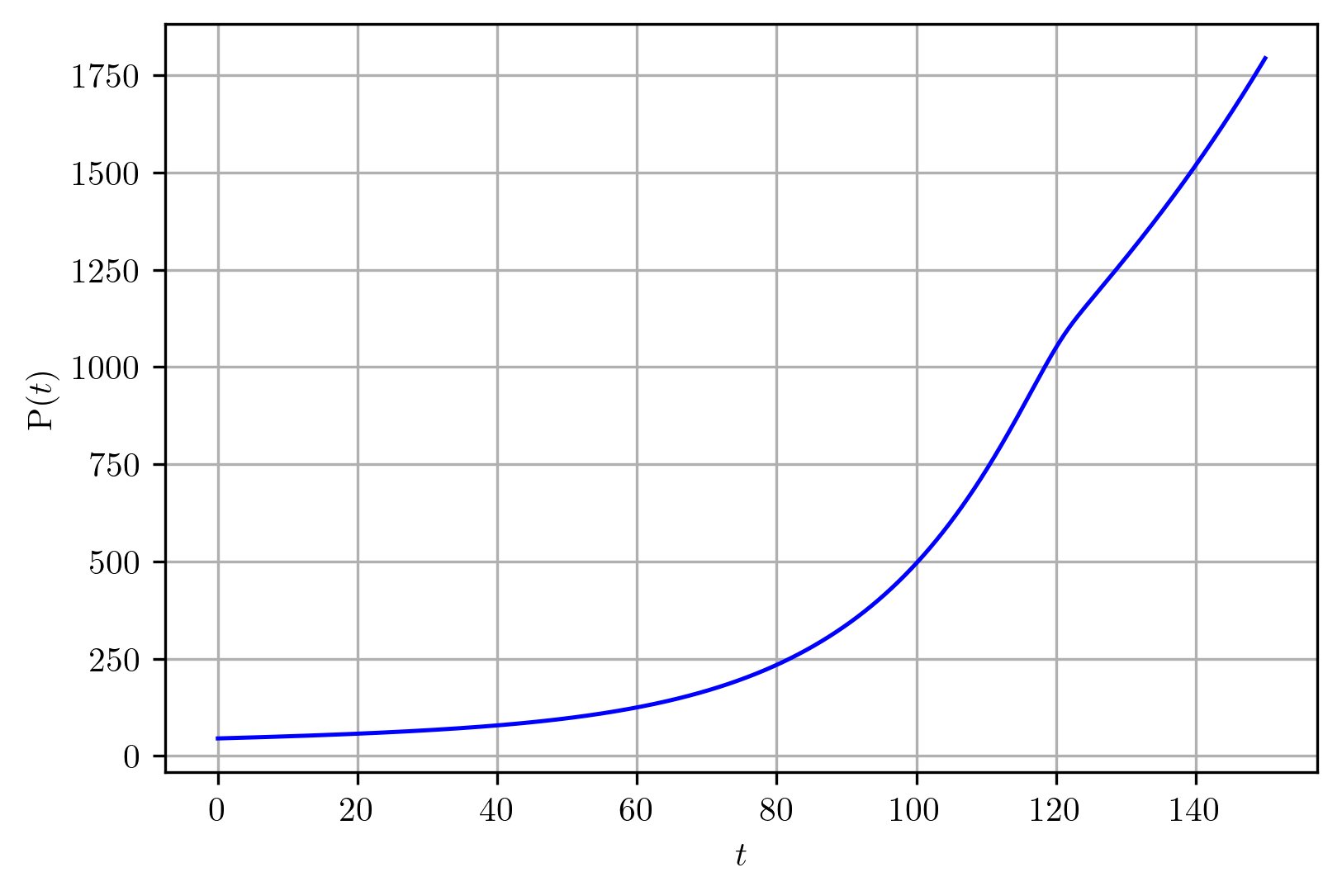} &
	\includegraphics[width=0.45\textwidth]{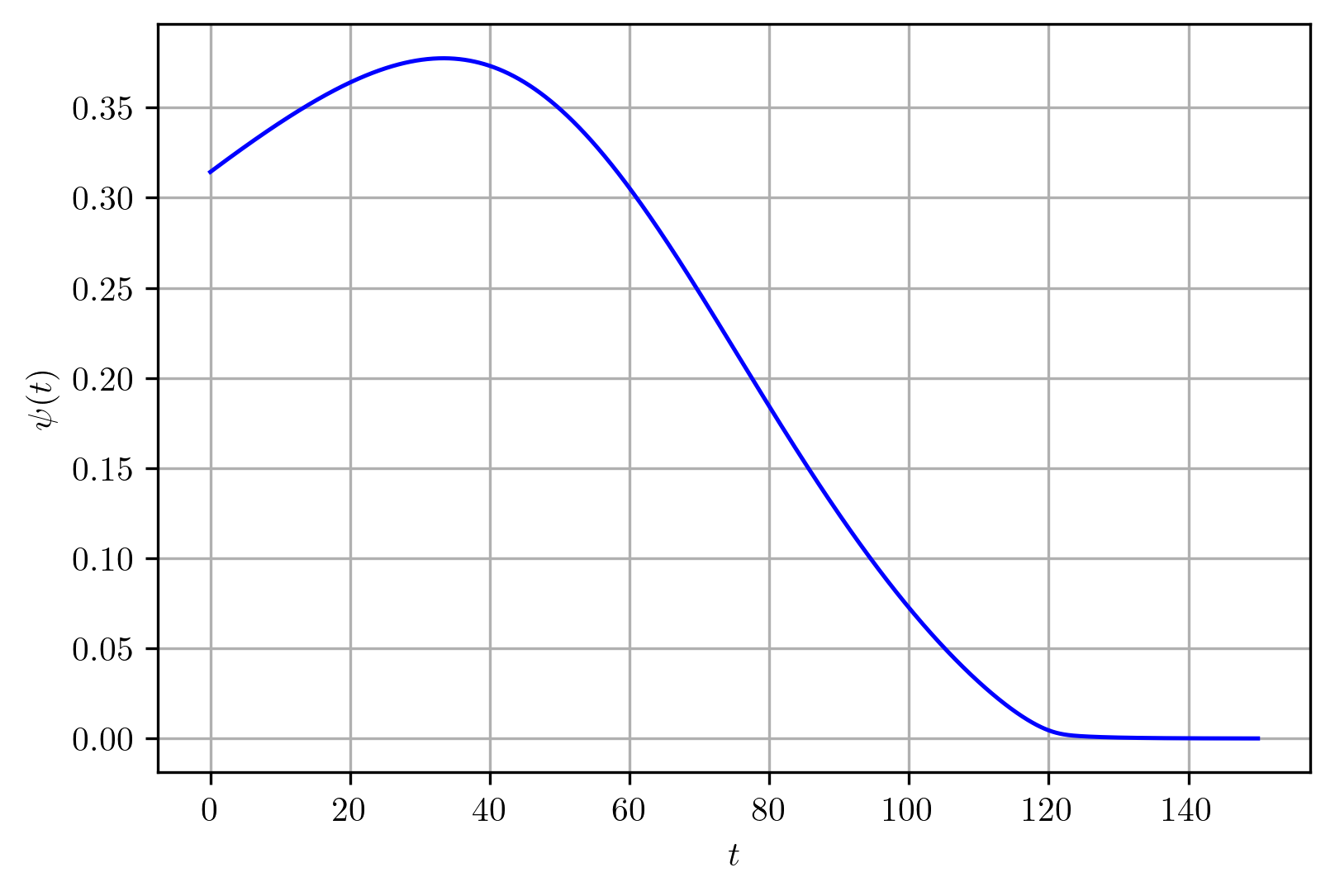} \\
	\end{tabular}
\end{figure}

\begin{figure}[h!tbp]
	\centering
	\caption{Evolution of density in the oil production model}\label{oil density}
	\begin{tabular}{cc}
       \includegraphics[width=0.45\textwidth]{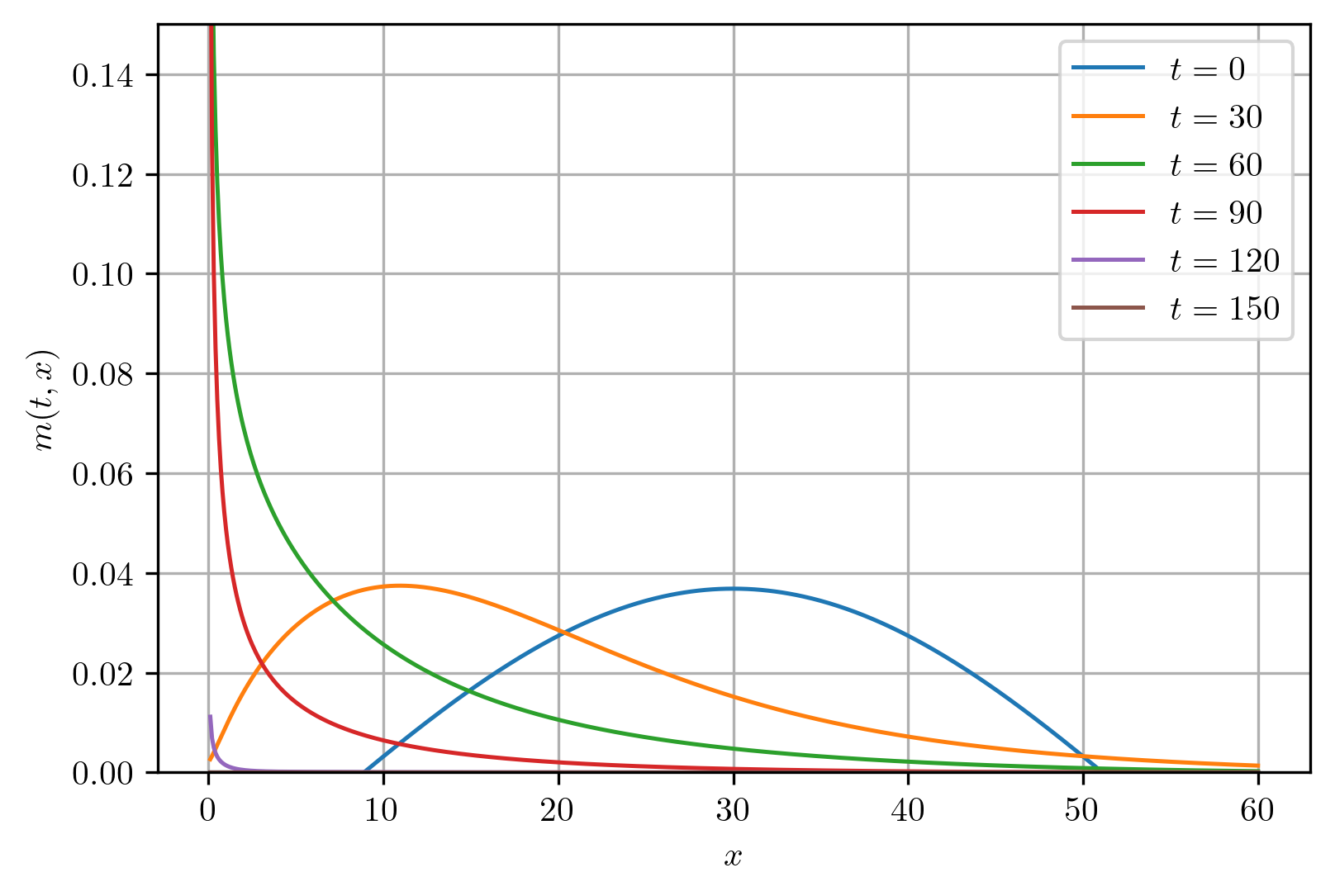} &
	\includegraphics[width=0.45\textwidth]{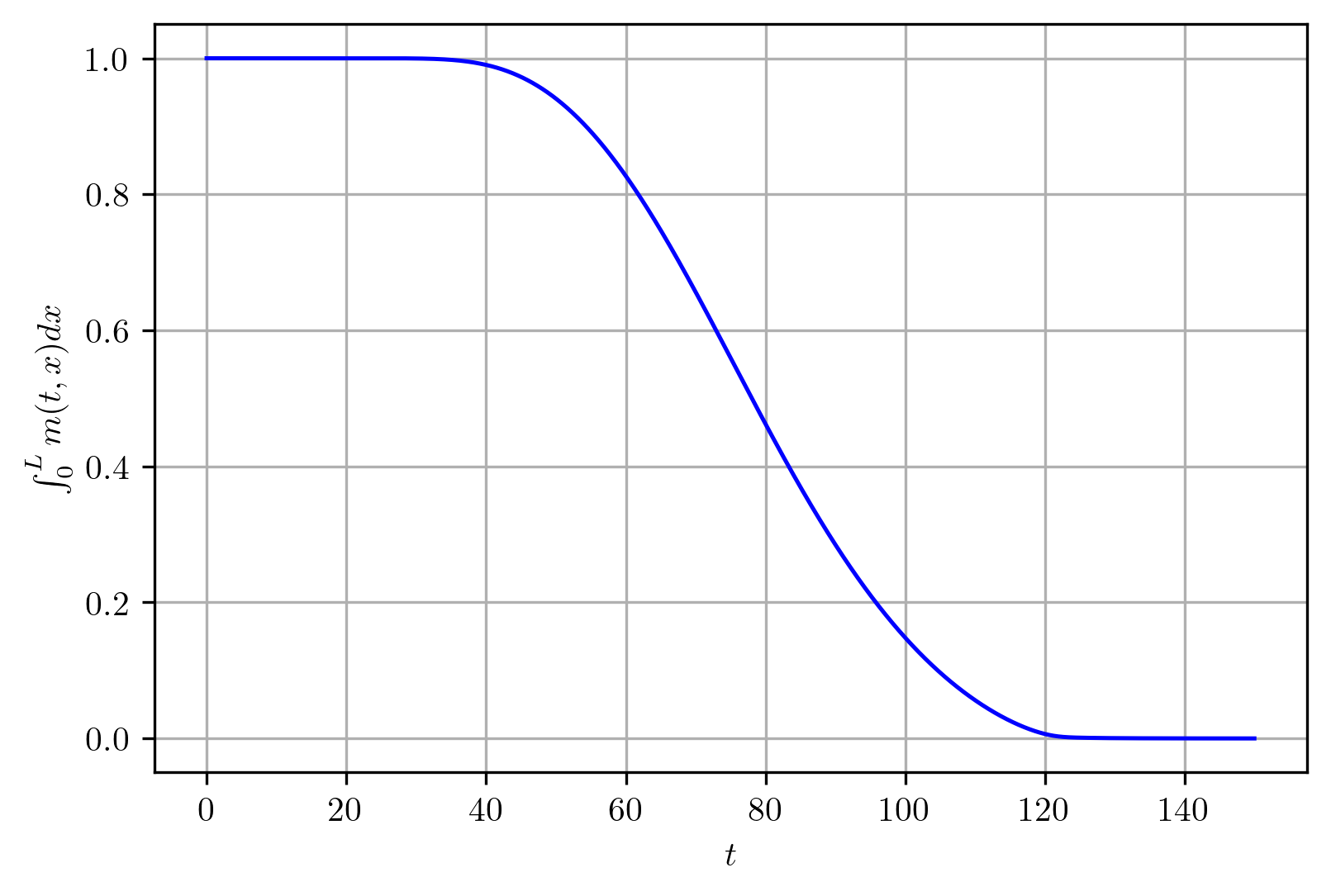} \\
	\end{tabular}
\end{figure}

\pagebreak
\noindent {\bf Acknowledgment.}
{The authors thank Jiahao Song for contributions to the numerical simulations section. We also thank two anonymous referees whose comments helped to improve the previous version of this work. }

\bibliographystyle{siam} 
\bibliography{cournotmfgrefs}

\end{document}